\newcommand\restr[2]{\ensuremath{\left.#1\right|_{#2}}}
\newtheorem{lemma}{Lemma}[section]
\newtheorem{theorem}[lemma]{Theorem}
\newtheorem{remark}[lemma]{Remark}
\def\RR{\rm \hbox{I\kern-.2em\hbox{R}}}
\def\NN{\rm \hbox{I\kern-.2em\hbox{N}}}
\def\ZZ{\rm {{\rm Z}\kern-.28em{\rm Z}}}
\def\CC{\rm \hbox{C\kern -.5em {\raise .32ex \hbox{$\scriptscriptstyle
|$}}\kern
-.22em{\raise .6ex \hbox{$\scriptscriptstyle |$}}\kern .4em}}
\def\<{\langle}
\def\>{\rangle}
\def\e{\varepsilon}
\def\cT{{\cal T}}
\def\cI{{\cal I}}
\def\cB{{\cal B}}
\def\cF{{\cal F}}
\def\cD{{\cal D}}
\def\cS{{\cal S}}
\def\cP{{\cal P}}
\def\cM{{\cal M}}
\def\Chi{\raise .3ex
\hbox{\large $\chi$}} 
\def\lsima{\hbox{\kern -.6em\raisebox{-1ex}{$~\stackrel{\textstyle<}{\sim}~$}}\kern -.4em}
\def\lsim{\hbox{\kern -.2em\raisebox{-1ex}{$~\stackrel{\textstyle<}{\sim}~$}}\kern -.2em}
\def\[{\Bigl [}
\def\]{\Bigr ]}
\def\({\Bigl (}
\def\){\Bigr )}
\def\[{\Bigl [}
\def\]{\Bigr ]}
\def\({\Bigl (}
\def\){\Bigr )}
\def\R{\mathbb{R}}
\def\T{{\relax\ifmmode I\!\!\hspace{-1pt}T\else$I\!\!\hspace{-1pt}T$\fi}}
\def\lsim{\raisebox{-1ex}{$~\stackrel{\textstyle<}{\sim}~$}}
  \def\NN{N}                  
\def\cZ{{\cal Z}}
\def\cU{{\cal U}}
\def\argmin{\mathop{\rm argmin}}
\def\cS{{\cal F}}
\def\bz{{\bf z}}
\def\cD{{\cal D}}
\def\cG{{\cal G}}
\def\cI{{\cal I}}
\def\cM{{\cal M}}
\def\cN{{\cal N}}
\def\cT{{\cal T}}
\def\cU{{\cal U}}
\def\cL{{\cal L}}
\def\cB{{\cal B}}
\def\cG{{\cal G}}
\def\cX{{\cal X}}
\def\cS{{\cal S}}
\def\cP{{\cal P}}
\def\cR{{\cal R}}
\def\cU{{\cal U}}
\def\bx{{\bf x}}
\def\bk{{\bf k}}
\def\bz{{\bf z}}
\def\argmin{\mathop{\rm argmin}}
\newcommand{\bh}{{\bf h}}
\newcommand{\by}{{\bf y}}
\newcommand{\be}{\begin{equation}}
\newcommand{\ee}{\end{equation}}
\newcommand{\bea}{$$ \begin{array}{lll}}
\newcommand{\eea}{\end{array} $$}
\newcommand{\beqn}{\begin{equation}}
\newcommand{\eeqn}{\end{equation}}
\newcommand\dist{\mathop{\rm dist}}
\newcommand\eref[1]{{\rm (\ref{#1})}}
\def\int{\intop\limits}
\newcommand\cH{{\cal H}}
\title{Convergence and error control of consistent PINNs for elliptic PDEs}
\author{Andrea Bonito, Ronald DeVore, Guergana Petrova, and Jonathan W. Siegel}
\date{V1: 06/13/24; V2:10/4/24}
\begin{document}

\maketitle
\begin{abstract}
    We provide an a priori analysis of a certain class of numerical methods, commonly referred to as {\it collocation  methods}, for solving elliptic boundary value problems.  They begin with information in the form of point values of the right side $f$ of such equations and point values of the boundary function $g$ and utilize only this information to numerically approximate the solution $u$ of the Partial Differential Equation. For such a method to provide an approximation to $u$ with guaranteed error bounds, additional assumptions on $f$ and $g$, called {\it model class assumptions}, are needed. We determine  the best error (in the energy norm) of approximating $u$, in terms of the total number of point samples,  under all Besov  class  model  assumptions  for the right hand side and boundary data.
    
    We then turn to the study of numerical procedures and analyze whether a proposed numerical procedure  (nearly) achieves the optimal recovery error.  In particular, we analyze numerical methods which generate
    the numerical approximation  to $u$ by minimizing 
 specified data driven loss
    functions  over a set $\Sigma$ which is either a finite dimensional linear space,  or more generally,  a finite dimensional manifold.   We show that the success of such a procedure depends critically on choosing a  data driven loss function that is consistent with the PDE and provides sharp error control.  Based on this analysis a  loss function $\cL^*$ is proposed.

We also address the recent methods of Physics Informed Neural Networks.
We prove  that 
     minimization of the new loss $\cL^*$ over restricted neural network spaces $\Sigma$ provides an optimal recovery  of the solution $u$, provided that the optimization problem can be numerically executed and   $\Sigma$ has  sufficient approximation capabilities. 
     We also analyze variants of $\cL^*$ which are more practical for implementation. 
     Finally, numerical examples illustrating the benefits of the proposed loss functions
     are given.
\end{abstract}

\section{Introduction}
\label{operator}
This paper is concerned with numerical methods for solving elliptic partial differential equations (PDEs).  Our primary goal is to provide a rigorous  analysis of rates of convergence for collocation methods, including   PINNs (Physics Informed Neural Networks), for solving such differential equations.

Let $\Omega$ be a bounded  domain  in the Euclidean space
 $\R^d$, $d \geq 2$,  and $\overline\Omega$ be its closure in $\R^d$.
Consider the elliptic Dirichlet problem
\begin{eqnarray}
    \label{main}
-\nabla\cdot (a({\bf x})\nabla u({\bf x}))=f({\bf x}),\quad {\bf x}\in \Omega,
\nonumber\\
u({\bf z})=g(\bz), \quad {\bf z}\in \partial \Omega,
\end{eqnarray}
where $\partial\Omega$ is the boundary of $\Omega$.  In order to prove the existence and uniqueness of a solution to \eref{main}, one needs to impose conditions on $f,g,a$,  and $\Omega$.  A  standard set of assumptions that guarantee the existence and uniqueness of a solution to  \eref{main} is  the following:
\vskip .1in
\noindent
{\bf A1:} $\Omega$ is a Lipschitz (graph) domain, i.e., $\Omega$ is an open connected set in $\R^d$ with a Lipschitz boundary in the sense described in \cite{Stein};
\vskip .1in
\noindent
{\bf A2:}  $ f\in H^{-1}(\Omega)$;
\vskip .1in
\noindent
{\bf A3:} $g\in H^{1/2}(\partial\Omega)$;
\vskip .1in
\noindent
{\bf A4:}
    the diffusion coefficient $a$ satisfies the coercivity condition
 \be 
 \label{diff}
 0<M_0\le a({\bf x})\le M,\quad {\bf x}\in \Omega,
 \ee 
  for some constants $M_0,M>0$.
Here the spaces $H^s(\Omega)$, $s\in\R$, are the Sobolev spaces of   order $s$ in $L_2(\Omega)$.   

Under the assumptions {\bf A1-A4}, the Lax-Milgram theorem (see e.g. \cite{gilbarg1977elliptic})   implies  that   \eref{main}
 has a unique solution $u\in H^1(\Omega)$ which satisfies
\be 
\label{perturb}
c\{\|f\|_{H^{-1}(\Omega)}+ \|g\|_{H^{1/2}(\partial\Omega)}\}\le \|u\|_{H^1(\Omega)}\le C\{\|f\|_{H^{-1}(\Omega)}+ \|g\|_{H^{1/2}(\partial\Omega)}\},
\ee
where the constants $c,C$ depend on $\Omega$, $M_0$ and $M$ \footnote{Throughout this paper, we use the notation $c,C,c_1,C_1$, etc, to denote constants. Unless they are absolute constants, we indicate the quantities on which they depend when
they occur.  The value of these constants can change at each occurrence. We also use the notation $A\asymp B$ to denote that $c_1A\leq B\leq c_2A$ with $c_1,c_2>0$.}.
Note that assumptions  {\bf A2-A3} could be replaced with other smoothness assumptions on $f$ and $g$, respectively, which would result in a theory similar to the one described in this paper.

In order to make our presentation as clear as possible, we want to avoid the technicalities of handling general domains and general diffusion coefficients $a$.
Accordingly, we restrict our presentation to the case $\Omega=(0,1)^d$ and  $a\equiv 1$.  This will allow us to concentrate on the novelties of PINNs and alternative collocation methods. 
We will comment on the extension of our results to more general domains $\Omega$ in  \S\ref{S:CR}.  

Note  that because of \eref{perturb}, we have that for every $v\in H^1(\Omega)$
\be
\label{equiv}
\|u-v\|_{H^1(\Omega)}\asymp \|f+\Delta v\|_{H^{-1}(\Omega)}+\|g-v\|_{H^{1/2}(\partial\Omega)},
\ee
 with absolute constants in the equivalence. 
Hence, the problem of finding the minimum of  the function $\cL_T$ 
(which we call theoretical loss function),
\be 
\label{Tloss}
\cL_T(v):=\|f+\Delta v \|_{H^{-1}(\Omega)}+ \|g-v\|_{H^{1/2}(\partial\Omega)},
 \ee
 over the whole of $H^1(\Omega)$ will have  $u$ as its unique solution, 
that is,
\be 
\label{firstmin}
u = \argmin_{v \in H^1(\Omega)} \cL_T(v).
\ee


\subsection{Numerical methods for solving \eref{main}}
\label{SS:numerical}
We are interested in numerical methods for solving \eref{main}.  The most prominent of these  are the Finite Element
Methods (FEMs) and the adaptive variations of these (AFEMs).  Over the last decades, a rigorous theory has been developed for FEMs and AFEMs which guarantees, a priori, the convergence of these numerical methods and even provides bounds on their rate of convergence under additional assumptions on $f$ and $g$.  These additional assumptions
stipulate extra smoothness of $f$ and $g$ than those assumed in {\bf A2-A3}.

A numerical method assumes that $f,g$, and $\Omega$ are known and suggests  a procedure for solving \eref{main}  based on that information.  The numerical  procedure generates a function $\hat u$ which is an approximation to $u$.
The efficiency of the numerical method is then evaluated in the following sense.  One chooses a norm $\|\cdot\|_X$
in which to measure performance and then seeks to quantify how fast the error $\|u-\hat u\|_X$ between the true solution $u$ and the output $\hat u$ of the numerical procedure  tends to zero in terms of  the complexity of the numerical procedure. The classical norm $\|\cdot\|_X$ used to measure error is the energy norm which corresponds to  choosing 
$X=H^1(\Omega)$.     Other function norms sometimes used to measure error correspond  to $X=L_p(\Omega)$, $1\le p\le \infty$.  We restrict our analysis of convergence and rates of convergence to the case $X=H^1(\Omega)$ in going further. We remark that our approach can in principle be applied to other error norms as well. In fact, the case of $X = H^{1/2}(\Omega)$ has been considered in a similar way in \cite{zeinhofer2023unified}.
 
An important question is how one can compare the performance of  different numerical methods for solving \eref{main}  in order  to give a fair competition between all possible numerical methods. This is typically done by assigning some form of complexity to the numerical method. In the case of (A)FEMs, this complexity can be measured in terms of the number $n$ of machine operations used to compute $\hat u_n=\hat u$,  and there  are theorems that
give a priori bounds for the error $\|u-\hat u_n\|_{H^1(\Omega)}$ under additional assumptions on the smoothness of $f$ and $g$ (see e.g. \cite{bonito2024adaptive}).
    Some alternatives to measuring the complexity $n$ in terms of machine  operations are   the following.  If the numerical approximation $\hat u$ to $u$ is  an element from a linear $n$-dimensional space $V_n$, then
the associated complexity is typically  assigned to be  the dimension $n$. 
This may not be directly converted to an appropriate number of machine operations because numerical stability becomes an issue. 
If the approximation $\hat u$ comes from a nonlinear manifold $\cM_n$, then one can use the number of parameters  $n$  determining  the manifold as a complexity measure.  However, in the latter  case, one has to impose extra conditions on the manifold (or the numerical procedure)  to prevent the use of space filling manifolds and thus avoid unstable numerical methods, see \cite{cohen2022optimal}.  In the absence of such restrictions, the numerical method may require an inordinate amount of computational resources to achieve a desired accuracy even in the case when the dimension $n$ is small. 

Another  issue to consider  is  in what sense  $f,g,\Omega$ are known/given to us.  One setting is to assume that we can query (ask questions about) $f,g,\Omega$,  and receive the answer to such queries. In this setting,  one has to decide which type of questions are allowed and whether these queries are answered free of charge or should the cost of asking/answering such queries be included in the numerical cost of the algorithm.    For example, in FEMs, the data used  are certain linear functionals  (inner products with the basis functions of the
Galerkin space) which are then utilized in  the FEM approximation.

In this paper, we are interested in collocation methods as described in the next section.
Accordingly, the linear functionals we 
consider are point evaluations of $f$ and $g$ at points from $\overline\Omega$.  In order for these to make sense, we assume that
$f$ and $g$ are continuous functions, i.e., $f\in C(\overline \Omega)$, $g\in C(\partial\Omega)$. 
The accuracy of how well $\hat u$ approximates $u$ will depend on two  components. The first is the number $m$ of point evaluations and their positioning.  The optimal accuracy that any numerical method can achieve from these $m$ pieces of information under the model class assumption on the function is called
the {\it optimal recovery rate}.  A central question in this paper is to determine
this optimal recovery rate under various 
model class assumptions, provided the $m$ data sites are optimally positioned.
This is the subject of \S\ref{S:OR}.

A second item in collocation methods is how one numerically uses the 
$m$ pieces of
information 
provided to create a numerical approximation to $u$.
If $\Sigma_n$ is  the underlying space   used to create $\hat u$  then
the accuracy of the numerical recovery will depend on $n$ and the choice of $\Sigma_n$. In other words, $\hat u$
can be viewed  as $\hat u=\hat u_{n,m}$.  If one fixes $m$, then one can study
how fast the accuracy of approximation $\|u-\hat u_{n,m}\|$ tends to the optimal recovery rate as $n\to\infty$.  This issue is discussed in \S\ref{S:NOR}. We remark that when neural network based methods such as PINNs are used to perform this numerical approximation, a critical component of the numerical algorithm becomes the optimization method used to train the neural network. This issue, which is specific to neural networks, is addressed in detail in \S \ref{numerical-section}.

In going further, we consider problem \eref{main} under the following assumptions:
\vskip .1in
\noindent
{\bf B1:} $\Omega=(0,1)^d$, $d\geq 2$;
\vskip .1in
\noindent
{\bf B2:}  $ f\in C(\overline\Omega)\subset H^{-1}(\Omega)$;
\vskip .1in
\noindent
{\bf B3:} $g\in H^{1/2}(\partial\Omega)\cap C(\partial \Omega)$;
\vskip .1in
\noindent
{\bf B4:}
$a\equiv 1$.
 
   Concerning the sense in which we know $\Omega$, one usually  considers  polyhedral domains whose vertices and edges are given to us.  As already noted, for simplicity, we assume that $\Omega=(0,1)^d$.  However, with some additional technicality, we could equally well start with polyhedral domain. The standard way of handling more general domains $\Omega$  is to first find a polyhedral domain $\widehat \Omega$ that approximates $\Omega$, solve the corresponding PDE on $\widehat\Omega$, and then analyze the effect of the approximation of $\Omega$ by $\widehat \Omega$.

\subsection{Collocation methods and PINNs}
\label{SS:PINN}

Recently, there has been  significant  interest in using neural networks (NNs) as a nonlinear manifold to generate the approximation  $\hat u$ to the solution $u$ of \eref{main}.  Let $\Sigma_n$ denote the set of outputs of a   neural network  with $n$ parameters  and fixed architecture.     Once $\Sigma_n$ is chosen,  the numerical procedure queries $f$ and $g$ and then uses this information to numerically construct a function $\hat u\in \Sigma_n$  which  serves as an approximation to $u$ in a specified norm $\|\cdot\|_X$.  The queries of $f$ and $g$ are  taken as point evaluations
at specified points from $\overline\Omega$,  thereby providing the values
\be\label{query}
{\bf f}=(f_1,\ldots,f_{\widetilde m}), \quad f_i:=f({\bf x}_i),\quad i=1,\dots,\widetilde m; \quad {\bf g}=(g_1,\ldots,g_{{\overline m}}), \qquad g_i:=g({\bf z}_i),\quad i=1,\dots,\overline m,
\ee 
   where   ${\bf x}_i\in \overline\Omega$, $i=1,\dots,\widetilde m$, and    ${\bf z}_i\in \partial \Omega$, $i=1,\dots,\overline m$,  are the query sites.  We refer to these points as {\it data sites} and denote  them  by 
   $$\cX:=\{\bx_1,\dots,\bx_{\widetilde m}\}, \quad  \cZ:=\{\bz_1,\dots,\bz_{\overline m}\}.
   $$  
 The performance of such a numerical method will depend on the numbers $\widetilde m,\overline m$, which we refer to as the {\it query budget},
 and  also on the positioning of these points. 

 In PINNs, the numerical procedure to find $\hat u$ is to search over $  \Sigma_n$ and find a $\hat u\in \Sigma_n$ which
  \lq fits the data\rq.  The most frequently used procedure (there are many variants) is to take  $\hat u$
 as one of the elements of the set 
 \be 
 \label{loss1}
 \hat u\, \in \,\argmin_{S\in\Sigma_n}\cL(S), 
 \ee
 where 
 \be\label{square-root-loss-definition}
    \cL(S):=\left[\frac{1}{\widetilde m}\sum_{i=1}^{\widetilde m} [\Delta S (\bx_i)+f(\bx_i)]^2\right]^{1/2}+\left[\frac{1}{\overline{m}} \sum_{i=1}^{\overline m} [S(\bz_i)-g(\bz_i)]^2\right]^{1/2}.
 \ee
 Note that this is equivalent to the square root of the original PINNs loss \cite{raissi2019physics}. Later in the paper we will consider weighted versions of $\cL$.
 The appropriateness of the loss $\cL$ and how $\widetilde m,\overline m$, and the data sites 
 $(\cX,\cZ)$ should be chosen are important  issues and  are one of the  focal points of  this paper. 
We remark that an analysis of other methodologies for solving \eqref{main} using neural networks, such as the deep Ritz method \cite{weinan2018deep} and finite neuron method \cite{xu2020finite}, can be found in \cite{lu2021priori,lu2022priori,siegel2023greedy,muller2022error,duan2022convergence,dondl2022uniform}.
 
The  PINNs numerical procedures fall into the broad class of {\it collocation methods}  for solving PDEs, i.e. numerical methods that use only values of $f$ and $g$ at specified data sites $(\cX,\cZ)$.  Such methods were studied in the last century (see e.g. \cite{AS,HRV,HHRV}) but became less popular with the advent of FEMs.   The novelty of PINNs is to use neural networks instead of  polynomials or splines  to build the approximation $\hat u$ from the  given data.  

Some assumptions are necessary for collocation methods to make sense.  Firstly, to ensure that point values of $f$  and $g$
 make sense, we assume that   $f$ and $g$  are continuous function on $\overline\Omega$ and $\partial\Omega$, respectively.  Secondly, to have any hope of proving a priori guarantees on convergence or rates of convergence
as $\widetilde m,\overline m,n\to\infty$, we   need at a minimum that $f$ comes from  a compact subset $\cF$ of $C(\overline{\Omega})$ and $g$ comes from  a compact subset
$\cG$ of $C(\partial \Omega)$.  This  in turn guarantees that $u$ is in a compact subset $\cU$ of $H^1(\Omega)$.

\subsection{Overview of this paper}
\label{SS:overview}
The idea of using neural networks to solve PDEs goes back to the last century \cite{dissanayake1994neural,lagaris1998artificial}, and has recently become extremely popular with the introduction of PINNs \cite{raissi2019physics}. Despite the increasing  usage of PINNs for numerically solving a wide range  of PDEs  (see for example,  \cite{karniadakis2021physics,cuomo2022scientific,cai2021physics,mao2020physics} and the references therein),
a satisfactory analysis of the convergence and performance of these methods has not yet been given. We refer to \cite{de2024numerical} and the references therein for an overview of the existing approaches. 
Some partial progress towards this goal has recently been made, see for instance \cite{gazoulis2023stability,mishra2023estimates,SDK,shin2023error,zeinhofer2023unified}.

For example, in \cite{SDK}, convergence is proved in the $C(\overline{\Omega})$-norm for a modified physics-informed neural network for elliptic PDEs under the assumption that the right hand side $f$ and boundary data $g$ are H\"older continuous. Under the additional assumption that the outputs of the neural network satisfy the boundary conditions exactly, convergence is obtained in $H^1(\Omega)$. In this situation, the boundary values are \textit{not enforced} via the loss function and must instead be  implemented  through the neural network architecture. We remark that although methods for exactly enforcing boundary conditions with neural networks have been proposed in \cite{lagaris1998artificial,lagaris2000neural,berg2018unified}, this approach is unable to rigorously handle arbitrary boundary conditions on general domains.

Further, in \cite{shin2023error} and \cite{zeinhofer2023unified}, an abstract framework for analyzing methods based upon residual minimization, such as PINNs, is presented. This framework has been used in a variety of follow up works to analyze PINNs (see \cite{bai2021physics,tang2023pinns,doumeche2023convergence} for instance). 

An investigation of the PINNs method  for solving a variety of equations, obtaining upper bounds on the rate of convergence with respect to the number of collocation points, has been obtained in \cite{mishra2023estimates}. In \cite{lu2021machine},  a mini-max analysis for various neural network PDE formulations (with built in zero boundary conditions) is given for the Poisson equation \textit{with zero boundary values} under the assumption that noisy values of  the right hand side of the PDE are measured at random points.

Our work differs from the previous approaches  in the following important ways:
\begin{itemize}
    \item  we consider general Sobolev and Besov smoothness assumptions on the RHS and boundary data (instead of just the Hilbert space setting as is typical in prior work) and consider error in the energy norm $H^1(\Omega)$ (in constrast to the weaker norms often considered in prior work); Besov and Sobolev regularity is the  standard staple of formulating regularity theorems for PDEs;  
    \item we obtain matching upper and lower bounds on the theoretically optimal convergence rate in terms of the number of collocations points;
    \item we show that a properly designed loss function can be used to obtain an a posteriori error estimate on the solution error for the elliptic PDE;
    \item based upon our  analysis, we propose new loss functions,  including a loss function  which correctly balances the boundary and domain error in the original PINNs formulation, and numerically demonstrate their improved performance on problems with (relatively) low regularity.
\end{itemize}
In particular, our analysis concentrates on the following questions:

\vskip .1in
\noindent 
{\bf Q1:}  Given the model class assumptions on $f$ and $g$, which query sites should be used and how large would we have to take $\widetilde m$ and $\overline m$ to 
 be able to reach a target accuracy $\varepsilon$ for the error   $\|u-\hat u\|_{H^1(\Omega)}$?
\vskip .1in
\noindent 
{\bf Q2:}  Given a budget of $m$ queries and given model class assumption on $f$ and $g$, what is the smallest error that can be achieved in the recovery of $u$?
This is called the {\it optimal recovery} accuracy.
\vskip .1in
\noindent 
{\bf Q3:}  If we use a NN space $\Sigma_n$ with $n$  parameters to build the approximation $\hat u$ to $u$, then  how large do we need to choose $n$ to achieve near optimal accuracy?  
\vskip .1in
\noindent
{\bf Q4:} If we use minimization of a data driven loss function to generate the numerical approximation $\hat u$, then what form should the loss function take  so  that small values of this loss function guarantee small values of the solution approximation error, up to the optimal recovery rate
?
\vskip .1in

We answer the above questions for all Besov space model classes and thereby establish such  an a priori analysis of collocation methods and in particular PINNs. In contrast to previous works, we consider the error in the $H^1(\Omega)$-norm, and treat the situation where the boundary values are enforced through the loss function. 
We assume that 
 $f\in\cF$, $g\in\cG$, where $\cF$ and $\cG$ are compact subsets of the space of continuous functions on the domains 
$\overline{\Omega}$ and $\partial \Omega$, respectively.  Such compact sets are typically described by smoothness
conditions.  In this paper, we measure smoothness   by membership in Besov classes.  For this reason, we begin in \S \ref{S:Besov} with the definition of Besov spaces and the properties we will need in the sequel.  We also
discuss there how well functions in Besov classes can be approximated (in various norms) by piecewise polynomials.  All of the results in that section are known and are therefore presented in a  summary form.  However, for completeness of the paper, and for the specific presentation of these results,  we give proofs of the results we need in the appendix.

We use membership in Besov spaces $B_q^s(L_p(\Omega))$ as the smoothness we impose on $f$ and $g$. 
 Once   such assumptions are placed on $f$ and $g$,  we  answer  questions
 {\bf Q1-Q2} in  \S\ref{S:OR}.
 Questions of this type are commonly referred to as {\it optimal recovery} (OR) of a function $u$ from   data. Since our data is given by point samples, this is also known as {\it optimal sampling}.  Our general setting is not  usually  addressed in the OR literature because our data is not point evaluations of the target function $u$, but rather of $f$ and $g$. Nevertheless, we consider the content  of \S\ref{S:OR} to be for the most part known in the sense that we are simply piecing together various known results and techniques such as those in 
     \cite{DNSI,dahlke2006optimal,KNS, NT}. 
     The paper \cite{vybiral2007sampling}, which studies optimal sampling with respect to the negative Sobolev norm $H^{-1}(\Omega)$, comes closest to presenting the optimal recovery results in the form we need.
     However, note that the cases $0<p,q<1$ for all $d$, or $p=1$, 
     $0<q\leq\infty$, $d=2$ are not covered in \cite{vybiral2007sampling}. Notably, we observe that the optimal sampling rates for Besov spaces in $H^{-1}(\Omega)$ coincide with the optimal sampling rates in a certain space $L_\gamma(\Omega)$, determined by the Sobolev embedding (see Theorem \ref{T:ORf} (iv)). 
 
   The results of \S \ref{S:OR} do not give a numerical method for finding $\hat u$ from the given data that provides the error bound $\|u-\hat u\|_X\le \e$.
 They simply establish the theoretical benchmark for the optimal  accuracy of  any numerical procedure for the recovery of $u$ based on the point samples of  $f$ and $g$, see the discussion at the beginning of \S\ref{S:NOR2}. 
 Our   analysis there  does not involve
 NNs or any other proposed collocation method.  NNs and PINNs   only enter the picture when one wants to
 transform the theoretical analysis into a numerical procedure that utilizes NNs to find an approximant to $u$.
We then further discuss in \S\ref{S:NOR2}   the use of classical numerical methods such as Finite Element Methods (FEMs) for optimal recovery.

In \S\ref{S:NOpt},  we turn to the question of using optimization to solve the PDE.
 We observe   that minimizing the theoretical  loss $\cL_T$ over a space $\Sigma_n$ will approximate well $u$, provided $\Sigma_n$ has sufficient approximation power.   However, this is not a numerical algorithm per se because it does not provide a numerical 
recipe for evaluating $\cL_T(S)$ when $S\in\Sigma_n$.

The remainder of this paper discusses possible numerical methods to achieve the optimal recovery rate,  i.e., to solve the PDE to the highest accuracy possible given only the information $({\bf f},{\bf g})$ (see \eqref{query}). 
 An important consequence of the analysis in \S\ref{S:NOpt}  is   that
the widely used  loss function $\cL$ may not be an appropriate discretization of the theoretical loss $\cL_T$. We introduce the loss function $\cL^*:H^1(\Omega)\to\R$ given by
 \be
 \label{correctloss}
 \cL^*(v)\!:= \!\!\left [\frac{1}{\widetilde m}\sum_{i=1}^{\widetilde m} |f(\bx_i)+\Delta v(\bx_i)|^\gamma \right ]^{1/\gamma}\!\!\!\!+
 \left [\frac{1}{\overline m^2}\sum_{\substack{i,j=1 \\ i\neq j}}^{\overline m}
 \frac{|[g-v](\bz_i)-[g-v](\bz_j)|^2}{|\bz_i-\bz_j|^d}\right ]^{1/2}\!\!\!  +\left[\frac{1}{\overline m}\sum_{j=1}^{\overline m}|g(\bz_j)-v(\bz_j)|^2\right]^{1/2},
 \ee
 and show that it correctly discretizes an upper bound on the theoretical loss $\cL_T$.
 Here $\gamma$ is the smallest number 
 (if this is possible) so that $L_\gamma(\Omega)$ embeds in $H^{-1}(\Omega)$. For example, $\gamma=\frac{2d}{d+2}$ in the case $d\geq 3$. The case $d=2$ is more complicated and we will discuss it later in the paper.
 
The  loss $\cL^*$ and its properties are given in \S\ref{S:NOpt}, \S\ref{S:discrete} and 
 \S\ref{S:dloss}.  A large component of its development centers on how to discretize $L_p$  (quasi-)norms and the $H^{1/2}$ norm.  Our analysis on this subject may
 be useful in the development of other methods for solving PDEs numerically.

 The first term    in the loss $\cL^*$ are shown to bound $\|f+\Delta v\|_{H^{-1}(\Omega)}$ and the second   and third terms are shown to bound $\|g-v\|_{H^{1/2}(\partial\Omega)}$.  These in turn give a bound for $\|u-v\|_{H^1(\Omega)}$ via
 \eref{equiv}.
 We go on to show how  near minimizers of the suggested loss $\cL^*$  give  an  a priori bound of the approximation error,  provided that the approximation method $\Sigma_n$ has suitable approximation properties. The a priori guarantee provides a near optimal recovery whenever $\Sigma_n$ provides sufficiently good approximation of the elements in the solution model class $\cU$.

 In \S\ref{S:implications}, we 
 analyze a variety of other possible loss functions which provide the same a posteriori error guarantees as the loss $\cL^*$. We call the use of any of these loss functions \emph{consistent PINNs}. 
 If the set $\Sigma_n$ is chosen large enough, all these losses will provide a near best optimal recovery (in terms of number of data points) of the solution $u$. 
We  prove that for any function $v\in H^1(\Omega)$, the losses suggested in \S\ref{S:implications} always provide an upper bound of the true error $\|u-v\|_{H^1(\Omega)}$  up to the optimal recovery rate for the solution model class $\cU$, provided that the approximant $v$ can be controlled in the $\|\cdot\|_{\cU}$-norm (see Theorem \ref{T:dloss}).   This means that these losses can be used as an a posteriori error estimator for any proposed numerical approximation $v$ to $u$. This error estimate can be used to check whether the output of a PINNs optimization achieves the desired accuracy $\e$. 

However, proving that a particular numerical method of optimization (such as gradient descent)
converges to a minimizer (or near minimizer) of the loss is a serious issue in optimization, which we do not
address here.

 We next develop in \S\ref{S:NOR} the  properties needed of a set  $\Sigma_n$ so that minimizing the loss $\cL^*$ over   $\Sigma_n$  results in a near optimal recovery of $u$.   
 It turns out that a certain restricted approximation property, guaranteed to hold
 when $\Sigma_n$ is sufficiently good at approximating the elements of solution model class $\cU$, is sufficient.
 In particular, we show that this is the case for suitable NN spaces $\Sigma_n$, provided $n$ is large  enough.

Finally, in \S\ref{numerical-section}, we numerically test the loss functions described in \S\ref{S:implications} on a two-dimensional Poisson equation. These results show that using consistent PINNs provides a significant improvement in the solution error, especially for problems with low regularity.

\subsection*{Acknowledgments}
We are grateful to Peter Binev, Albert Cohen, Wolfgang Dahmen, and Jinchao Xu for many insightful conversations about the material in this paper. We also thank the referees for their careful reading of the manuscript  and their insightful comments/suggestions which helped improve the paper.

This research was supported by the NSF Grants
DMS-2409807 (AB), DMS-2424305 (JWS), CCF-2205004 (JWS), and DMS 2134077 (RD and GP), and the MURI ONR Grant N00014-20-1-278 (RD, GP, and JWS).

\section{Besov spaces}
\label{S:Besov}
  
We start this section by recalling the definition of Besov spaces 
and  their properties.   
For the range  $s>0$, and $0<p,q\le\infty$,  the Besov space $B_q^s(L_p(\Omega))$, $\Omega=(0,1)^d$, $d\ge 2$,  is a space of functions with smoothness of order $s>0$ in $L_p(\Omega)$.  Here $q $
is a secondary index that gives a fine gradation of these spaces.    
The material in this section is taken for the most part from the papers \cite{DS,DP,DSmono} and the reader will have to refer
to those papers for some of the definitions and proofs.  Let us also mention that the univariate
case is covered in the book \cite{DL}.

If $r$ is a positive integer, $0<p\le \infty $ and $f\in L_p(\Omega)$, we define the modulus of smoothness $\omega_r(f,\cdot)_{p}$ of $f$ by
\be 
\label{moduli}
\omega_r(f,t)_{p}:=\omega_r(f,t,\Omega)_{p}:= \sup_{|{\rm \bf h}|\le t} \|\Delta_{\rm \bf h}^r(f,\cdot)\|_{L_p(\Omega_{r{\rm \bf h}})}, \quad t>0,
\ee
where 
\be 
\label{rdiff}
\Delta_{\rm \bf h}^r(f,\cdot):=(-1)^r\sum_{k=0}^r (-1)^k\binom{r}{k} f(\cdot+k{\rm \bf h}),
\ee 
is the $r$-th difference of $f$ for ${\rm \bf h}\in\R^d$ and $\Omega_{\rm \bf h}:=\{{\rm \bf x}\in\Omega: [{\rm \bf x},{\rm \bf x}+{\rm \bf h}]\subset \Omega\}$.  Here
$[{\rm \bf x},{\rm \bf x}+{\rm \bf h}]$, 
${\rm \bf x},{\rm \bf h}\in\R^d$, denotes the line segment in $\R^d$ between ${\rm \bf x}$ and ${\rm \bf x}+{\rm \bf h}$,  and $|{\rm \bf h}|$ denotes the Euclidean norm of ${\rm\bf h}$. If $s>0$ and $0<p\le  \infty$, then
$B_q^s(L_p(\Omega))$ is defined as the set of all functions in $L_p(\Omega)$ for which
\be 
\label{Bsemi}
|f|_{B_q^s(L_p(\Omega))}:= \left [\int_0^1 [t^{-s}\omega_r(f,t)_p]^q\frac{dt}{t}\right ]^{1/q}<\infty,\quad 0<q<\infty,
\ee 
where $r$ can be taken as any integer strictly bigger than $s$. When $q=\infty$ we replace the integral by a supremum in the definition. This is a (quasi-)semi-norm and we obtain the (quasi-)norm for $B_q^s(L_p(\Omega))$
by adding $\|f\|_{L_p(\Omega)}$ to it. An equivalent (quasi-)semi-norm is given by
\be 
\label{Bsemi1}
|f|_{B_q^s(L_p(\Omega))}\asymp \left [\sum_{k=0}^\infty [2^{ks}\omega_r(f,2^{-k})_p]^q \right ]^{1/q},\quad 0<q<\infty,
\ee 
with equivalency constants 
independent of $f$.
This is proved by discretizing the integral in \eref{Bsemi} and using the monotonicity of $\omega_r$
as a function of $t$.  When $q=\infty$, \eref{Bsemi1} uses the $\ell_\infty$ norm in place of the $\ell_q$ norm, i.e.,
\be
\label{Bsemi11}
|f|_{B_\infty^s(L_p(\Omega))}\asymp \sup_{k\ge 0}   2^{ks}\omega_r(f,2^{-k})_p.
\ee
While different choices of $r$ result in different (quasi-)semi-norms, the corresponding Besov (quasi-)norms are equivalent, provided $r>s$.
To fix matters, we define the Besov norm with the value of $r=r(s)$ as the smallest integer strictly larger than $\max\{s,1\}$.  It follows that $r(s)$ is always larger than   or equal to  $2$.  This choice is for notational convenience in the material that follows.

Let us make some remarks on these spaces. Consider the  role of $q$ in this definition.  If $q_2<q_1$, then $B_{q_2}^s(L_p(\Omega))\subset B_{q_1}^s(L_p(\Omega)) $, i.e., these spaces get smaller as $q$ gets smaller.  Thus, all 
$B_q^s(L_p(\Omega))$, $q>0$, are contained in $B_\infty^s(L_p(\Omega))$ once $s$ and $p$ are fixed. The effect of $q$ in the definition of the Besov spaces is subtle. We remark that when $p=q$ and $s$ is not an integer, the space $B^s_p(L_p(\Omega))$ is equivalent to the Sobolev space $W^s(L_p(\Omega))$ (see \cite{adams2003sobolev}).  

In this paper, the space $B_\infty^s(L_p(\Omega))$  is especially important, and accordingly, we use the abbreviated
notation
\be 
\label{Hsmoothness}
B_p^s:=B_p^s(\Omega):=B_\infty^s(L_p(\Omega))
\ee 
in going forward.  It follows that a function $f\in B_p^s(\Omega)$ if and only if
\be 
\label{member}
\omega_r(f,t)_{p}\le Mt^s,\quad t>0,
\ee
and the smallest $M$ for which \eref{member} is valid is the (quasi-)semi-norm $|f|_{B_p^s}$.
This space is commonly referred to as (generalized)   H\"older smoothness of order $s$ in $L_p$. It is important to note that when $s$ is a positive  integer, we take $r>s$ in its definition. Therefore the Besov space $B_p^s$ is not a Lipschitz space when $s$ is a positive integer.  In view of \eref{Bsemi11}, we have that
 a function $f$ is in $B_p^s$ if and only if
\be 
\label{member1}
\omega_r(f,2^{-k})_{L_p(\Omega)}\le M'2^{-ks},\quad k=0,1,\dots,
\ee
and the smallest $M'$ for which this is true is an equivalent (quasi-)semi-norm.

\subsection {The Sobolev spaces $H^s(\Omega)$}
\label{SS:Sobolevspaces}
Since the results of this paper heavily use the Sobolev spaces $H^s(\Omega)$, we give here a short review of these spaces  and list some of their properties. The classical Sobolev  spaces $H^s(\Omega)$, $s>0$,  are the Besov spaces 
$$
H^s(\Omega)=B^s_2(L_2(\Omega))\subset 
B^s_\infty(L_2(\Omega))=:B^s_2(\Omega).
$$
 
Two of the $H^s(\Omega)$ spaces play an important role in what follows, namely $H^{-1}(\Omega)$ and $H^{1/2}(\partial \Omega)$.  Let us recall  the definition of $H^{-1}(\Omega)$.
 The space $H_0^1(\Omega)$ is the collection  of functions in $H^1(\Omega)$ which vanish on the boundary of $\Omega$, i.e., it is the closure of smooth functions in $H^1(\Omega)$ which are identically zero on $\partial\Omega$. The space $H^{-1}(\Omega)$ is by definition the dual space of $H_0^1(\Omega)$ and is equipped with the norm
\be 
\label{H-1norm}
\|f\|_{H^{-1}(\Omega)}:=\sup_{\|v\|_{H_0^1(\Omega)}=1}\langle f,v\rangle_{H^{-1}\times H_0^1}.
\ee

There are two equivalent ways to describe the space $H^{1/2}(\partial \Omega)$.  The first is as the Besov space $B_2^{1/2}(L_2(\partial\Omega))$.  Here, one needs the concept of Besov spaces on manifolds.  In our case, the boundary of $\Omega$ is quite simple since it is
the union of the faces of $\Omega$.  This definition gives the norm in $H^{1/2}(\partial\Omega)$ through the modulus of smoothness.
 Using the averaged modulus of smoothness, see  \eref{avemod} in the Appendix, \S\ref{SS:polyapprox}, one finds
that the semi-norm of $g\in H^{1/2}(\partial\Omega)$,  is
\be 
\label{inorm}
|g|^2_{H^{1/2}(\partial\Omega)}:=\int_{\partial\Omega}\int_{\partial\Omega}\frac{|g(\bz)-g(\bz')|^2}{|\bz-\bz'|^d}\, d\bz\, d\bz',
\ee
which is commonly referred to as the {\it intrinsic semi-norm} on this space.  We obtain the norm on this space by adding $\|g\|_{L_2(\partial\Omega)}$ to this semi-norm.

The second way to describe $H^{1/2}(\partial\Omega)$ and its norm is through the trace operator $Tr=Tr_{\partial\Omega}$.  Recall that if $g$ is a continuous function on $\overline \Omega$ then the trace $Tr(g)$  is simply the restriction of $g$ to $\partial\Omega$. While the trace operator has an extension to certain functions that are not continuous, in our applications that appear later the function $g$ will always be continuous.  Using the trace operator one defines $H^{1/2}(\partial\Omega)$ as the trace of functions in $H^1(\Omega)$ and its {\it trace norm} is
\be 
\label{tH1/2}
\|g\|_{H^{1/2}(\partial \Omega)}:= \inf\{\|v\|_{H^1(\Omega)}:\ Tr(v)=g\}.
\ee 
It is well known that the trace norm and the intrinsic norms for $H^{1/2}(\Omega)$ are equivalent for Lipschitz domains and in particular for our case of $\Omega=(0,1)^d$ (see \cite{grisvard2011elliptic}). We use both of these norms in what follows while always making clear which norm is being employed.

\subsection{ Piecewise polynomial approximation and interpolation}
\label{SS:pwpapprox}

Recall that one can characterize membership in Besov spaces by piecewise polynomial approximation. We describe and prove such characterizations in the appendix.   An important additional fact is that the piecewise polynomials in such characterizations can be described by {\it interpolation}.   This allows us to generate near best piecewise polynomial approximations   to $f$ and $g$ by using   only the data $(f_i)$, respectively $(g_i)$.   

Let us begin with the cube $\Omega=(0,1)^d$ and discuss polynomial interpolation on $\overline\Omega$ which  
we will later rescale to any dyadic cube.   We fix  $r\in\mathbb N$,  $r> 1$ and let 
$$
G_r:=\left\{
\left(\frac{j_1}{r-1}, \ldots,\frac{j_d}{r-1}\right),\ j_i\in\{0,1,\ldots,r-1\}\right\}\subset [0,1]^d,
$$ be the tensor product grid
of  $r^d$ equally spaced points in $\overline\Omega$ . 
Consider the simplicial (Kuhn-Tucker)  decomposition
of $\overline\Omega$ into simplices $T^0$, 
\be
\label{simp}
\overline\Omega=\bigcup\overline T^0.
\ee
 Given any one of these simplices $T^0$, the number $n_r $ of grid points from $G_r$ in the simplex $\overline {T^0}$ is the same as the dimension  of $\cP_r:=\cP^d_r$, 
 where $\cP^d_r$ denotes the linear space of algebraic polynomials of order $r$ (total degree $r-1$), namely, 
$$
\cP^d_r:=\left\{\sum_{|{\bk}|_1<r}a_{\bk}\bx^{\bk}, \,\,a_{\bk}\in\R\right\}, 
\quad \hbox{where}\quad \bx^{\bk}:=x_1^{k_1}\cdots x_d^{k_d}, \quad \bk:=(k_1,\dots,k_d), \quad k_j\geq 0,
\quad |\bk|_1:=\sum_{j=1}^d k_j.
$$
Let us consider 
  the closed  simplex 
  $$
 \overline{T}_0:=\{{\bf x}=(x_1, \ldots,x_d)\in \R^d: \,x_1+\ldots+x_d\leq 1, \,\,x_i\geq 0,\,i=1, \ldots,d\}.
  $$
It is well known that polynomial interpolation using elements of $\cP_r$ at the  grid points in $G_r\cap   \overline{T_0}$ is well posed (see e.g. Section 4 in \cite{SX}  or Chapter 2 in \cite{ciarlet2002finite}).
Let $\phi_j:=\phi_{j,T_0}$, $j=1,\dots,n_r$,  be the Lagrange polynomial
basis for $\cP_r$ corresponding to the points in $G_r\cap \overline {T_0}$.  Then, the operator
\be 
\label{defL}
L_{T_0}(f):=\sum_{\bx_j\in G_r\cap \overline{T_0}} f(\bx_j)\phi_j,
\ee 
is a bounded projector from $C(\overline\Omega)$ onto $\cP_r$, i.e.,
\be 
\label{projnorm}
\|L_{T_0}(f)\|_{C(\overline{T_0})}\le \Lambda_r\|f\|_{C(\overline{T_0})},
\quad \hbox{where}\quad \Lambda_r:=\|\sum_{j=1}^{n_r}|\phi_{j,T_0}({\bx})|\|_{C(\overline{T_0})},
\ee 
is  the Lebesgue constant which depends only on $r$, $d$,  and $T_0$. 

For each $k\ge 0$, we define $\cD_k=\cD_k(\Omega)$ to be the  partition of $\Omega$ into  dyadic cubes $I$ of side length $2^{-k}$.  Here we take a dyadic cube $I\in\cD_k$  to be the tensor product
  of the dyadic intervals $[(j-1)2^{-k},j2^{-k})$, $1\le  j\leq 2^k$.
We  then define the tensor product grid $G_{k,r}$ of $\overline\Omega=[0,1]^d$,
 \be
 \label{Gd}
  G_{k,r}:=\{\bx_1, \ldots,\bx_{\widetilde m}\}\subset \overline \Omega, \quad \widetilde m=(r2^k)^d,
\ee
   with spacing $h=2^{-k}(r-1)^{-1}$.
  We fix $k\ge 0$, $I\in\cD_k$,  and a simplex  $T$ in the simplicial decomposition of $I$, and  denote by $\phi_{j,T}$, the rescaled polynomial $\phi_{j,T}:= \phi_{j,T_0} \circ F_T^{-1}$ where $F_T: \overline{T}_0 \rightarrow \overline{T}$ is an affine map from the reference simplex $\overline{T}_0$ to $\overline{T}$. Notice that $\phi_{j,T}$, $j=1, \ldots,n_r$, are the Lagrange polynomial
basis for $\cP_r$, corresponding to the points in $G_{k,r}\cap \overline T$, and thus we obtain the projector $L_T$ onto $\cP_r$ which  satisfies
\be 
\label{projnorm1}
\|L_T(f)\|_{C(\overline T)}\le \Lambda_r(T) \|f\|_{C(\overline T)}, \quad \hbox{where}\quad \Lambda_r(T):=\|\sum_{j=1}^{n_r}|\phi_{j,T}({\bx})|\|_{C(\overline{T})},\quad T\subset I\in\cD_k,\ k\ge 0.
\ee 
   Here $\Lambda_r=\Lambda_r(T)$,
   i.e. $\Lambda_r(T)$ does not depend on $T$ since
   for $\bx\in \overline T$ we have $\phi_{j,T}({\bx})=\phi_{j,T_0}(F^{-1}_T(\bx))=\phi_{j,T_0}(\by)$, where $\by=F^{-1}_T(\bx)\in \overline {T_0}$. Moreover, the interpolating polynomial $L_T(f)$ provides a good   approximation to $f$ on $T$.  For example, for    any polynomial $P\in\cP_r$, we have
\be 
\label{projapprox}
\|f-L_T(f)\|_{C(\overline T)}\le \|f-P\|_{C(\overline T)}+ \|L_T(f-P)\|_{C(\overline T)}\le (1+\Lambda_r)\|f-P\|_{C(\overline T)}.
\ee

Let us denote by $S^*_{k}(f)$ the piecewise polynomial interpolant 
\be 
\label{pwpinter1}
S^*_k(f):=S^*_{k,r}(f):=\sum_{T\in\cT_{k}}L_T(f)\chi_T, 
\ee 
where $\cT_{k}$ is the collection of all simplicies arising from the 
(Kuhn-Tucker) decomposition of the dyadic cubes in $\cD_k(\Omega)$  and 
$\chi_T$ is the characteristic function of $\overline T$. The function
$S^*_{k}(f)$  is well defined and   continuous on $\overline\Omega$
 since the number  of points from $G_{k,r}$ that are on a 
 shared face of any two simplicies from $\cT_k$ is the same as the dimension of the polynomial space $\cP^{d-1}_r$ .  

The following theorem (proved in the  Appendix, \S\ref{T2.1}) describes the approximation accuracy of  $S^*_{k}$.

\begin{theorem}
    \label{T:interapprox}
     Let $s>0$,  $0<p\leq \infty$,  be fixed with $s>d/p$.  Let $r=r(s)$ be the smallest integer strictly larger than $\max(s,1)$ 
     and let $S_k^*$ be the interpolation operator \eref{pwpinter1}.  Then, for any $f\in B_p^s(\Omega)=B_\infty^s(L_p(\Omega))$ and any $   \tau\ge p$, we have
     \be 
     \label{Tpwpinter}
     \|f-S_k^*(f)\|_{L_\tau(\Omega)}\le C|f|_{B_p^s(\Omega)} 2^{-k(s-d/p+d/\tau)},
     \ee
     with $C$ independent of $f$ and $k$.
\end{theorem}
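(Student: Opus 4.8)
The plan is to estimate the error locally on each simplex $T$ using the near-best polynomial approximation property \eref{projapprox}, then sum the local errors in $L_\tau$ and convert the sum of local Besov-type quantities into a single global bound using the subadditivity of the $\ell_p$-quasinorm (since $\tau \ge p$). First I would fix $k$ and work on an arbitrary simplex $T \in \cT_k$ contained in a dyadic cube $I \in \cD_k$. The hypothesis $s > d/p$ guarantees, via the Sobolev embedding of $B_p^s$ into $C$, that point values make sense and that $f$ restricted to $T$ is controlled; moreover, on each cube $I$ there is a polynomial $P_I \in \cP_r$ of near-best $L_p(I)$-approximation satisfying $\|f - P_I\|_{L_p(I)} \le C\,\omega_r(f, 2^{-k}, I)_p$ by Whitney's theorem (rescaled to side length $2^{-k}$). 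Combining Whitney with the Sobolev-type inverse estimate on the finite-dimensional space $\cP_r$ on $I$, one gets the pointwise bound $\|f - P_I\|_{C(\overline I)} \le C\, 2^{kd/p}\,\omega_r(f,2^{-k},I)_p$, and then \eref{projapprox} applied on $T \subset I$ yields $\|f - L_T(f)\|_{C(\overline T)} \le (1+\Lambda_r)\|f - P_I\|_{C(\overline I)} \le C\, 2^{kd/p}\,\omega_r(f,2^{-k},I)_p$.

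Next I would pass from the $C(\overline T)$ bound to an $L_\tau(T)$ bound: since $|T| \asymp 2^{-kd}$, we have $\|f - L_T(f)\|_{L_\tau(T)} \le |T|^{1/\tau}\|f - L_T(f)\|_{C(\overline T)} \le C\, 2^{-kd/\tau}\, 2^{kd/p}\,\omega_r(f,2^{-k},I)_p$. Raising to the power $\tau$ and summing over all $T \in \cT_k$ (there are boundedly many simplices per cube, so summing over $T$ is comparable to summing over $I \in \cD_k$), one obtains
\[
\|f - S_k^*(f)\|_{L_\tau(\Omega)}^\tau \le C\, 2^{-kd}\, 2^{kd\tau/p} \sum_{I \in \cD_k} \omega_r(f,2^{-k},I)_p^\tau.
\]
Because $\tau \ge p$, the $\ell_{\tau/p}$-quasinorm is dominated by the $\ell_1$-quasinorm, so $\sum_I \omega_r(f,2^{-k},I)_p^\tau = \sum_I \big(\omega_r(f,2^{-k},I)_p^p\big)^{\tau/p} \le \Big(\sum_I \omega_r(f,2^{-k},I)_p^p\Big)^{\tau/p}$. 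Now $\sum_{I\in\cD_k} \omega_r(f,2^{-k},I)_p^p$ is controlled by the global modulus $\omega_r(f, c\,2^{-k}, \Omega)_p^p$ (a standard overlapping-patches argument; the constant $c$ absorbs the fact that the $r$-th difference stencil over a cube of side $2^{-k}$ reaches into neighboring cubes). Using the membership characterization \eref{member}, namely $\omega_r(f,t)_p \le |f|_{B_p^s(\Omega)}\, t^s$, this gives $\sum_I \omega_r(f,2^{-k},I)_p^p \le C\, |f|_{B_p^s(\Omega)}^p\, 2^{-ksp}$, hence $\sum_I \omega_r(f,2^{-k},I)_p^\tau \le C\, |f|_{B_p^s(\Omega)}^\tau\, 2^{-ks\tau}$.

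Substituting back, $\|f - S_k^*(f)\|_{L_\tau(\Omega)}^\tau \le C\, |f|_{B_p^s(\Omega)}^\tau\, 2^{-kd}\, 2^{kd\tau/p}\, 2^{-ks\tau}$, and taking the $\tau$-th root gives $\|f - S_k^*(f)\|_{L_\tau(\Omega)} \le C\, |f|_{B_p^s(\Omega)}\, 2^{-k(s - d/p + d/\tau)}$, which is \eref{Tpwpinter}. (When $\tau = \infty$ the argument is simpler: one takes the supremum over $T$ in place of the sum, and the term $2^{-kd/\tau}$ is absent, giving exponent $s - d/p$.) I expect the main obstacle to be the bookkeeping in the step that converts the local moduli $\omega_r(f,2^{-k},I)_p$ into the global modulus $\omega_r(f,c\,2^{-k},\Omega)_p$ — one must handle the overlap of difference stencils near cube boundaries carefully and be sure the constant $c$ depends only on $r$ and $d$ — together with the Whitney-plus-inverse-inequality step producing the correct $2^{kd/p}$ factor; both are standard but are where the exponent $-d/p$ actually comes from, so they need to be done cleanly. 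All of this is exactly the sort of argument carried out in detail in the references \cite{DS,DP,DSmono} and is reproduced in the Appendix (\S\ref{T2.1}).
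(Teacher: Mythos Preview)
Your overall strategy---bound the error locally in $C(\overline T)$ via \eref{projapprox}, convert to $L_\tau(T)$ by the measure factor, then sum using $\ell_{\tau/p}\le\ell_1$ and subadditivity---is sound and is in fact how the paper organizes the argument. The genuine gap is the step
\[
\|f-P_I\|_{C(\overline I)} \le C\,2^{kd/p}\,\omega_r(f,2^{-k},I)_p,
\]
which you attribute to ``Whitney plus the inverse estimate on $\cP_r$.'' The inverse (Bernstein) inequality $\|P\|_{C(I)}\le C|I|^{-1/p}\|P\|_{L_p(I)}$ holds for \emph{polynomials} $P\in\cP_r$, not for $f-P_I$, which is a general continuous function. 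Equivalently, you are asserting $E_r(f,I)_\infty\le C|I|^{-1/p}E_r(f,I)_p$ with a constant independent of $f$; this fails already for a smooth narrow bump on $I$ (the $L_p$ error can be made arbitrarily small while the $L_\infty$ error stays of order one). The single-scale quantity $\omega_r(f,2^{-k},I)_p$ simply does not see fine-scale structure of $f$ inside $I$, which is exactly what controls the sup norm of $f-P_I$.

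What the paper does to repair this (see \eref{follows2}--\eref{follows3}) is a telescoping argument over \emph{all} finer dyadic levels $j\ge k$: writing $f(\bx)-P_I(\bx)$ as $\sum_{j\ge k}(P_{J_{j+1}}-P_{J_j})(\bx)$ along the nested chain of dyadic cubes $J_j\ni\bx$, each term \emph{is} a polynomial, so the inverse estimate applies termwise and yields
\[
\|f-P_I\|_{L_\infty(I)}\;\le\;C\sum_{j\ge k}2^{jd/p}\,\widetilde w_r(f,2^{-j})_{L_p(I)}.
\]
Only then can one swap the sum over $I\in\cD_k$ with the sum over scales $j$, invoke the set subadditivity \eref{wsubadd} of the \emph{averaged} modulus $\widetilde w_r$ at each level $j$ separately, and close the estimate. (This is also why the paper splits into the cases $p\le 1$ and $p\ge 1$: to handle the outer sum over $j$ one needs either the $p$-triangle inequality or H\"older.) Your local-to-global step via subadditivity is correct in spirit, but it must be applied scale by scale to $\widetilde w_r(f,2^{-j})_{L_p(I)}$, not to the single coarse-scale modulus $\omega_r(f,2^{-k},I)_p$.
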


We will also need similar results where the approximation takes place   in the $H^1(\Omega)=B_2^1(L_2(\Omega))$ norm. The following theorem, see the Appendix, \S\ref{T2.2} for its proof, holds.
\begin{theorem}
    \label{T:SkH1}
    Let  $d\ge 2$.  If $f\in B_q^s(L_p(\Omega))$ with $s>d/p$ and $0<p\le 2$, and $S^*_k(f)$ is as defined in \eref{pwpinter1}, then
    \be 
    \label{projapprox3}
\|f-S_k^*(f)\|_{H^1(\Omega)}\le  C|f|_{B_q^s(L_p(\Omega))} 2^{-k(s-1-d/p+d/2)},\quad k\ge 0.
\ee
\end{theorem}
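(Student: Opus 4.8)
The plan is to write $\|f-S_k^*(f)\|_{H^1(\Omega)}\asymp \|f-S_k^*(f)\|_{L_2(\Omega)} + \|\nabla(f-S_k^*(f))\|_{L_2(\Omega)}$, estimate the $L_2$ term directly from Theorem \ref{T:interapprox}, and obtain the gradient term by a multilevel (telescoping) argument over the dyadic scales $j\ge k$. Set $\beta := s-1-d/p+d/2 = (s-d/p)+\tfrac{d-2}{2}$, so $\beta>0$ since $s>d/p$ and $d\ge2$. Because $r=r(s)$ is the same in both spaces, comparing \eref{Bsemi1} and \eref{Bsemi11} gives $B_q^s(L_p(\Omega))\subset B_\infty^s(L_p(\Omega))=B_p^s(\Omega)$ with $|f|_{B_p^s(\Omega)}\le C|f|_{B_q^s(L_p(\Omega))}$, so it suffices to bound everything by $|f|_{B_q^s(L_p(\Omega))}$. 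Since $p\le 2$, Theorem \ref{T:interapprox} with $\tau=2$ yields, for every $j\ge0$,
\[
\|f-S_j^*(f)\|_{L_2(\Omega)}\le C|f|_{B_q^s(L_p(\Omega))}\,2^{-j(\beta+1)};
\]
in particular $S_j^*(f)\to f$ in $L_2(\Omega)$, and the $L_2$ part of \eref{projapprox3} already holds, with the stronger rate $2^{-k(\beta+1)}$.

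For the gradient term, put $\delta_j:=S_j^*(f)-S_{j-1}^*(f)$ for $j\ge1$. Each $\delta_j$ is continuous on $\overline\Omega$ (a difference of continuous piecewise polynomials), hence $\delta_j\in H^1(\Omega)$ with distributional gradient equal to its piecewise gradient. The essential point is that the Kuhn--Tucker decompositions are nested under dyadic refinement: every simplex $T\in\cT_j$ lies in a single $T'\in\cT_{j-1}$. Thus $\restr{S_{j-1}^*(f)}{T}$ is the single polynomial $P_{T'}:=L_{T'}(f)\in\cP_r$, and since $L_T$ reproduces $\cP_r$,
\[
\restr{\delta_j}{T} \;=\; L_T(f)-L_T(P_{T'})\;=\;L_T(f-P_{T'})\in\cP_r ,
\]
i.e.\ $\delta_j$ is a genuine polynomial of degree $<r$ on each $T\in\cT_j$. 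The simplices of $\cT_j$ are $2^{-j}$-dilates of finitely many reference simplices, so the standard inverse inequality on each $T$ gives $\|\nabla\delta_j\|_{L_2(T)}\le C2^{j}\|\delta_j\|_{L_2(T)}$ with $C=C(r,d)$; squaring and summing over $T\in\cT_j$ gives $\|\nabla\delta_j\|_{L_2(\Omega)}\le C2^{j}\|\delta_j\|_{L_2(\Omega)}$.

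Combining this with the $L_2$ bounds above, $\|\delta_j\|_{L_2(\Omega)}\le \|f-S_j^*(f)\|_{L_2(\Omega)}+\|f-S_{j-1}^*(f)\|_{L_2(\Omega)}\le C|f|_{B_q^s(L_p(\Omega))}2^{-j(\beta+1)}$, whence $\|\nabla\delta_j\|_{L_2(\Omega)}\le C|f|_{B_q^s(L_p(\Omega))}2^{-j\beta}$. Since $\beta>0$, $\sum_{j\ge1}\|\nabla\delta_j\|_{L_2(\Omega)}<\infty$; as $S_m^*(f)-S_n^*(f)=\sum_{j=n+1}^m\delta_j$ and $S_j^*(f)\to f$ in $L_2(\Omega)$, the sequence $(S_m^*(f))_m$ is Cauchy in $H^1(\Omega)$ with limit $f$ (so, as a byproduct, $f\in H^1(\Omega)$), and $f-S_k^*(f)=\sum_{j>k}\delta_j$ with convergence in $H^1(\Omega)$. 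Therefore
\[
\|\nabla(f-S_k^*(f))\|_{L_2(\Omega)}\le\sum_{j>k}\|\nabla\delta_j\|_{L_2(\Omega)}\le C|f|_{B_q^s(L_p(\Omega))}\sum_{j>k}2^{-j\beta}\le C'|f|_{B_q^s(L_p(\Omega))}2^{-k\beta},
\]
and adding the $L_2$ part (rate $2^{-k(\beta+1)}$, dominated by $2^{-k\beta}$) proves \eref{projapprox3}.

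The main obstacle is the inverse-inequality step: one really needs $\delta_j$ to be a single polynomial of bounded degree on each shape-regular simplex of scale $2^{-j}$. Forming the difference on the common refinement of $\cT_j$ and $\cT_{j-1}$ could a priori create thin slivers, on which inverse inequalities degenerate; this is precisely why the nestedness of the Freudenthal--Kuhn triangulation under dyadic refinement is used (and should be recorded or verified), and why the estimate is organized as a telescoping sum between consecutive levels rather than applied directly on $\cT_k$ (on $\cT_k$ the error $f-S_k^*(f)$ is not polynomial, and $\nabla f$ is not controlled locally). The remaining bookkeeping closes because $\beta=(s-d/p)+(d-2)/2$ is strictly positive under the hypotheses $s>d/p$, $d\ge2$, which makes all the geometric series converge.
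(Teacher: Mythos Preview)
Your proof is correct and follows essentially the same route as the paper's: reduce to $q=\infty$, use Theorem~\ref{T:interapprox} for the $L_2$ estimate, telescope $f-S_k^*(f)=\sum_{j>k}(S_j^*(f)-S_{j-1}^*(f))$, apply the inverse inequality on each simplex of $\cT_j$ to trade the gradient for a factor $2^j$, and sum the resulting geometric series. The paper states without elaboration that $R_k^*(f)$ is a polynomial of order $r$ on each $T\in\cT_k$, whereas you usefully make explicit that this relies on the nestedness of the Kuhn--Tucker triangulations under dyadic refinement.
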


There are many results concerning the embeddings of Besov spaces into other Besov spaces or $L_p$ spaces.  We are particularly interested in those when a Besov space is embedded into $C(\overline\Omega)$.  For this, we can use the following theorem proved in
the Appendix, \S\ref{T2.3}.

\begin{theorem}
\label{L:cont}
For every $f\in B_p^s(\Omega)$, $s>d/p$, $0<p\leq \infty$,  there is a continuous function   
$\widetilde f$ such that $f=\widetilde f$ a.e. In fact, $\widetilde f\in$ Lip $\alpha$, with $\alpha:=s-d/p$, that is, 
 \be 
\label{fcont}
\omega_r(\widetilde f,t)_{C(\overline\Omega)}\le C|f|_{B_p^s(\Omega)}t^{s-d/p}, \quad t>0.
\ee    
\end{theorem}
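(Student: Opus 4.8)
\emph{Strategy and the continuous representative.} My plan is to produce $\widetilde f$ as the uniform limit of the interpolants $S^*_k(f)$ of \eref{pwpinter1} — which is immediate from Theorem \ref{T:interapprox} — and then to establish the H\"older bound by a direct, scale‑by‑scale local polynomial approximation argument that uses \emph{only} the $L_p$‑modulus of $f$, so that no circular dependence on the quantity $\omega_r(\widetilde f,\cdot)_{C(\overline\Omega)}$ being estimated arises. Set $\alpha:=s-d/p>0$. Theorem \ref{T:interapprox} with $\tau=\infty$ (admissible since $\infty\ge p$, and then $s-d/p+d/\tau=\alpha$) gives $\|f-S^*_k(f)\|_{C(\overline\Omega)}\le C|f|_{B_p^s}2^{-k\alpha}$, whence $\|S^*_{k+1}(f)-S^*_k(f)\|_{C(\overline\Omega)}\le C|f|_{B_p^s}\bigl(2^{-(k+1)\alpha}+2^{-k\alpha}\bigr)$ is summable and $\bigl(S^*_k(f)\bigr)_k$ converges uniformly to some $\widetilde f\in C(\overline\Omega)$; taking instead $\tau=p$ shows $S^*_k(f)\to f$ in $L_p(\Omega)$, so $\widetilde f=f$ a.e.

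\emph{Localization.} Since $\omega_r(\widetilde f,\cdot)_{C(\overline\Omega)}$ is nondecreasing and constant on $[\diam(\Omega)/r,\infty)$ (for $r|\bh|>\diam(\Omega)$ the set $\Omega_{r\bh}$ is empty), it suffices to bound it for $0<t\le\diam(\Omega)/r$. Fix such a $t$, a vector $\bh$ with $\rho:=|\bh|\le t$, and a point $\bx$ with $[\bx,\bx+r\bh]\subset\overline\Omega$. Because $\Omega=(0,1)^d$ is a cube one can choose — one coordinate at a time, without insisting it be centered, which sidesteps any boundary issue — an axis‑parallel cube $Q\subseteq\overline\Omega$ with $\ell(Q)\asymp\rho$ that contains the stencil $\{\bx+k\bh:0\le k\le r\}$. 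Since $\Delta^r_\bh P\equiv0$ for every $P\in\cP_r$,
\[
|\Delta^r_\bh\widetilde f(\bx)|=|\Delta^r_\bh(\widetilde f-P)(\bx)|\le 2^r\|\widetilde f-P\|_{C(Q)},
\]
so it remains only to produce some $P\in\cP_r$ with $\|\widetilde f-P\|_{C(Q)}\le C|f|_{B_p^s}\ell(Q)^\alpha$.

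\emph{The local estimate by dyadic refinement.} Fix nested cubes $Q=Q_0\supset Q_1\supset\cdots$ with $\ell(Q_{j+1})=\tfrac12\ell(Q_j)$; for $\by\in Q$ arrange $\by\in Q_j=Q_j(\by)$ for all $j$, and let $P_j=P_j(\by)\in\cP_r$ be a near‑best $L_p(Q_j)$‑approximation of $\widetilde f$. Whitney's inequality (valid for all $0<p\le\infty$), together with $Q_j\subset\Omega$ and $\omega_r(\widetilde f,\cdot)_{L_p}=\omega_r(f,\cdot)_{L_p}$, gives $\|\widetilde f-P_j\|_{L_p(Q_j)}\le C|f|_{B_p^s}\ell(Q_j)^s$. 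Combining the levels $j$ and $j+1$ by the (quasi‑)triangle inequality and by Nikolskii's inequality $\|R\|_{C(Q_{j+1})}\le C|Q_{j+1}|^{-1/p}\|R\|_{L_p(Q_{j+1})}$ for $R\in\cP_r$ yields
\[
\|P_j-P_{j+1}\|_{C(Q_{j+1})}\le C|f|_{B_p^s}\,\ell(Q_j)^{s-d/p}=C|f|_{B_p^s}\,2^{-j\alpha}\ell(Q)^\alpha.
\]
Summing over $j\ge0$ (geometric, as $\alpha>0$) and using that $P_j(\by)\to\widetilde f(\by)$ — which holds since $\widetilde f$ is continuous while $|Q_j|^{-1/p}\|\widetilde f-P_j\|_{L_p(Q_j)}\le C|f|_{B_p^s}\ell(Q_j)^\alpha\to0$ — gives $|\widetilde f(\by)-P_0(\by)|\le C|f|_{B_p^s}\ell(Q)^\alpha$. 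As $Q_0=Q$ for every $\by$, the polynomial $P_0$ is the same for all $\by$, so $\|\widetilde f-P_0\|_{C(Q)}\le C|f|_{B_p^s}\ell(Q)^\alpha\le C|f|_{B_p^s}\rho^\alpha$. Feeding this into the display of the previous paragraph and taking the supremum over $\bh$ and $\bx$ gives $\omega_r(\widetilde f,t)_{C(\overline\Omega)}\le C|f|_{B_p^s}t^\alpha$ for $0<t\le\diam(\Omega)/r$, and larger $t$ follow by monotonicity.

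\emph{Expected main obstacle.} The delicate point is the \emph{choice of local approximants}. It is tempting to estimate $\|\widetilde f-P\|_{C(Q)}$ using the interpolants $S^*_k(f)$, but the natural bound for the interpolation error on a slightly enlarged simplex feeds $\omega_r(\widetilde f,\cdot)_{C(\overline\Omega)}$ back into the right‑hand side and the argument stalls; passing to best $L_p$‑polynomials and Whitney's inequality is what breaks this loop. Relatedly, one cannot obtain the result from an ``approximation rate $\Rightarrow$ smoothness'' (inverse‑theorem) argument over continuous piecewise polynomials, since such spaces only carry Lipschitz order $1$ whereas $\alpha=s-d/p$ may exceed $1$; the multiscale estimate above, which exploits the bound on the $L_p$‑modulus of $f$ at \emph{every} dyadic scale, is precisely what supplies the full order $\alpha$. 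The rest reduces to bookkeeping of the Whitney, Nikolskii and Markov constants, which depend only on $r$ and $d$.
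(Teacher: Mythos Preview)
Your argument is correct and is essentially the paper's own proof: both localize the $r$-th difference to a small cube $J$ (your $Q$) containing the stencil and then run a dyadic telescoping argument with near-best $L_p$ polynomial approximations, Whitney's inequality, and the Nikolskii-type comparison $\|\cdot\|_{C}\lesssim |I|^{-1/p}\|\cdot\|_{L_p}$ to produce a single $P\in\cP_r$ with $\|\widetilde f-P\|_{C(Q)}\le C|f|_{B_p^s}\ell(Q)^{\alpha}$. The paper states this more tersely by saying ``argue as in the proof of \eref{Tk1}--\eref{Tk2}, replacing $\Omega$ by $J$'', which is exactly your refinement $Q=Q_0\supset Q_1\supset\cdots$.

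One small circularity to flag: you obtain the continuous representative $\widetilde f$ as the uniform limit of the \emph{interpolants} $S^*_k(f)$ via Theorem~\ref{T:interapprox}, but $S^*_k$ uses point values of $f$ and is only well defined once a continuous representative is already in hand. The paper avoids this by building $\widetilde f$ from the piecewise \emph{best $L_p$} approximations $S_k(f)$ of \eref{defSk1} (well defined for any $f\in L_p$), proving $S_k(f)\to\widetilde f$ pointwise and in $L_\infty$ in Case~1 of Theorem~\ref{T:SkLtau}, and only then invoking interpolation. Your own local estimate already contains the fix: the telescoping bound $|\widetilde f(\by)-P_0(\by)|\le\sum_{j\ge0}\|P_{j+1}-P_j\|_{C(Q_{j+1})}$ shows that the pointwise limit of the $P_j(\by)$'s exists and is uniformly close to $P_0$ on $Q$, so you can \emph{define} $\widetilde f$ this way (or equivalently via $S_k(f)$) and dispense with Theorem~\ref{T:interapprox} in the first paragraph. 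Everything else --- including your justification that $P_j(\by)\to\widetilde f(\by)$ from continuity of $\widetilde f$ together with $|Q_j|^{-1/p}\|\widetilde f-P_j\|_{L_p(Q_j)}\to0$, and the reduction of $t>\diam(\Omega)/r$ to smaller $t$ by monotonicity --- is fine.
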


 \section{Optimal recovery}
 \label{S:OR}
 In this section, we study the question of
  whether the information $({\bf f},{\bf g})$ at the {\it fixed} data sites 
  $$\cX:=\{\bx_1, \ldots, \bx_{\widetilde m}\}, \quad 
  \cZ:=\{\bz_1,\ldots, \bz_{\overline m}\},
  $$
  is sufficient to determine   $u$   to a prescribed accuracy $\e$  in the $H^1(\Omega)$ norm. Questions of this type are well studied and referred to  as optimal recovery (OR). The answer to our specific  question depends on
 the numbers $\widetilde m,\overline m$, the positions of the points from $\cX$ and  $\cZ$, and the 
 smoothness assumptions we make on $f$ and $g$. 
  Thus, we require that $f,g$
 are in certain Besov spaces  that compactly embed into 
$C(\overline\Omega)$ 
  and  
 $C(\partial \Omega)$, respectively,  and are such that $u$ lies in a compact subset of $H^1(\Omega)$.

     Before presenting the results on optimal recovery in this section, let us remark that the results given below
     are (in essence) all known.   For example, the optimal recovery rates for the Besov model classes of this section have all been given in the 
      papers \cite{DNSI,dahlke2006optimal,vybiral2007sampling} save for one small discrepancy on optimal recovery in $H^{-1}(\Omega)$  (explained below) and for the fact that
      they do not directly consider
     recovery on manifolds like $ \partial\Omega$. 
     The main distinction between the results in \cite{vybiral2007sampling} and ours given below is in how the optimal recovery is obtained/proven. The optimal recovery method in \cite{vybiral2007sampling} uses linear combinations of certain \lq bump functions\rq, whereas we use  continuous piecewise polynomial Lagrange interpolation.
     One can argue that the fact that optimal recovery can be obtained by interpolation is also known since it is a prominent method of approximation in Finite Element Methods (see for instance \cite{ern2021finite}).  However, the latter community usually considers Sobolev classes, rather than the more general Besov classes, and also typically
     restricts $p\geq 1$. 
     In summary, we could not find a direct reference which proves the results given below based on 
     continuous piecewise polynomial interpolation.  In any case, those not familiar
     with optimal recovery may find the exposition below useful and the OR community can just do a very light reading
     of this section.  The statements in this section are all proved in detail
     in the appendix.

 We shall assume that  
 \be 
 \label{condf}
 f\in \cF:=U(\cB),\ \ \cB=B_q^s(L_p(\Omega)), \quad 0<q, p\le\infty, \quad s>d/p,
 \ee 
 where $U(\cB)$ is the unit ball of $\cB$ and
 the restriction on $s$  ensures that $\cF$ compactly embeds  into 
 $C(\overline\Omega)$. 
 
 The function $g$ is defined on the manifold $\partial \Omega$.  There are   two   ways to describe smoothness conditions on such functions $g$. The first  is through the trace operator $Tr:=Tr_{\partial\Omega}$.  The second is to  place smoothness conditions directly on  $g$ as a function from
 $C(\partial\Omega)$.  The latter is usually referred to as an intrinsic definition.    We choose in this paper to define the set $\cG$ via the trace operator.  Namely, we define the model class $\cG$  via the trace of functions from the unit ball of a Besov class
 $B_{\overline q}^{\overline s}(L_{\overline p}(\Omega))$ with 
 $\overline s> d/\overline p$ to ensure continuity.  We also place the restrictions $0<\bar p\le 2$ in order to simplify the presentation that follows.  In other words, our model class assumptions on the boundary function $g$ take the form
 \be 
 \label{condg}
 g\in \cG:= Tr(U(\overline\cB)),\ \ \hbox{where}\ \ \overline\cB:=B_{\overline q}^{\overline s}(L_{\overline p}(\Omega)),\quad \hbox{with}\ \ \bar s>d/\bar p, \quad 0<\bar p\le 2.
 \ee 
Then $\cG$ is a compact subset of both $H^{1/2}(\partial\Omega)$ and $C(\partial \Omega)$. The latter follows because $\overline\cB$ embeds compactly into $C(\overline\Omega)$ since $\bar s>d/\bar p$.  When taking the trace this compactness is preserved. The former embedding follows since the stated conditions also imply that $\bar{s} > 1 + (d/\bar{p} - d/2)$ as $d \geq 2$, which implies that $\overline\cB$ embeds into $H^1(\Omega)$. Taking the trace will then be compact in $H^{1/2}(\partial \Omega)$. Note that $\bar s>1$.
  
 Note that, in general, the parameters $s,p,q$ used in defining $\cF$ are different from $\overline s,\overline p, \overline q$ used in defining $\cG$.  
These assumptions imply that
 the function $u$ we want to numerically approximate is an element of  the set
  \be
 \label{defcU}
 \cU :=\{\widetilde u\in C(\overline \Omega):\ \widetilde u \ {\rm satisfies }\ \eref{main} \ {\rm with} \ f\in \cF,\ g\in \cG\}.
 \ee 

 Given data $
 {\bf f}:=(f_1, \ldots,f_{\widetilde m})$, the totality of information we have about $f$ is that it belongs to the set
 $$
 \cF_{\rm data}:=  \cF_{{\rm data}(f)}:=\{\widetilde f\in\cF: \ \widetilde f_i=f_i,\ i=1,\dots, \widetilde m \}.
 $$
 Similarly, if the data $
 {\bf g}:=(g_1, \ldots,g_{\overline{m}})$
  is coming from  a $g\in \cG$, then, 
 what we know about  $g$ is that in lies in the set
 \be 
 \label{cGdata}
 \cG_{\rm data}:=\cG_{{\rm data}(g)}:= \{\widetilde g\in\cG: \ \widetilde g_i=g_i,\ i=1,\dots,\overline m\}.
 \ee 
 Notice that    $g_i=u(\bz_i)$, $i=1,\dots,\overline m$.
Finally, the totality of information we have about the sought after $u$ is that it is in the set
 $$
 \cU_{\rm data}:={\cU_{{\rm data}(u)}:=} \{\widetilde u\in\cU{:\,-\Delta \widetilde u (\bx_i)=f_i,\ i=1,\dots,\widetilde m,\quad \widetilde u(\bz_i)=g_i,\ \ i=1,\dots,\overline m}\},
 $$
or equivalently
  $$
  \cU_{\rm data}:=\cU_{{\rm data}(u)}:=\{\widetilde u: \widetilde u \ \hbox{is a solution to \eref{main} with } \, f\in \cF_{\rm data}, \quad  g\in \cG_{\rm data}\}.
  $$
    We are interested in knowing to what extent the information that $u\in\cU_{\rm data}$ 
  identifies $u$. 

  There is a simple theoretical answer to such questions.   Given a compact set $K$ of a Banach space $X$, we let $B(K):=B(K)_X$
  denote a ball with the smallest radius  in $X$ which contains $K$, called a {\it Chebyshev ball} of $K$ in $X$. If we wish to provide
  an element from $X$ that simultaneously approximates all  elements in $K$ (in the norm of $X$), then the center of this ball is the best we can do and its radius 
  \be 
  \label{ORrate}
  R(K)_X:= {\rm rad}(B(K))_X
  \ee 
  is the {\it optimal error} we can obtain. It is called {\it the error of optimal recovery} of $K$. 

    If  $\cU_{\rm data}$ is a compact subset of $X$, let $B=B(\cU_{\rm data})_X$ be  a Chebyshev ball
  of this set.   Since all we know about $u$  is that it is in  $\cU_{\rm data}$, 
 any element from $B(\cU_{\rm data})_X$ will give a near best approximation to $u$ in $\|\cdot\|_X$.
The radius of $B$ 
  \be 
  \label{OReffor}
  R(\cU_{{\rm data}(u)})_X:= {\rm rad}(B(\cU_{{\rm data}(u)}))_X
  \ee 
is then  the optimal recovery error. We are interested in bounds for $R(\cU_{{\rm data}(u)})_X$, where $\|\cdot\|_X$ is the norm in which we wish to measure accuracy. We introduce similar notation for the recovery of $f$ and $g$.

  The recovery rate \eref{OReffor} will not only depend on the data sites $\cX$, and $\cZ$,  but also on the values $({\bf f}, {\bf g})$
  assigned at these data sites.  In order to obtain
  uniform estimates, we fix the data sites $(\cX,\cZ)$ and introduce
  \be 
  \label{uOR}
  R^*(\cU,\cX,\cZ)_X:= \sup_{u\in \cU} R(\cU_{{\rm data}(u)})_X,
  \ee
  where the data values come from any $u\in\cU$.  This is measuring the worst possible performance over the class $\cU$ and is called the {\it uniform optimal recovery rate} at the fixed data sites 
  $(\cX,\cZ)$.
 If we prescribe a budget for the number $m=|\cX|+|\cZ|=\widetilde m+\overline m$ of data
  sites, we can ask for the optimal error we can achieve under such a budget restriction. Accordingly, we define
 $$
 R_m^*(\cU)_X:=\inf_{
   \cX\subset\overline\Omega, \,\cZ\subset \partial \Omega,\, |\cX|+|\cZ|=m} R^*(\cU,\cX,\cZ)_X,\quad m\geq 2.
 $$
  Similarly, 
 $$
  R^*(\cF,\cX)_X:= \sup_{f\in \cF} R(\cF_{{\rm data}(f)})_X \quad  \hbox{and}\quad R_{\widetilde m}^*(\cF)_X:= \inf_{
   \cX\subset\overline\Omega, |\cX|=\widetilde m} R^*(\cF,\cX)_X,\quad \widetilde m\geq 1,
$$
 where the latter is  the minimum error we can achieve over all the possible choices of data sites $ \cX=\{\bx_1,\dots,\bx_{\widetilde m}\}$.  
    We can similarly ask the question of how well can we  can  recover
  $g$ from the data ${\bf g}$.  Each of these questions has an associated uniform optimal recovery error which is dependent on
  which norm $\|\cdot\|_X$   we use to measure the error of recovery.   

  \subsection{Optimal recovery of $f$}
  \label{SS:Of}

  Let $\cF:=U(\cB)$ where  $\cB=B_q^s(L_p(\Omega))$ with $0\le q,p\le \infty$ and $s>\frac{d}{p}$. 
Recall, see \eref{Gd},  the tensor product grid $G_{k,r}$ of $\overline\Omega=[0,1]^d$ for 
 $r\geq 2$,
   with spacing $h=2^{-k}(r-1)^{-1}$ (the role of $r$ will be made clear below).
We will prove that, see Theorem \ref{T:ORf}, 
 $$
R^*(\cF,G_{k,r})_X\lesssim \widetilde m^{-\alpha_X}, \quad \widetilde m=(r2^k)^d.
  $$ 
 Moreover, we show that  for any choice of data sites $\cX\subset \overline\Omega$ with 
  $|\cX|=\widetilde m$, we have
  $$
  R^*(\cF,\cX)_X\gtrsim \widetilde m^{-\alpha_X},
  $$
  and therefore 
  $R_{\widetilde m}^*(\cF)_X\asymp \widetilde m^{-\alpha_X}$.  
The proof of this theorem is presented in the Appendix, \S\ref{T3.1}.

  \begin{theorem}
      \label{T:ORf}
      Let $\Omega=(0,1)^d$ and  $\cF:=U(\cB)$, where $\cB$ is the Besov space $B_q^s(L_p(\Omega))$ with $s>d/p$, and  $0<p,q\leq \infty$.
      Then the following holds for the optimal recovery rate   $R_{\widetilde m}^*(\cF)_X$ in the norm of $X$:
      \vskip .1in
      \noindent 
      {\rm (i)} If $X=C(\Omega)$, then
      \be 
      \label{TF1}
      R_{\widetilde m}^*(\cF)_X \asymp {\widetilde m}^{-\alpha_C},\ \widetilde m\ge 1,\quad {\rm where} \quad \alpha_C:= \frac{s}{d}-\frac{1}{p},
      \ee 
      with constants of equivalence  independent of $\widetilde m$.

      \vskip .1in
      \noindent 
      {\rm (ii)} If $X=L_\tau(\Omega)$, $\tau>0$, then
      \be 
      \label{TF2}
      R_{\widetilde m}^*(\cF)_X\asymp \widetilde m^{-\alpha_\tau},\ \widetilde m\ge1, \quad {\rm where}\quad \alpha_\tau:= \frac{s}{d}-\left [\frac{1}{p}-\frac{1}{\tau}\right ]_+,
      \ee 
      with constants of equivalence independent of $\widetilde m$.

      \vskip .1in
      \noindent 
      {\rm (iii)} If $X=H^{1}(\Omega)$,  $0<p\le 2$, then 
      \be 
      \label{TF3}
      R_{\widetilde m}^*(\cF)_X\asymp \widetilde m^{-\alpha_H}, \ \widetilde m\ge 1,\quad {\rm where}\quad \alpha_H:= \frac{s-1}{d}-\left[\frac{1}{p}-\frac{1}{2}\right] ,
      \ee 
      with constants of equivalence independent of $\widetilde m$.

      \vskip .1in
      \noindent 
      {\rm (iv)} If $X=H^{-1}(\Omega)$, then 
      \begin{itemize}
          \item if $d \geq 3$  and $0<p\leq \infty$, or $d = 2$ and $1<p\leq \infty$, we have
      \be 
      \label{TF4}
      R_{\widetilde m}^*(\cF)_X\asymp \widetilde m^{-  \alpha_{-1}},\ \widetilde m\ge 1 \quad {\rm where} \quad \alpha_{-1}:= \frac{s}{d}-\left[\frac{1}{p}-\frac{1}{\delta}\right]_+ \ {\rm and} \ \frac{1}{\delta}:=\frac{1}{2}+\frac{1}{d},
      \ee 
      with constants of equivalence independent of $\widetilde m$. 
      \item if $d = 2$ and $0<p \leq 1$, we have
    \be 
      \label{TF3-d2}
       \widetilde m^{-  \alpha_{-1}}\lesssim R_{\widetilde m}^*(\cF)_X\lesssim \log(\widetilde m)\widetilde m^{-  \alpha_{-1}},\qquad  
       \qquad \widetilde m\ge 1,
      \ee       
      with $\alpha_{-1}$ and $\delta$ as in \eref{TF4}.
    \end{itemize}  
      Moreover, when $\widetilde m=(r2^k)^d$,   the upper bounds in each of these estimates are obtained when we take the data sites   $\cX=G_{k,r}$, provided $r>\max(s,1)$. 
      In this case, the approximant to $f$ is given by the continuous extension of the  piecewise polynomial 
$$
S_k^*(f)=\sum_{T\in\cT_{k}}L_T(f)\chi_T,
$$
described in \eref{pwpinter1}, where $\cT_{k}=\cT_{k}(\Omega)=\bigcup T$ 
 is the collection of all simplicies arising from the (Kuhn-Tucker)
decomposition of the
dyadic cubes in $\cD_k(\Omega)$, 
the $\chi_T$'s are the characteristic functions of $T$, and  the  $L_T(f)$'s are the polynomials of order $r$ ($L_T(f)\in\cP^d_r$) gotten by interpolating $f$ at    the data points  $\overline T\cap G_{k,r}$.
\end{theorem}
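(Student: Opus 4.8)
The plan is to prove Theorem \ref{T:ORf} by separately establishing upper bounds (achievability by interpolation at the grid $G_{k,r}$) and matching lower bounds (no method can do better), handling the four norms $X\in\{C(\Omega),L_\tau(\Omega),H^1(\Omega),H^{-1}(\Omega)\}$ in parallel since the structure is the same.

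\textbf{Upper bounds.} The key observation is that $R^*(\cF,\cX)_X = \sup_{f\in\cF} \mathrm{rad}(\cF_{\mathrm{data}(f)})_X$, and since $0\in\cF$ (scaled appropriately) and $\cF$ is convex and symmetric, $\mathrm{rad}(\cF_{\mathrm{data}(f)})_X \le \sup\{\|h\|_X : h\in 2\cF,\ h(\bx_i)=0,\ i=1,\dots,\widetilde m\}$. More directly, if $f\in\cF$ and $\widetilde f\in\cF_{\mathrm{data}(f)}$, then $S_k^*(f)=S_k^*(\widetilde f)$ since the interpolation operator only uses point values at grid points, so $\|f-\widetilde f\|_X \le \|f-S_k^*(f)\|_X + \|S_k^*(\widetilde f)-\widetilde f\|_X \le 2\sup_{h\in\cF}\|h-S_k^*(h)\|_X$. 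Thus $S_k^*(f)$ is a legitimate recovery map and $R^*(\cF,G_{k,r})_X \lesssim \sup_{h\in\cF}\|h-S_k^*(h)\|_X$. Now I invoke the approximation theorems already proved: Theorem \ref{T:interapprox} gives the $L_\tau$ rate $2^{-k(s-d/p+d/\tau)}$, Theorem \ref{L:cont} gives the $C(\Omega)$ rate (taking $\tau\to\infty$), and Theorem \ref{T:SkH1} gives the $H^1$ rate $2^{-k(s-1-d/p+d/2)}$. Converting $2^{-k}\asymp \widetilde m^{-1/d}$ (since $\widetilde m=(r2^k)^d$) yields the stated exponents $\alpha_C,\alpha_\tau,\alpha_H$. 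For $H^{-1}(\Omega)$ I use the Sobolev embedding $L_\delta(\Omega)\hookrightarrow H^{-1}(\Omega)$ with $\tfrac1\delta=\tfrac12+\tfrac1d$ when $d\ge 3$ (valid since then $\delta=\tfrac{2d}{d+2}\ge 1$); this reduces the $H^{-1}$ bound to the $L_\delta$ bound, giving $\alpha_{-1}$. When $d=2$ and $p\le 1$ the embedding $L_1 \hookrightarrow H^{-1}$ fails, and the extra $\log(\widetilde m)$ factor arises from a Besov-space endpoint correction — this is exactly the ``small discrepancy'' mentioned in the text, and it will require using $L_{1,\infty}$ (weak-$L_1$) or a summation-over-scales argument with a logarithmic loss.

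\textbf{Lower bounds.} For the lower bound $R^*(\cF,\cX)_X \gtrsim \widetilde m^{-\alpha_X}$ for arbitrary data sites $\cX$ with $|\cX|=\widetilde m$, I use the standard bump-function construction: since $\cF_{\mathrm{data}(0)}$ always contains the zero data, it suffices to exhibit a nonzero $h\in\cF$ (up to normalization) vanishing at all of $\cX$ with $\|h\|_X \gtrsim \widetilde m^{-\alpha_X}$; then $\mathrm{rad}(\cF_{\mathrm{data}(0)})_X \ge \tfrac12\|h\|_X$ by considering $\pm h$. To build $h$: partition $\overline\Omega$ into $\asymp 2\widetilde m$ congruent subcubes of side $\asymp \widetilde m^{-1/d}$; by pigeonhole at least $\widetilde m$ of them contain no point of $\cX$; place a scaled, smooth bump $\phi$ supported in one such empty cube, scaled so its $B_q^s(L_p)$-norm equals $1$, which forces amplitude $\asymp \widetilde m^{-s/d+1/p}$ and hence $L_\tau$-norm on that cube $\asymp \widetilde m^{-s/d+1/p-1/\tau}$ (or take a sum of $\asymp\widetilde m$ such bumps across all empty cubes, with appropriate $\ell_q$ normalization, to handle the $[\cdot]_+$ truncation in $\alpha_\tau$ by choosing whether one bump or many is extremal). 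The $C(\Omega)$, $H^1$, and $H^{-1}$ lower bounds follow by the same construction with the norm of the single bump computed by scaling; for $H^{-1}$ one computes $\|\phi_{\text{scaled}}\|_{H^{-1}}$ directly by duality against $H^1_0$ test functions, recovering $\alpha_{-1}$.

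\textbf{Main obstacle.} The routine parts are the scaling computations for the bumps and the invocation of the already-proved approximation theorems. The genuinely delicate point is the $d=2$, $0<p\le 1$ case for $X=H^{-1}(\Omega)$: here the clean chain ``$L_\delta\hookrightarrow H^{-1}$, reduce to $L_\delta$ recovery'' breaks because $\delta=1$ is the endpoint and $L_1\not\hookrightarrow H^{-1}$ in two dimensions. I expect to handle the upper bound by a dyadic decomposition of $f-S_k^*(f)$ into scales, estimating each scale's $H^{-1}$-norm via its $L_1$-norm times a logarithmically-growing embedding constant (or using a Lorentz-space refinement $B^0_{1,1}\hookrightarrow H^{-1}$ and Hölder-summing over $O(\log\widetilde m)$ scales), which produces precisely the $\log(\widetilde m)$ factor in \eref{TF3-d2}; the matching lower bound without the log is the single-bump construction as above. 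Getting the logarithmic factor to be sharp (or at least consistent on both sides up to the log) is where the careful bookkeeping lies, and it is why the theorem states an inequality rather than an equivalence in that regime.
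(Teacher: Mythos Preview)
Your proposal is correct and follows essentially the same approach as the paper: upper bounds via the interpolant $S_k^*$ combined with Theorems \ref{T:interapprox}, \ref{T:SkH1}, and the Sobolev embedding $L_\delta\hookrightarrow H^{-1}$; lower bounds via bump functions on empty cubes, using a single bump when $1/p-1/\tau\ge 0$ (respectively $1/p-1/\delta\ge 0$) and a sum of $\asymp\widetilde m$ bumps otherwise, with the $H^{-1}$ lower bound computed by pairing against an explicit $H_0^1$ test function.

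The one place your route diverges is the upper bound for $X=H^{-1}(\Omega)$ when $d=2$ and $0<p\le 1$. The paper does not use a multi-scale decomposition of $f-S_k^*(f)$ or a Lorentz/Besov endpoint embedding. Instead it proves directly (Lemma \ref{L:Lqembed}) that for every $\tau>1$,
\[
\|h\|_{H^{-1}(\Omega)} \le \frac{C}{\tau-1}\,\|h\|_{L_\tau(\Omega)},
\]
applies this to $h=f-S_k^*(f)$ together with the already-known $L_\tau$ rate $\widetilde m^{-\alpha_{-1}}\widetilde m^{\,1-1/\tau}$, and then optimizes in $\tau$ by setting $\tau=1+(\log\widetilde m)^{-1}$, which yields exactly the $\log(\widetilde m)$ factor. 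This is a one-line optimization rather than a summation over scales; your suggested approaches would likely also work but are more laborious, and the Besov-endpoint route $B^0_{1,1}\hookrightarrow H^{-1}$ would require an additional estimate on $\|f-S_k^*(f)\|_{B^0_{1,1}}$ that the paper never needs.
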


We remark that in the case $d = 2$ and $0<p\le 1$, the logarithm appears because of the failure  of the Sobolev embedding to provide $L_1(\Omega)\subset H^{-1}(\Omega)$.  However, it is actually true that the Hardy space $\cH^1(\Omega)$ embeds into $H^{-1}(\Omega)$. This could potentially be used to improve our later analysis, but for the sake of simplicity we do not pursue this here.

We make some further observations that will help explain the above
theorem, especially what is happening for recovery in $H^{-1}(\Omega)$.  Notice
that under the model class assumption $f\in \cF=U(B_q^s(L_p(\Omega)))$ with $s>d/p$,
the OR for this class is the same whether we measure error in $H^{-1}(\Omega)$
or in $L_{\delta}(\Omega)$ 
when $d\geq 3$ or $d=2$, $1<p\leq \infty$.  We also have that
\be 
\label{H-1embed}
\|f\|_{H^{-1}(\Omega)}\le 
\begin{cases}
  C(d)\|f\|_{L_{\delta}(\Omega)},
  \quad d\geq 3,\quad \hbox{where}\,\,\delta=\frac{2d}{d+2},\\ \\
  \frac{C}{\tau-1}\|f\|_{L_\tau(\Omega)}, \quad \,\, d=2, \quad \tau>1,
\end{cases}
\ee 
with $C$ an absolute constant, as  discussed in Lemma \ref{L:Lqembed} of the appendix .  These two facts explain the form of the loss function $\cL^*$ which we  discuss in detail later.

 \subsection{Optimal recovery of $g$}
\label{SS:ORg}

In this section, we study the optimal recovery rate of   functions $g\in \cG$ in the norm of $H^{1/2}(\partial\Omega)$.  Let $Tr=Tr_{\partial \Omega}$ be the trace operator onto $\partial\Omega$ and let $\overline \cB:=B_{\bar q}^{\bar s}(L_{\bar p}(\Omega))$, where we fix $\bar s,\bar p,\bar q$ and assume that $ \overline s> d/\overline p$ so that we are sure that
the functions in $\overline \cB$ are continuous on $\overline\Omega$.  We consider the case $0<\bar p\le 2$,
 which together with the restriction $d\ge 2$ implies that $\bar s-d(1/\bar p-1/2)>1$, and thus the unit ball of $\overline \cB$ compactly embeds  into $H^1(\Omega)$.

 We define the model class 
\be 
\label{defcG}
\cG:=\{g:\,g=Tr(v):\ \|v\|_{\overline \cB}\le 1\},
\ee 
and use the trace norm definition of
$\|\cdot\|_{H^{1/2}(\partial\Omega)}$ throughout this section.

Recall the tensor product
grid $G_{k,r}$ for $\overline\Omega=[0,1]^d$, see \eref{Gd}.  We let 
\be\label{Gb}
\overline G_{k,r}:=\{\bz_i:\ i=1,\dots,\overline{m}\}=G_{k,r}\cap \partial \Omega
\ee
denote the set of those grid points of $G_{k,r}$ that are on 
the boundary $\partial\Omega$.  We will use the points 
$\cZ=\overline G_{k,r}$ to recover $g$.  Note that the number $ \overline m$ of data sites in $\overline G_{k,r}$ is
\be 
 \overline{m}=\#(\overline  G_{k,r})  =2d[r2^k]^{d-1}\asymp 2^{k(d-1)},
\ee 
with constants of equivalency depending only on $r$ and $d$.

Given $g\in \cG$, 
we define   the continuous piecewise polynomial
\be 
\label{defSkbar}
\overline S_k(g):= \sum_{i=1}^{\overline{m}} g(\bz_i) \overline\phi_i,
\ee
where each $\overline\phi_i$ is the trace  of the Lagrange element 
$\phi_i$ centered at $\bz_i\in \overline G_{k,r}$.
Note that if  $v\in \overline \cB$ is any function whose trace on $\partial\Omega$ is $g$, and 
 $S_k^*(v)$
is its  continuous piecewise polynomial Lagrange interpolant at the grid points $G_{k,r}$, see \eref{pwpinter1},
then we have 
\be
\label{need}
Tr(S_k^*(v))=\overline S_k(g).
\ee
This follows from the fact that  each Lagrange interpolant 
$\phi_i$ centered at $\bx_i\in G_{k,r}\setminus \overline G_{k,r}$ vanishes at the faces of the simplex from the corresponding Kuhn-Tucker decomposition   that contains $\bx_i$. Note that $\overline S_k(g)$ does not depend on $v$.   According to Theorem \ref{T:SkH1}, $S_k^*(v)\in H^1(\Omega)$ 
since $v$ belongs to unit ball of $\overline \cB$. Then, \eref{need}  gives that  $S_k(g)\in H^{1/2}(\partial \Omega)$.

The proof of the following result is provided in the Appendix, \S\ref{T3.2}.

\begin{theorem}
\label{T:ORg}
Let $\overline \cB=B_{\overline q}^{\overline s}(L_{\overline p}(\Omega))$ with $\overline s> d/\bar p$, $0<\overline p\le 2$, $0<\overline q\le \infty$, and  let $\cG=\{Tr(v):\|v\|_{\overline\cB}\le 1\}$.   Then  the optimal recovery rates of the model class $\cG$ in  $H^{1/2}(\partial\Omega)$ is 
\be 
\label{ORGrate}
R_{\overline m}^*(\cG)_{H^{1/2}(\partial\Omega)}\asymp \overline{m}^{- \beta}, \quad \overline{m}\ge 1, \quad \hbox{where}\quad 
\beta:=
\frac{\overline s-1}{d-1}-\frac{d}{d-1}\left [\frac{1}{\bar p}-\frac{1}{2}\right],
\ee
with constants of equivalency independent of $\overline m$.  
Moreover,  when $\overline m=\#(\overline G_{k,r})$, the upper bound in \eref{ORGrate} is achieved  when we take the data sites  
$\cZ=\overline G_{k,r}$, provided $r>\max(s,1)$  and the approximant to $g$ is given by
 the function $\overline S_k(g)$.
\end{theorem}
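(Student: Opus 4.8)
The proof splits into a matching upper bound, realized by the interpolant $\overline S_k$, and a lower bound valid for arbitrary data sites, obtained by a bump‑function construction.

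\emph{Upper bound.} Let $g\in\cG$, say $g=Tr(v)$ with $\|v\|_{\overline\cB}\le 1$. The trace‑norm definition \eref{tH1/2} together with the identity \eref{need}, which gives $Tr\bigl(v-S_k^*(v)\bigr)=g-\overline S_k(g)$, yields
\be
\|g-\overline S_k(g)\|_{H^{1/2}(\partial\Omega)}\ \le\ \|v-S_k^*(v)\|_{H^1(\Omega)} .
\ee
Since $\bar s>d/\bar p$ and $0<\bar p\le 2$, Theorem \ref{T:SkH1} applies with $(s,p,q)=(\bar s,\bar p,\bar q)$, and using $\|v\|_{\overline\cB}\le 1$ together with $\overline m\asymp 2^{k(d-1)}$ we obtain
\be
\|g-\overline S_k(g)\|_{H^{1/2}(\partial\Omega)}\ \le\ C\,2^{-k(\bar s-1-d/\bar p+d/2)}\ \asymp\ \overline m^{-\beta},
\ee
because $\beta=(\bar s-1-d/\bar p+d/2)/(d-1)$. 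As $\overline S_k(g)$ is built only from the data $(g(\bz_i))$ at $\cZ=\overline G_{k,r}$, this proves $R^*(\cG,\overline G_{k,r})_{H^{1/2}(\partial\Omega)}\lesssim\overline m^{-\beta}$ whenever $\overline m=\#(\overline G_{k,r})$; by monotonicity of $\overline m\mapsto R_{\overline m}^*(\cG)$ and the fact that for every $\overline m$ one can choose $k$ with $\#(\overline G_{k,r})\asymp\overline m$, we get $R_{\overline m}^*(\cG)_{H^{1/2}(\partial\Omega)}\lesssim\overline m^{-\beta}$ for all $\overline m\ge1$.

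\emph{Lower bound.} Fix data sites $\cZ\subset\partial\Omega$, $|\cZ|=\overline m$, and take the admissible target $g\equiv 0$, so the data vanishes and $\cG_{{\rm data}(0)}=\{\widetilde g\in\cG:\widetilde g(\bz_i)=0,\ i=1,\dots,\overline m\}$. It suffices to exhibit $g_0\in\cG$ with $g_0(\bz_i)=0$ for all $i$ and $\|g_0\|_{H^{1/2}(\partial\Omega)}\gtrsim\overline m^{-\beta}$: then $\pm g_0\in\cG_{{\rm data}(0)}$, so for any center $w$ one has $\max\{\|w-g_0\|,\|w+g_0\|\}\ge\|g_0\|$, whence $R(\cG_{{\rm data}(0)})_{H^{1/2}(\partial\Omega)}\ge\|g_0\|_{H^{1/2}(\partial\Omega)}\gtrsim\overline m^{-\beta}$, and therefore $R^*(\cG,\cZ)_{H^{1/2}(\partial\Omega)}\gtrsim\overline m^{-\beta}$ uniformly in $\cZ$. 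To build $g_0$, fix one face $F$ of $\partial\Omega$, partition it into $\asymp\overline m$ congruent $(d-1)$‑dimensional subcubes of side $h\asymp\overline m^{-1/(d-1)}$, discard the $O(\overline m^{(d-2)/(d-1)})$ of them meeting $\partial F$, and use pigeonhole to pick one remaining subcube $Q$ containing no point of $\cZ$. With local coordinates $x=(x',x_d)$ near $F$ (so $F=\{x_d=0\}$), let $\Psi\in C^\infty$ be supported in the open unit $(d-1)$‑cube and $\theta\in C^\infty$ with $\theta(0)=1$, $\supp\theta\subset[0,1]$, and set
\be
v(x):=\lambda\,\Psi\!\left(\tfrac{x'-x'_Q}{h}\right)\theta\!\left(\tfrac{x_d}{h}\right)
\ee
in this box and $v\equiv0$ elsewhere; since $Q$ avoids $\partial F$, $v$ vanishes on the other faces. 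A standard bump estimate (using $r>\bar s$ in the Besov integral \eref{Bsemi}) gives $\|v\|_{\overline\cB}\asymp\lambda\,h^{d/\bar p-\bar s}$, so the choice $\lambda\asymp h^{\bar s-d/\bar p}$ puts $v$ in the unit ball of $\overline\cB$ and hence $g_0:=Tr(v)\in\cG$. Finally $g_0$ is supported in $\overline Q\subset F$, so $g_0(\bz_i)=0$ for all $i$, and a scaling computation of the intrinsic seminorm \eref{inorm}, whose kernel $|\bz-\bz'|^{-d}$ carries exactly the $H^{1/2}$‑homogeneity on a $(d-1)$‑dimensional face, gives $\|g_0\|_{H^{1/2}(\partial\Omega)}\asymp\lambda\,h^{d/2-1}\asymp h^{\bar s-1-d/\bar p+d/2}\asymp\overline m^{-\beta}$, using the equivalence of the intrinsic and trace norms.

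\emph{Main obstacle.} The routine but most delicate points are the two scaling estimates for the bump: verifying $\|v\|_{\overline\cB}\asymp\lambda\,h^{d/\bar p-\bar s}$ (checking that the fine index $\bar q$ does not affect the exponent and that $r>\bar s$ makes the small‑$t$ part of \eref{Bsemi} converge at the right rate), and $\|g_0\|_{H^{1/2}(\partial\Omega)}\asymp\lambda\,h^{d/2-1}$ from the intrinsic double integral restricted to $F$. One should also record that the hypotheses $\bar s>d/\bar p$, $0<\bar p\le 2$ and $d\ge2$ force $\beta\ge0$ (indeed $\bar s-1-d/\bar p+d/2>d/2-1\ge0$), so that the stated rate is genuinely decaying.
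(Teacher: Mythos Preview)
Your proof is correct and follows essentially the same route as the paper. The upper bound is identical: trace norm plus \eref{need} reduces the boundary error to the $H^1(\Omega)$ interpolation error of Theorem~\ref{T:SkH1}, and the exponent arithmetic matches. For the lower bound, both arguments pigeonhole to find an empty $(d-1)$-cube on a face and plant a bump whose trace is supported there; the paper builds the bump by translating the scaled tensor-product function $\varphi_J$ of \eref{dilate} by half a sidelength in the normal direction (so its midplane slice lands on the face), whereas you write the bump directly as a product $\Psi((x'-x'_Q)/h)\,\theta(x_d/h)$ and discard the $O(\overline m^{(d-2)/(d-1)})$ subcubes meeting $\partial F$ to keep the support off the other faces. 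Both routes yield the same scaling $\|v\|_{\overline\cB}\asymp\lambda\,h^{d/\bar p-\bar s}$ and $\|Tr(v)\|_{H^{1/2}(\partial\Omega)}\asymp\lambda\,h^{d/2-1}$, hence the same $\overline m^{-\beta}$; your version is slightly more self-contained, while the paper's reuses the bump machinery already set up for Theorem~\ref{T:ORf}.
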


 To understand the exponent $\beta$ in \eref{ORGrate} better, let $v$ be a function in $ U(B_q^{\bar s}(L_{\bar p}(\Omega)))$ whose trace is $g$, and rewrite  $\beta$ as
 $$
 \beta=\frac{\overline s-1-(\frac{d}{\bar p}-\frac{d}{2})}{d-1}.
 $$
Here the numerator is the excess regularity of $v$ in $H^1(\Omega)$ and thereby corresponds to the excess regularity of $g$ in $H^{1/2}(\partial\Omega)$, and the denominator is $d-1$ since 
 $\partial\Omega$ has dimension $d-1$.

\subsection{Optimal recovery of $u$}
\label{SS:Ou}
 We turn now to determining the optimal $H^1(\Omega)$ recovery rate for  the functions $u$ in the  model class $\cU$ from data
 \be\label{dataU}
 -\Delta u(\bx_i)=f_i, \quad \bx_i\in\cX,\ i=1,\dots, \widetilde m; \quad u(\bz_i)= g_i, \quad \bz_i\in \cZ,\ i=1,\dots \overline m, 
 \ee
 under a given total budget $m=\widetilde m+\overline m$ of data observations.
Recall that $\cF:=U(B_q^s(L_p(\Omega)))$,  where $s>d/p$
 and $0<q,p\le\infty$,  $\cG:= Tr(U(B_{\overline q}^{\overline s}(L_{\overline p}(\Omega)))$, where $\overline s> \frac{d}{\bar p}\geq 1$ since $d\geq 2$ and $0<\overline p\le 2$, 
 $0<\overline q\leq \infty$, and
 \be 
\nonumber
 \cU:= \{u \in C(\overline \Omega)\ : \  u \ {\rm satisfies} \ \eref{main}\ {\rm with} \ f\in\cF,g\in\cG\}.
 \ee 
    The following theorem
 gives the optimal recovery rate for $\cU$.

 \begin{theorem}
     \label{T:ORu}
Let $\Omega=(0,1)^d$  and let $\cF$, $\cG$,  $\cU$ be as above.  Then the following holds for the optimal recovery rate $R^*_m(\cU)_{H^1(\Omega)}$:
  \begin{itemize}
          \item if $d \geq 3$, or $d = 2$ and $p > 1$, we have
      \be
\label{TORu}
R^*_m(\cU)_{H^1(\Omega)}\asymp m^{-\min(\alpha,\beta)}, \quad m\ge 2,
\ee 
      \item if $d = 2$ and $0<p\leq 1$, we have
    \be 
      \label{TORu-d2}
 m^{-\min(\alpha,\beta)}\lesssim R^*_m(\cU)_{H^1(\Omega)}\lesssim \log(m)m^{-  \alpha}+ m^{-  \beta},\quad \ m\ge 2,
      \ee       
    \end{itemize}  
where 
\be\label{ab}
\alpha= \frac{s}{d}-\left[\frac{1}{p}-
\left (\frac{1}{2}+\frac{1}{d}\right)\right]_+ ,  \quad  \beta=\frac{\overline s-1-(\frac{d}{\bar p}-\frac{d}{2})}{d-1},
\ee 
and all constants of equivalence are independent of $m$.
\end{theorem}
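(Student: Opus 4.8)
The plan is to exploit the norm equivalence \eqref{perturb}: recovering $u$ in $H^1(\Omega)$ from the data \eqref{dataU} decouples into recovering $f$ in $H^{-1}(\Omega)$ from the samples $(f_i)$ and recovering $g$ in $H^{1/2}(\partial\Omega)$ from the samples $(g_i)$. The exponents $\alpha$ and $\beta$ in \eqref{ab} are precisely the rate $\alpha_{-1}$ from Theorem~\ref{T:ORf}(iv) and the rate $\beta$ from Theorem~\ref{T:ORg}, so the theorem will follow by transferring those two results through \eqref{perturb}.

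\textbf{Upper bound.} I would split the budget $m\ge 2$ between two tensor--product grids: pick $k_1$ with $\widetilde m:=(r2^{k_1})^d\asymp m$ and $k_2$ with $\overline m:=\#(\overline G_{k_2,r})\asymp 2^{k_2(d-1)}\asymp m$, arranged so $\widetilde m+\overline m\le m$ (dyadic coarseness affects only constants, and leftover budget is spent on ignored extra samples). With $\cX=G_{k_1,r}$ and $\cZ=\overline G_{k_2,r}$, one forms from the samples alone the continuous piecewise polynomials $\widehat f:=S^*_{k_1}(f)\in L_2(\Omega)\subset H^{-1}(\Omega)$ and $\widehat g:=\overline S_{k_2}(g)\in H^{1/2}(\partial\Omega)$ (the latter by \eqref{need} and Theorem~\ref{T:SkH1}), and then lets $\widehat u\in H^1(\Omega)$ be the unique solution of $-\Delta\widehat u=\widehat f$ in $\Omega$, $\widehat u=\widehat g$ on $\partial\Omega$. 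Subtracting the two boundary value problems and applying \eqref{perturb} to the difference yields
$$
\|u-\widehat u\|_{H^1(\Omega)}\le C\bigl(\|f-\widehat f\|_{H^{-1}(\Omega)}+\|g-\widehat g\|_{H^{1/2}(\partial\Omega)}\bigr),
$$
and Theorems~\ref{T:ORf}(iv) and~\ref{T:ORg} bound the two terms, uniformly over $f\in\cF$ and $g\in\cG$, by $\widetilde m^{-\alpha}$ (resp.\ $\log(\widetilde m)\,\widetilde m^{-\alpha}$ when $d=2$, $0<p\le1$) and by $\overline m^{-\beta}$. Since $\widehat u$ depends only on the data, this bounds $R(\cU_{{\rm data}(u)})_{H^1(\Omega)}$ by the same quantity for every $u\in\cU$, giving $R^*(\cU,\cX,\cZ)_{H^1(\Omega)}\lesssim m^{-\alpha}+m^{-\beta}\asymp m^{-\min(\alpha,\beta)}$ (resp.\ $\lesssim\log(m)m^{-\alpha}+m^{-\beta}$); the upper bounds in \eqref{TORu}, \eqref{TORu-d2} follow after passing to the infimum over configurations, using monotonicity of $R^*_m$ in $m$ to absorb $\widetilde m+\overline m\le m$.

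\textbf{Lower bound.} Fix any configuration $(\cX,\cZ)$ with $|\cX|=\widetilde m$, $|\cZ|=\overline m$, $\widetilde m+\overline m=m$. Restricting the adversary to the subfamily of $\cU$ with $g\equiv0$, the map $f\mapsto u$ is (by \eqref{perturb} with $g=0$, which gives $\|\cdot\|_{H^1}\asymp\|\cdot\|_{H^{-1}}$ on differences) a norm isomorphism of $\cF\subset H^{-1}(\Omega)$ onto this subfamily in $H^1(\Omega)$, and the only information in \eqref{dataU} is the $\widetilde m$ values $-\Delta u(\bx_i)=f(\bx_i)$. The lower bound in Theorem~\ref{T:ORf}(iv) then produces $f_1,f_2\in\cF$ agreeing on $\cX$ with $\|f_1-f_2\|_{H^{-1}(\Omega)}\gtrsim\widetilde m^{-\alpha}$ (the degenerate case $\widetilde m=0$ trivially giving separation $\gtrsim1$); their solutions $u_1,u_2\in\cU$ share the data of $u_1$, and $\|u_1-u_2\|_{H^1(\Omega)}\gtrsim\widetilde m^{-\alpha}$, whence $R^*(\cU,\cX,\cZ)_{H^1(\Omega)}\gtrsim\widetilde m^{-\alpha}$. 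Freezing instead $f\equiv0$ and invoking Theorem~\ref{T:ORg} gives symmetrically $R^*(\cU,\cX,\cZ)_{H^1(\Omega)}\gtrsim\overline m^{-\beta}$. Since $\widetilde m\le m$ and $\overline m\le m$, both bounds yield $R^*(\cU,\cX,\cZ)_{H^1(\Omega)}\gtrsim\max(m^{-\alpha},m^{-\beta})=m^{-\min(\alpha,\beta)}$, uniformly in the configuration, which gives the lower bounds in \eqref{TORu}, \eqref{TORu-d2}.

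\textbf{Where the work is.} Conceptually everything is forced once Theorems~\ref{T:ORf} and~\ref{T:ORg} are in hand; the steps needing attention are: confirming $\widehat f\in H^{-1}(\Omega)$ and $\widehat g\in H^{1/2}(\partial\Omega)$ so that \eqref{perturb} legitimately applies to the error system; checking that the two--grid budget split can realize $\widetilde m\asymp\overline m\asymp m$ simultaneously (since $\partial\Omega$ has dimension $d-1$, the two grids carry different dyadic levels); and tracking the logarithmic factor, which enters \emph{only} through the $H^{-1}$ recovery of $f$ in the case $d=2$, $0<p\le1$ and is absent from the lower bound, which is precisely why \eqref{TORu-d2} has a gap while \eqref{TORu} does not.
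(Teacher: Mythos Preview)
Your proposal is correct and follows essentially the same route as the paper. Both arguments rest on the decoupling induced by \eqref{perturb}: the paper first records the two-sided equivalence $R^*(\cU,\cX,\cZ)_{H^1}\asymp R^*(\cF,\cX)_{H^{-1}}+R^*(\cG,\cZ)_{H^{1/2}}$ for fixed sites (its equation \eqref{eq}) and then reads off both bounds from Theorems~\ref{T:ORf} and~\ref{T:ORg}, while you establish the upper bound by the explicit construction of $\widehat u$ (which is exactly the content of the paper's Lemma~\ref{Lgeneral}, proved just after this theorem) and the lower bound by restricting the adversary to $g\equiv0$ or $f\equiv0$; the latter is precisely how one verifies the $\gtrsim$ direction of \eqref{eq}.
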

\begin{proof}
    Let us fix the data cites $\cX,\cZ$ with $|\cX|=\widetilde m$,
 $|\cZ|=\overline m$, $m=\overline m+\widetilde m$, and let $u_1,u_2\in \cU_{\rm data}$ with corresponding $f_1,f_2\in \cF_{\rm data}$ and $g_1,g_2\in \cG_{\rm data}$. It follows from \eref{perturb} that 
 $$
 \|u_1-u_2\|_{H^1(\Omega)}\asymp 
 \|f_1-f_2\|_{H^{-1}(\Omega)}+\|g_1-g_2\|
_{H^{1/2}(\Omega)},
 $$
 and therefore
\be\label{eq}
 R^*(\cU,\cX,\cZ)_{H^1(\Omega)}\asymp R^*(\cF,\cX)_{H^{-1}(\Omega)}+
 R^*(\cG,\cZ)_{H^{1/2}(\Omega)}.
\ee

 Clearly, if $\overline m=\widetilde m\asymp m/2$ and using  the rates from 
 Theorem \ref{T:ORf} and Theorem \ref{T:ORg}, we have that 
 $$
 R^*_m(\cU)_{H^1(\Omega)}\lesssim 
 \begin{cases}
     m^{-\alpha}+m^{-\beta}\lesssim m^{-\min(\alpha,\beta)},\quad d\geq 3, \,\hbox{or}\,\,d=2,\,p>1,\\ \\
\log(m)m^{-\alpha}+m^{-\beta}, \,\,\quad\quad\quad \quad d=2,\,0<p\leq 1.
     \end{cases}
 $$
 On the other hand, using the same theorems and \eref{eq}, we obtain
\begin{eqnarray*}
 m^{-\min(\alpha,\beta)}\lesssim\inf_{m=\overline m+\widetilde m}(\widetilde m^{-\alpha}+\overline m^{-\beta})&\lesssim& \inf_{m=\overline m+\widetilde m}\left (R_{\widetilde m}^*(\cF)_{H^{-1}(\Omega)}+
 R_{\overline m}^*(\cG)_{H^{1/2}(\Omega)} \right )\\
 &\lesssim &
 \inf_{m=|\cX|+|\cZ|}
 R^*(\cF,\cX)_{H^{-1}(\Omega)}+
 R^*(\cG,\cZ)_{H^{1/2}(\Omega)}
 \\
 &\lesssim &\inf_{m=|\cX|+|\cZ|} R^*(\cU,\cX,\cZ)_{H^1(\Omega)}=R^*_m(\cU)_{H^1(\Omega)},
\end{eqnarray*}
 and the proof is completed.
 \end{proof}

 \begin{remark}
 \label{RR}
Consider  $\Omega=(0,1)^d$  and  $\cF$, $\cG$,  $\cU$  as above.  
Let us assume that we have a fixed budget of $\widetilde m$ data sites $\cX$ from $\overline \Omega$ and a fixed number $\overline m$ of data sites $\cZ$ on the boundary $\partial \Omega$ and  denote by 
 $$
R_{\widetilde m,\overline m}^*(\cU)_{H^1(\Omega)}:=\inf_{
   \cX\subset\overline\Omega, \,\cZ\subset \partial \Omega,\, |\cX|=\widetilde m, \,|\cZ|=\overline m} R^*(\cU,\cX,\cZ)_{H^1(\Omega)}.
   $$  
 It follows from \eref{eq} that 
 \be
 \label{that}
R_{\widetilde m}^*(\cF)_{H^{-1}(\Omega)}+
R^*_{\overline m}(\cG)_{H^{1/2}(\partial \Omega)}
\asymp
R_{\widetilde m,\overline m}^*(\cU)_{H^1(\Omega)}.
\ee
Theorem \ref{T:ORf} and Theorem \ref{T:ORg} then yield
\be
\label{mainco}
\widetilde m^{-\alpha}+\overline m^{-\beta}
\lesssim R_{\widetilde m,\overline m}^*(\cU)_{H^1(\Omega)}\lesssim
\begin{cases}
\log(\widetilde m)\widetilde m^{-\alpha}+\overline m^{-\beta}, \quad d=2, \,0<p\leq 1,\\
\widetilde m^{-\alpha}+\overline m^{-\beta}, \quad \quad\quad\quad d=2, \,1<p\leq \infty\quad \hbox{or}\quad d\geq 3,
    \end{cases}
\ee
where $\alpha,\beta$ are as in \eref{ab}.

Now suppose instead that we are given a fixed total budget $m$ and $d\geq 2$. Then we can obtain the  optimal rate $R^*_m(\cU)_{H^1(\Omega)}$  by choosing $\widetilde m\asymp \overline m\asymp m/2$. 
\end{remark}

 \subsection{Final observations on optimal recovery}
 \label{SS:finalobservation}
 This section gave the optimal recovery rates for the model classes $\cF,\cG,\cU$ from 
 $\widetilde m,\overline m,m$, data observations, respectively.  While we began by considering 
  general model class assumptions $\cF$ and $\cG$ for $f$ and
 $g$ to be the unit balls of Besov spaces, we found
 that the optimal recovery rate is the same for many of these model classes.  This means that some of the model classes are superflous in that they are contained in a larger model class with the same recovery rate.   It makes sense only to use these largest model classes in going forward.

   Consider, for example,
 the model classes $\cF=U(\cB)$ with $\cB=B_q^s(L_p(\Omega))$ for $f$.  We found that 
  in the case $d\geq 3$ and $0<p\leq \infty$,  or $d=2$ and $1<p\leq \infty$, the optimal recovery rate of $\cF$ in $H^{-1}(\Omega)$
 is the same as its optimal recovery rate in $L_{\delta}(\Omega)$ with ${\delta}=\frac{2d}{d+2}$.  Moreover,   the model class $\cF$  has an optimal recovery rate $m^{-\alpha/d}$ in $H^{-1}(\Omega)$  if and only if it is contained in $U(B_\infty^{\alpha}(L_\delta(\Omega)))$ and, in addition, $\cF$  embeds into $C(\overline\Omega)$.

 \vskip .1in
 \noindent
 {\bf Largest Model Classes for $f$:} 
 
In the case 
$d\geq 3$, $0<p\leq \infty$ or $d=2$, 
$1<p\leq \infty$ the class
 \be 
 \label{onlyF}
 \cF:= U(\cB),\quad \cB=B_\infty^s(L_p(\Omega)), \ {\rm with} \  p\ge {\delta}  \ {\rm and}\  s> \frac{d}{p}
 \ee 
 has the optimal recovery rate 
 \be
 \label{ORspecialF}
 R_{m}^*(\cF)_{H^{-1}(\Omega)}\asymp { m}^{-s/d}, \quad  m\ge 1,
 \ee
 and any Besov model class that gives the optimal recovery rate $O(m^{-s/d})$ is contained in one of these  model classes.  In each of these cases, the number $s$ represents the   smoothness of $f\in\cF$ in $L_{\delta}(\Omega)$.  These largest model classes are pictured in Figure \ref{F:devore-diagram}.

In the case $d=2$, $0<p\leq 1$, the largest class
 \be 
 \label{onlyF1}
 \cF:= U(\cB),\quad \cB=B_\infty^s(L_1(\Omega)), \ {\rm with} \    s> 2,
 \ee 
 has  recovery rate 
 \be
 \label{ORspecialF1}
 { m}^{-s/2}\lesssim R_{m}^*(\cF)_{H^{-1}(\Omega)}\lesssim (1+\log(m)){ m}^{-s/2}, \quad  m\ge 1.
 \ee
 \vskip .1in
 \noindent 
 {\bf Largest  Model Classes for $g$:} We have considered the  model classes $\cG=Tr(B^{\bar s}_\infty(L_{\bar p}(\Omega))$ with $0<\bar p\leq 2$, $d\geq 2$, and $\bar s> \max \left(\frac{d}{\bar p},1\right)$.  Thus, the model classes
 which give a  rate $O(\overline  m^{-\alpha})$ will all be contained in one model class
 \be 
 \label{largestG}
 \cG:=Tr(U(\overline \cB)), \quad \overline\cB=B_\infty^{\bar s}(L_{2}(\Omega)),   \quad   \ {\rm with} \ \bar s> d/2.    
 \ee
 This class has optimal recovery rate  in $H^{1/2}(\partial\Omega)$ 
 \be
 \label{ORspecialG}
 R_{m}^*(\cG)_{H^{1/2}( \partial \Omega)}\asymp m^{-(\bar s-1)/(d-1)}, \quad m\ge 1.
 \ee
Moreover, any Besov model class that gives this recovery rate is contained in one of
these largest model classes.  For each of these largest model classes $\bar s -1$ is the excess smoothness of $v$ in $H^1(\Omega)$ and also the excess smoothness of $g=Tr(v)$ in $H^{1/2}(\Omega)$.  
 \begin{remark}
     \label{R:largest}
Because of the above remarks, in going further, we always take 
      $\cF$ as in \eref{onlyF} and $\cG$  as in \eref{largestG}.
 \end{remark}

\begin{figure}[ht!]
\begin{center}
\begin{tikzpicture}[scale =0.9]


\draw [thick,->]plot coordinates{(-0.1,2) (5.5,2)};
\draw [thick,->]plot coordinates{(0,1.9) (0,6.3)};

\draw  plot coordinates{(2,4)(4,6)};
\draw [very thick,dashed] plot coordinates{(0,2) (2,4)};
\draw [very thick] plot coordinates{(2,4) (2,6)};
\draw [very thick, loosely dotted] plot coordinates{(2,2) (2,6)};

\draw [->]plot coordinates{(0,2.6) (0.4,2.6)};
\draw [->]plot coordinates{(0,3.1) (0.9,3.1)};
\draw [->]plot coordinates{(0,3.6) (1.4,3.6)};
\draw [->]plot coordinates{(0,4.2) (1.9,4.2)};
\draw [->]plot coordinates{(0,4.7) (1.9,4.7)};
\draw [->]plot coordinates{(0,5.2) (1.9,5.2)};
\draw [->]plot coordinates{(0,5.7) (1.9,5.7)};

\draw [->]plot coordinates{(3.9,6) (2.1,4.2)};
\draw [->]plot coordinates{(3.4,6) (2.1,4.7)};
\draw [->]plot coordinates{(2.9,6) (2.1,5.2)};
\draw [->]plot coordinates{(2.4,6) (2.1,5.7)};

\coordinate [label= {s}] (z1) at (-0.5,6);
\coordinate [label= $1/\delta$] (z1) at (2,1.2);
\coordinate [label= $1/p$] (z1) at (3+2,1.2);
\coordinate [label= $0$] (z1) at (-0.25,1.75);
\coordinate [label= $s{=}d/p$] (z1) at (4.8,5.8);

\draw[fill=white](2,4) circle (0.1);

\draw [thick,->]plot coordinates{(-0.1+8,2) (5.5+8,2)};
\draw [thick,->]plot coordinates{(0+8,1.9) (0+8,6.3)};

\draw  plot coordinates{(2+8,4)(4+8,6)};
\draw [very thick] plot coordinates{(2+8,4) (2+8,6)};

\draw [->]plot coordinates{(3.9+8,6) (2.1+8,4.2)};
\draw [->]plot coordinates{(3.4+8,6) (2.1+8,4.7)};
\draw [->]plot coordinates{(2.9+8,6) (2.1+8,5.2)};
\draw [->]plot coordinates{(2.4+8,6) (2.1+8,5.7)};

\draw [very thick, loosely dotted] plot coordinates{(2+8,2) (2+8,4)};
\draw [very thick, loosely dotted] plot coordinates{(0+8,4) (2+8,4)};

\coordinate [label= {s}] (z1) at (-0.5+8,6);
\coordinate [label= $1/2$] (z1) at (2+8,1.2);
\coordinate [label= $1/p$] (z1) at (3+2+8,1.2);
\coordinate [label= $0$] (z1) at (-0.25+8,1.75);
\coordinate [label= $\frac d 2$] (z1) at (-0.35+8,3.6);
\coordinate [label= $s{=}d/p$] (z1) at (4.8+8,5.8);

\draw[fill=white](2+8,4) circle (0.1);

\end{tikzpicture}
\caption{The spaces $B^{s}_\infty(L_p(\Omega))$ are represented by the point $(1/p,s)$ in the first quadrant. The arrows indicate embedding with a  slope of $d$ for the oblique lines. In particular, the spaces above the line $s=d/p$ embed into $C(\overline{\Omega})$. (Left) The largest model classes for $f$ are the unit balls of spaces just above the thick and dashed segment or on the on the thick vertical half-line (excluding the point $(1/{\delta},d/{\delta})$). (Right) The largest model classes for $g$ are the traces of the unit balls of spaces on the thick vertical half-line (excluding the point $(1/2,d/2)$).}\label{F:devore-diagram}
\end{center}
\end{figure}

 \vskip .1in
 \noindent 
 {\bf  Model Classes for $u$:}
If we use one of the model classes 
 from \eref{onlyF} for $\cF$ and one of the 
model classes from \eref{largestG}  for $\cG$, we obtain a model class $\cU$ for $u$. The optimal
recovery rate for this model class is
$$
R_m^*(\cU)_{H^1(\Omega)}\asymp m^{-\min\{s/d,(\bar s-1)/(d-1)\}},\quad m\ge 2,
$$
which could be  obtained by assigning $\widetilde  m\asymp m/2$ points in $\overline \Omega$ and  $\overline m\asymp m/2$ points  on the boundary $\partial \Omega$.

\section{Numerical algorithms  based on linear approximation}
\label{S:NOR2}

Recall that we consider the case when  $\Omega=(0,1)^d$, $d\geq 2$,  $\cF:=U(\cB)$, where  $\cB=B_q^s(L_p(\Omega))$ with $0\le q,p\le \infty$ and $s>\frac{d}{p}$, and 
$\cG=\{Tr(v):\|v\|_{\overline\cB}\le 1\}$, where
$\overline \cB=B_{\overline q}^{\overline s}(L_{\overline p}(\Omega))$ with $\overline s> d/\bar p$, $0<\overline p\le 2$, $0<\overline q\le \infty$. For these right hand sides and boundary values, we consider the solution set  
\be 
\nonumber
 \cU:= \{u \in C(\overline \Omega)\ : \  u \ {\rm satisfies} \ \eref{main}\ {\rm with} \ f\in\cF,g\in\cG\}.
 \ee

The above analysis of optimal recovery tells us to what extent the information $({\bf f}, {\bf g})$ determines $u$. That is, the optimal recovery analysis  tells us that if two functions $u,u'\in \cU$ and  satisfy the same data, namely $u,u'\in \cU_{\rm data}$, then they are close in the $H^1(\Omega)$ norm.
More precisely, if $u^*$ is the center of a  Chebyshev ball $B(\cU_{\rm data})$, 
then
\be
\label{not}
\|u-u'\|_{H^1(\Omega)}\leq \|u-u^*\|_{H^1(\Omega)}+\|u^*-u'\|_{H^1(\Omega)}\leq 2R_m^*(\cU)_{H^1(\Omega)}.
\ee
However, our analysis does not give a numerical algorithm that takes the data and creates an approximation $\widehat u$ to $u$  with accuracy of the optimal recovery rate.  In this section, we discuss two numerical methods which would accomplish the latter task.  The methods discussed in this section are not in the form of PINNs. The latter will be discussed in
\S \ref{S:implications}.

We assume in this section that the data sites form tensor product grids, that is,
 $\cX=G_{k,r}$ and 
$\cZ=\overline G_{\bar k,r}:=G_{\bar k,r}\cap \partial \Omega$.  We know such data sites
provide a near optimal recovery.
We have shown that if we use the data ${\bf f}$ at 
$G_{k,r}$, then  we can numerically construct  a continuous piecewise polynomial interpolant $\widehat f$  to $f$ at the data sites which is a near optimal (in terms of the budget 
$\widetilde m=(r2^k)^d$)
approximation to $f$ in the $H^{-1}(\Omega)$ norm.  Similarly, we can numerically construct
a $\widehat g$ from the data  ${\bf g}$ which is a near optimal recovery of $g$ in the $H^{1/2}(\partial\Omega)$ norm
in terms of the budget $\overline{m}\asymp 2^{\bar k(d-1)}$.
The following lemma holds. 
\begin {lemma}
\label{Lgeneral}
Let $\widetilde u$ be
 the solution  to \eref{main} with right hand side
$\widehat f:=S^*_k(f)$ and boundary value 
$\widehat g:=\overline S_k(g)$. Then 
\be  
\label{utilde}
\|u-\widetilde u\|_{H^1(\Omega)}\lesssim
R_{\widetilde m,\overline m}^*(\cU)_{H^1(\Omega)}, \quad \hbox{where}\quad \widetilde m\asymp 2^{kd},
\quad \overline m\asymp 2^{\bar k(d-1)}.
\ee
If instead, we are given a fixed total budget $m$, then 
\be  
\label{utilde1}
\|u-\widetilde u\|_{H^1(\Omega)}\lesssim
R_{m}^*(\cU)_{H^1(\Omega)}, \quad 
\hbox{provided}\quad \widetilde m\asymp \overline m\asymp m/2.
\ee
\end{lemma}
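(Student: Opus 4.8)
The plan is to reduce the estimate to the a priori stability bound \eref{perturb} for the Dirichlet problem, combined with the near-optimal approximation rates for the interpolants $S^*_k$ and $\overline S_{\bar k}$ that were already established in \S\ref{S:OR}. First I would check that $\widetilde u$ is well defined: $\widehat f:=S^*_k(f)$ is a continuous piecewise polynomial, hence $\widehat f\in H^1(\Omega)\subset H^{-1}(\Omega)$, and $\widehat g$, the boundary interpolant of $g$ at $\cZ=\overline G_{\bar k,r}$, is by \eref{need} the trace of $S^*_{\bar k}(v)\in H^1(\Omega)$, hence $\widehat g\in H^{1/2}(\partial\Omega)$; so \eref{main} with data $(\widehat f,\widehat g)$ is well posed by Lax--Milgram. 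The difference $w:=u-\widetilde u$ then solves \eref{main} with right hand side $f-\widehat f$ and boundary value $g-\widehat g$, and \eref{perturb} (equivalently \eref{equiv}) applied to $w$ gives
$$
\|u-\widetilde u\|_{H^1(\Omega)}\lesssim \|f-\widehat f\|_{H^{-1}(\Omega)}+\|g-\widehat g\|_{H^{1/2}(\partial\Omega)}.
$$

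Next I would bound the two terms separately using \S\ref{S:OR}. For the interior term, Theorem \ref{T:ORf}(iv) states precisely that the data sites $G_{k,r}$ together with the interpolant $S^*_k(f)$ realize the optimal recovery rate of $\cF$ in $H^{-1}(\Omega)$, so $\|f-\widehat f\|_{H^{-1}(\Omega)}\lesssim \widetilde m^{-\alpha}$ with $\widetilde m=(r2^k)^d\asymp 2^{kd}$ (with an extra factor $\log\widetilde m$ in the exceptional case $d=2$, $0<p\le 1$); alternatively this follows directly by combining the Sobolev embedding \eref{H-1embed} with the $L_\delta(\Omega)$-approximation estimate of Theorem \ref{T:interapprox}, since $2^{-k(s-d/p+d/\delta)}\asymp \widetilde m^{-\alpha}$. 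For the boundary term, Theorem \ref{T:ORg} likewise gives $\|g-\widehat g\|_{H^{1/2}(\partial\Omega)}\lesssim \overline m^{-\beta}$ with $\overline m=\#(\overline G_{\bar k,r})\asymp 2^{\bar k(d-1)}$, where $\alpha,\beta$ are as in \eref{ab}.

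Finally I would invoke Remark \ref{RR}: by \eref{that} together with the lower bound in \eref{mainco}, $R^*_{\widetilde m,\overline m}(\cU)_{H^1(\Omega)}\gtrsim \widetilde m^{-\alpha}+\overline m^{-\beta}$, so the two bounds above combine to give \eref{utilde}. For \eref{utilde1} one takes $\widetilde m\asymp\overline m\asymp m/2$; by the concluding observation in Remark \ref{RR} (which is Theorem \ref{T:ORu}), this makes $R^*_{\widetilde m,\overline m}(\cU)_{H^1(\Omega)}\asymp R^*_m(\cU)_{H^1(\Omega)}$, completing the argument.

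I do not expect a genuine obstacle here; the proof is an assembly of the Lax--Milgram perturbation estimate with the two approximation theorems of \S\ref{S:OR}. The one point requiring care is the logarithmic loss when $d=2$ and $0<p\le 1$: there $S^*_k$ recovers $f$ in $H^{-1}(\Omega)$ only up to a factor $\log\widetilde m$, so in that regime \eref{utilde} should be read with $\widetilde m^{-\alpha}$ replaced by the sharp upper bound $\log(\widetilde m)\,\widetilde m^{-\alpha}$ from \eref{mainco} (the clean statement holds for $d\ge 3$, or $d=2$ with $p>1$).
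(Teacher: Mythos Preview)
Your proposal is correct and follows essentially the same route as the paper: apply the stability estimate \eref{equiv} to $u-\widetilde u$, bound $\|f-\widehat f\|_{H^{-1}(\Omega)}$ and $\|g-\widehat g\|_{H^{1/2}(\partial\Omega)}$ by the optimal recovery rates for $\cF$ and $\cG$ via Theorems \ref{T:ORf} and \ref{T:ORg}, and then use \eref{that} from Remark \ref{RR} to pass to $R^*_{\widetilde m,\overline m}(\cU)_{H^1(\Omega)}$. Your closing caveat about the $d=2$, $0<p\le 1$ case is a valid observation that the paper's proof glosses over: there the interpolant $S^*_k(f)$ is only known to achieve the upper bound $\log(\widetilde m)\,\widetilde m^{-\alpha}$ in \eref{TF3-d2}, while the lower bound on $R^*_{\widetilde m}(\cF)_{H^{-1}(\Omega)}$ lacks the logarithm, so strictly speaking \eref{utilde} in that regime carries an extra $\log$ factor.
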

\begin{proof}
We have that  $\widehat f:=S^*_k(f)\in C(\overline \Omega)\cap H^1(\Omega)$, see Theorem \ref{T:ORf},  and $\widehat g:=\overline S_k(g)\in H^{1/2}(\partial \Omega)\cap C(\partial \Omega)$, see Theorem \ref{T:ORg}.
Note that we cannot use \eref{not} to bound $\|u-\tilde u\|_{H^1(\Omega)}$ since $\hat f\notin\cF$ and $\hat g\notin\cG$ (they do not have the necessary smoothness) and therefore $\tilde u\notin\cU$. In this case, in view of \eref{equiv} with $v=\widetilde u$, we have
$$
\|u-\widetilde u\|_{H^1(\Omega)}\lesssim
\|f-\hat f\|_{H^{-1}(\Omega)}+\|g-\hat g\|_{H^{1/2}(\partial \Omega)}\lesssim 
R_{\widetilde m}^*(\cF)_{H^{-1}(\Omega)}+
R^*_{\overline m}(\cG)_{H^{1/2}(\partial \Omega)}
\lesssim
R_{\widetilde m,\overline m}^*(\cU)_{H^1(\Omega)},
$$
where we have used \eref{that}. This proves \eref{utilde}.
  In view of Remark~\ref{RR},  given $m$ data sites, the solution $\widetilde u$ that corresponds to the choice of $\widetilde m\asymp m/2$ points on $\overline \Omega$ and $\overline m\asymp m/2$ points on $\partial \Omega$ satisfies \eref{utilde1}
  and thus   is a near optimal recovery of $u$ in the $H^1(\Omega)$ norm.
    \end{proof}


Note that  the constructions of $\widehat f$ and $\widehat g$ are numerical and we can control their complexity. In order to do that for $\widetilde u$, we  use an  existing numerical method for solving
\eref{main}. This will, of course, incur a numerical error depending on the algorithm we choose. We mention two natural possibilities.

\subsection{Using FEMs or AFEMs}

  In order to streamline the discussion using FEM, we make the simplifying  but unnecessary assumption that the data tensor product grids $G_{k,r}$ and $\overline{G}_{\overline k,r}$ are matching, i.e. $k=\overline k$. For $n\geq k$, we let $\mathcal T_n$ be the Kuhn-Tucker subdivision of $\Omega$ associated with the tensor grid $G_{n,r}$ and $V_n$ be the spaces of continuous piecewise polynomials of degree $r$ subordinate to $\mathcal T_n$. We use Lemma \ref{Lgeneral} to derive the following statement about FEMs.
  
  \begin{lemma}
      \label{LFEM}
  Let $\widetilde u$ be the solution to \eref{main} with right hand side $\widehat f:=S^*_k(f)$ and boundary value 
$\widehat g:=\overline S_k(g)$. For $n\geq k$, let  $\widehat u_n$  be the Galerkin projection of $\widetilde u$ onto $V_n$. Then, we have
\be 
  \label{FEM}
  \|u-\widehat u_n\|_{H^1(\Omega)}
  \lesssim R_{\widetilde m,\overline m}^*(\cU)_{H^1(\Omega)}+\e_n(\cU)_{H^1(\Omega)}, \quad \hbox{where}\quad \e_n(\cU)_{H^1(\Omega)}:=\dist(\cU,V_n)_{H^1(\Omega)}.
  \ee 
  \end{lemma}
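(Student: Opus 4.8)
The plan is to combine the bound on $\|u - \widetilde u\|_{H^1(\Omega)}$ already supplied by Lemma \ref{Lgeneral} with a standard FEM (Céa-type) estimate for the Galerkin error $\|\widetilde u - \widehat u_n\|_{H^1(\Omega)}$, and then invoke the triangle inequality. First I would write
$$
\|u - \widehat u_n\|_{H^1(\Omega)} \le \|u - \widetilde u\|_{H^1(\Omega)} + \|\widetilde u - \widehat u_n\|_{H^1(\Omega)},
$$
and observe that the first term is controlled by \eref{utilde} from Lemma \ref{Lgeneral}, giving $\|u - \widetilde u\|_{H^1(\Omega)} \lesssim R_{\widetilde m, \overline m}^*(\cU)_{H^1(\Omega)}$. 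So the only real work is the second term.

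For the Galerkin error, $\widehat u_n$ is the Galerkin projection of $\widetilde u$ onto $V_n$, where $V_n$ consists of continuous piecewise polynomials of degree $r$ on the refined Kuhn-Tucker subdivision $\mathcal T_n$. Since $a \equiv 1$, the bilinear form $\int_\Omega \nabla v \cdot \nabla w$ is coercive and bounded on $H_0^1(\Omega)$, so Céa's lemma gives quasi-optimality in $H^1$: $\|\widetilde u - \widehat u_n\|_{H^1(\Omega)} \lesssim \dist(\widetilde u, V_n + \text{(boundary lift)})_{H^1(\Omega)}$. The mild subtlety is that $\widetilde u$ has inhomogeneous boundary data $\widehat g = \overline S_k(g)$, which is itself a continuous piecewise polynomial of degree $r$ on the boundary trace of $\mathcal T_k \supseteq \mathcal T_n$ — wait, $n \ge k$ so $\mathcal T_n$ refines $\mathcal T_k$, hence $\widehat g$ lies exactly in the trace space of $V_n$ and can be lifted into $V_n$ exactly. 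Thus the Galerkin problem for $\widehat u_n$ has no boundary-approximation error, and Céa's lemma yields $\|\widetilde u - \widehat u_n\|_{H^1(\Omega)} \lesssim \inf_{v \in V_n,\, Tr(v) = \widehat g} \|\widetilde u - v\|_{H^1(\Omega)}$.

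To turn this infimum into $\e_n(\cU)_{H^1(\Omega)} = \dist(\cU, V_n)_{H^1(\Omega)}$, I would bound it instead by $\dist(\cU, V_n)_{H^1(\Omega)} + \|u - \widetilde u\|_{H^1(\Omega)}$: pick $v_n \in V_n$ near-best to $u \in \cU$, correct its trace to match $\widehat g$ (a correction of size comparable to $\|g - \widehat g\|_{H^{1/2}} + \|u - \widetilde u\|_{H^1}$, which is already $\lesssim R_{\widetilde m, \overline m}^*(\cU)_{H^1(\Omega)}$ by Lemma \ref{Lgeneral} and \eref{that}), and use it as the competitor. The resulting extra terms are all absorbed into $R_{\widetilde m, \overline m}^*(\cU)_{H^1(\Omega)}$. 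Collecting everything gives \eref{FEM}. The main obstacle — and it is minor — is bookkeeping the inhomogeneous boundary data correctly: making precise that the Galerkin projection with exactly-representable boundary data still obeys Céa quasi-optimality, and that replacing the constrained best-approximation over $\{v \in V_n : Tr(v) = \widehat g\}$ by the unconstrained $\dist(\cU, V_n)$ costs only terms already bounded by the optimal recovery rate. Everything else is the triangle inequality plus Lemma \ref{Lgeneral}.
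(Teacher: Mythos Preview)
Your proposal is correct and follows essentially the same route as the paper: triangle inequality, Lemma~\ref{Lgeneral} for $\|u-\widetilde u\|_{H^1(\Omega)}$, C\'ea for $\|\widetilde u-\widehat u_n\|_{H^1(\Omega)}$, and then passage from $\widetilde u$ back to $u\in\cU$ to land on $\e_n(\cU)_{H^1(\Omega)}$. The only difference is cosmetic: the paper states C\'ea's lemma directly with the \emph{unconstrained} infimum $\inf_{v\in V_n}\|\widetilde u-v\|_{H^1(\Omega)}$ (valid here because $\widehat g\in Tr(V_n)$ and the discrete trace lifting is stable), and then simply inserts $u$ via $\|\widetilde u-v\|\le\|\widetilde u-u\|+\|u-v\|$ before taking the infimum. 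Your trace-correction step is the explicit unpacking of why the unconstrained C\'ea holds, so your argument is slightly longer but amounts to the same thing.
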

  \begin{proof}
  It follows from \eref{utilde} that 
  \be
\label{FEM1}
  \|u-\widehat u_n\|_{H^1(\Omega)}
  \leq \|u-\widetilde u\|_{H^1(\Omega)}+\|\widetilde u-\widehat u_n\|_{H^1(\Omega)}\lesssim R_{\widetilde m,\overline m}^*(\cU)_{H^1(\Omega)}+\|\widetilde u-\widehat u_n\|_{H^1(\Omega)}.
  \ee
 Note that  since $n\geq k$, we have that $\widehat g \in Tr(V_n)$.
 In particular, $\widehat u_n$ and $\widetilde u$ have the same boundary data (and are defined with the same right hand side $\widehat f$).
 Consequently, $\widehat u_n$ is a quasi-best approximation to $\widetilde u$ in $V_n$,
\be\label{edno}
  \| \widetilde u - \widehat u_n \|_{H^1(\Omega)} \lesssim \inf_{v \in V_n} \| \widetilde u
  - v\|_{H^1(\Omega)}.
\ee
  Invoking \eref{utilde}, we find that for every $v\in V_n$,
  $$
  \| \widetilde u
  - v\|_{H^1(\Omega)}\leq \|\widetilde u-u\|_{H^1(\Omega)}+\| u - v\|_{H^1(\Omega)}\lesssim R_{\widetilde m,\overline m}^*(\cU)_{H^1(\Omega)}+\| u - v\|_{H^1(\Omega)},
  $$
   and hence
  $$
  \inf_{v \in V_n} \| \widetilde u
  - v\|_{H^1(\Omega)}\lesssim R_{\widetilde m,\overline m}^*(\cU)_{H^1(\Omega)}+\inf_{v \in V_n} \|u
  - v\|_{H^1(\Omega)}\leq R_{\widetilde m,\overline m}^*(\cU)_{H^1(\Omega)}+
  \e_n(\cU)_{H^1(\Omega)}.
$$
This, together with \eref{edno}, gives
\be\label{dve}
\| \widetilde u - \widehat u_n \|_{H^1(\Omega)} \lesssim R_{\widetilde m,\overline m}^*(\cU)_{H^1(\Omega)}+
  \e_n(\cU)_{H^1(\Omega)}.
\ee
Combining \eref{FEM1} and \eref{dve} yields the desired estimate \eref{FEM}.
\end{proof}

   Let us now discuss the statement of Lemma \ref{LFEM}. We have determined the optimal recovery rates  
  $R_{\widetilde m,\overline m}^*(\cU)_{H^1(\Omega)}$, see \eref{mainco} for the class $\cU$.
  The term $\e_n$ will tend to zero as $n\to\infty$ and we would obtain the optimal recovery error on the right side of \eref{FEM} if $n$ is chosen suitably large. 
  The  actual decay rate of
  $\e_n$ depends on the regularity of the model class $\cU$ in $H^1(\Omega)$. Therefore, the issue becomes what do our model class assumptions
  $f\in\cF$, $g\in\cG$, say about the regularity of $u$ in the $H^{1}(\Omega)$ norm.   If we use linear spaces such as standard finite element spaces, then we would want to know the regularity of $u$
  in the scale of $H^t(\Omega)$ spaces, $t>1$. 
  There are several theorems that obtain such regularity results (see e.g. \cite{dahlke1997besov,jerison1995inhomogeneous, mitrea2010boundary} and the references therein) and thereby obtain concrete bounds on the decay rate $\e_n$.  We do not elaborate on this further but refer the interested reader to the papers cited above.

  If we use nonlinear numerical methods such as AFEMs for solving \eref{main}  with right side $\widehat f$, and boundary value $\widehat g$, then the 
   decay rate for $\e_n(\cU)$ can be improved (see \cite{bonito2024adaptive} for a summary of such results).

  \subsection{Reduced models}
  \label{SS:RM}
  If one is faced with solving several PDEs with the same data sites $(\cX,\cZ)$ but with different 
  data $({\bf f},{\bf g})$,
  one can build  a numerical algorithm as follows.  In the case when   
  $\cX=G_{k,r}$, $|G_{k,r}|=\widetilde m= [r2^k]^d$, see \eref{Gd} and 
 $\cZ=\overline G_{\bar k,r}$, $|\overline G_{\bar k,r}|=\overline m=2d[r2^{\bar k}]^{d-1}$, see \eref{Gb},
we 
know that the simplicial interpolation operator $S_k^*$ defined in
  \eref{pwpinter1} provides a near optimal recovery for $\cF$.  Namely,
  \be 
  \label{orcF}
  \|f-S_k^*(f)\|_{H^{-1}(\Omega)}\lesssim R_{{\widetilde m}}^*(\cF)_{H^{-1}(\Omega)}, \quad \widetilde m= [r2^k]^d.
  \ee
  We  write
  \be 
  \label{orcF1}
   S_k^*(f)=\sum_{i=1}^{\widetilde m} f(\bx_i)\phi_i,
  \ee
  where the $\phi_i$ are the local Lagrange functions with $\phi_i(\bx_i)=1$
  and $\phi_i(\bx_j)=0$ when $j\neq i$, supported on the simplex containing the point $\bx_i$.  Let $\phi_i^*$ be the solution to
  \eref{main} with right side $\phi_i$ and zero boundary conditions.

  Similarly, the interpolant  
  $\overline S_{\bar k}(g)$ to $g\in \cG$ used for optimal recovery, see \eref{defSkbar}, can be
  written as
  \be 
  \label{orcG1}
\overline S_{\bar k}(g)=\sum_{i=1}^{\overline  m} g(\bz_i)\psi_i, \quad \overline m=2d[r 2^{\bar k}]^{d-1}.
  \ee
  Let $\psi_i^*$ be the solution to \eref{main} with $g=\psi_i$ and zero right side.  
  
  Given any data $({\bf f}, {\bf g})$, the function
  \be
  \label{rb}
  \widetilde u:=\sum_{i=1}^{\widetilde m}f(\bx_i)\phi_i^*+ \sum_{i=1}^{\overline m}g(\bz_i)\psi_i^*
  \ee 
  is a near optimal recovery of the solution $u$ to \eref{main} with right side $f$ and boundary values $g$.  
  Note that $\widetilde u$ is the solution to \eref{main} with right side $S_k^*(f)$ and boundary value $\overline S_{\bar k}(g)$.
  
  Given a budget $m$,   one   computes offline to a sufficiently high accuracy 
  suitable approximations $\widehat\phi_i$ and $\widehat \psi_i$ in $H^1(\Omega)$ to the $\phi_i^*$ and $\psi_i^*$, respectively. Then the function
  \be
  \label{rb1}
  \widehat u:=\sum_{i=1}^{\widetilde m}f(\bx_i)\widehat\phi_i+ \sum_{i=1}^{\overline m}g(\bz_i) \widehat \psi_i, \quad  \hbox{with}\quad \widetilde  m\asymp\overline m\asymp m/2,
  \ee 
is a near optimal recovery for  the solution $u$ to \eref{main} with $f\in\cF$, $g\in \cG$, where $\restr{f}{\cX}={\bf f}$ and 
$\restr{g}{\cZ}={\bf g}$.

Note also that the near optimality can be guaranteed for all model class assumptions
  provided $s$ and $\overline s$ are all less than a fixed number $s_0$.

\section{Numerical methods based on optimization}
\label{S:NOpt}
Another method of finding a near optimal approximation to $u$ from the given data is through optimization, such as the one  used in   PINNs.  
The starting point of this approach is the fact that  the unique solution $u\in H^1(\Omega)$ to \eref{main} is the unique minimizer of the theoretical loss  $\cL_T(u)$, see \eref{firstmin}. However, 
the minimization in \eref{firstmin} is over all $v\in H^1(\Omega)$.  To make this  approach  more amenable to numerical implementation, we take this minimization over a set $\Sigma=\Sigma_n$, where
$\Sigma$ is either a linear space of dimension $n$ or a nonlinear manifold of order $n$ (i.e.,
depending on $n$ parameters), and study any function
\be 
\label{minimize2}
S_\Sigma\in \{\argmin_{S\in\Sigma} \cL_T(S)\}.
\ee
We want to understand how close such an $S_\Sigma$ is to $u$ with error measured in the norm of $H^1(\Omega)$.  Therefore, we need to require that
$\Sigma\subset H^1(\Omega)$.

Let us fix our model class   $\cU$ as given in \eref{defcU}. If we want $S_\Sigma$ to be a good approximation to $u$, then we need  $\Sigma$ to be good at
approximating the elements of $\cU$.  We define the error
$$
E(\cU,\Sigma):= \sup_{v\in\cU}\inf_{S\in  \Sigma}\|v-S\|_{H^1(\Omega)}.
$$
If we return to our specific $u\in\cU$ determined by $f$ and $g$, then for any $S_\Sigma$ from \eref{minimize2} and any $S\in \Sigma$, we have
from \eref{equiv} that 
\be 
\label{H1err1}
\|u-S_\Sigma\|_{H^1(\Omega)}\lesssim \cL_T(S_\Sigma)\lesssim \cL_T(S)\lesssim \|u-S\|_{H^1(\Omega)}.
\ee 
Since \eref{H1err1} holds for all $S\in\Sigma$, we have that
\be 
\label{H1err}
\|u-S_\Sigma\|_{H^1(\Omega)}\lesssim 
\inf_{S\in  \Sigma}\|u-S\|_{H^1(\Omega)}\lesssim E(\cU,\Sigma).
\ee

The discussion that we have just given shows that if we solve the continuous minimization problem
\eref{firstmin} over $\Sigma$ instead of all of $H^1(\Omega)$, then the solution will get as close to
$u$ as the  efficiency of $\Sigma$ in approximating $\cU$, i.e., $E(\cU,\Sigma)$.  This error  can be made as small as we wish by taking finer spaces for $\Sigma$, i.e., letting $n$ increase. The error we incur is then proportional to the error in
$E(\cU,\Sigma)$ which tell us that the best candidates for $\Sigma$ are those  that approximate our model class $\cU$ well.

Several issues arise that prevent the direct implementation of the above loss in our setting.
The first of these is that we do not know neither $f$ nor $g$.  We only know these functions through the given
data.  The remedy for this is to introduce a surrogate for the $H^{-1}(\Omega)$ and $H^{1/2}(\partial\Omega)$ norms that  use only the data information $({\bf f},{\bf g})$ we have about $f$ and $g$.    The next section addresses this issue.

 \section{Discretizing norms} 
\label{S:discrete}
The optimization procedure of the previous section is not a numerical algorithm since it does not incorporate
numerical methods for estimating the   norms appearing  in the loss $\cL_T$, namely   $\|\cdot\|_{H^{-1}(\Omega)}$ and $\|\cdot\|_{H^{1/2}(\partial\Omega)}$. 
In this section, we address this issue by introducing discrete
norms which involve only the values of functions at the data sites.  Replacing the norms in 
$\cL_T$ by these discrete norms leads to a loss  $\cL^*$ which can be numerically
computed. The equivalence between the norms in $\cL_T$ and their discrete counterparts holds (modulo corresponding optimal recovery rates) for   functions whose Laplacians are uniformly bounded in  $\cal B$ and whose traces are uniformly bounded in $\overline{\cal B}$.  

As it will be seen in \S\ref{S:dloss}, the new loss $\cL^*$ is based on a  discretization of  the 
$\|\cdot\|_{L_\gamma(\Omega)}$ (for a special value of $\gamma$) 
and $\|\cdot\|_{H^{1/2}(\partial\Omega)}$ norms. These discretizations  have to satisfy the following two requirements:
\begin{itemize}
    \item 
use the given information $({\bf f}, {\bf g})$ at the grids
(data sites)   $G_{k,r}\subset \overline \Omega$ and 
$\overline G_{\bar k,r}=G_{\bar k,r}\cap \partial \Omega$, respectively.
\item the errors incurred for functions in $\cF$ and $\cG$ be comparable with the optimal recovery rates for these classes.
\end{itemize}
We remark that the discretization of integral norms has been studied in many other papers. For example, in \cite{siegel2023greedy} the discretization of the variational form of the Poisson equation for functions in the Barron space (or variation space corresponding to shallow ReLU$^k$ networks) is considered. This analysis is done for both random data sites and a fixed grid. There are also papers that prove existence of data sites $\cX$ for which the  continuous and discrete $L_\tau$ norms are equivalent for elements of certain linear spaces, see \cite{dai2021sampling,dai2023some}.  To the best of our knowledge, there are no results in the literature which treat the discretization of the $H^{1/2}$-norm that are comparable to what we do in \S\ref{H12}.

\subsection{A discrete $L_\tau$ norm}
\label{SS:dLtau}
In this section, we study
the discretization of the $L_\tau$ norm for any $1\le \tau\le\infty$.  Let $\cF=U(B_q^s(L_p(\Omega)))$, $s>d/p,$
   be  a model class assumption on $f$. 
     We consider the uniform grid $G_{k,r}\subset \overline\Omega$,  see \eref{Gd}, consisting of  
   $\widetilde m$ points, $\widetilde m=|G_{k,r}|=[r2^k]^d$, $k\ge 0$
and $r>\max(s,1)$.
   For any continuous function $f$ and any $1\le \tau < \infty$, we define
\be 
\label{disLgamma}
\|f\|^*_{L_\tau}:=\left[\frac{1}{\widetilde m}
\sum_{j=1}^{\widetilde m}|f({\rm\bf x}_j)|^\tau\right ]^{1/\tau}, \quad \hbox{where}\quad \bx_j\in G_{k,r}.
\ee 
When $\tau=\infty$ we make the usual modification to define the $\|f\|^*_{L_\infty}$ norm.

The following lemma shows that the discrete $L_\tau$ norm is close to the true
$L_\tau$ norm for functions in the model class $\cF$.
\begin{lemma}
\label{L:Ldiscrete}
Let $\cB=B_q^s(L_p(\Omega))$, $\Omega=(0,1)^d$,  be a Besov space with $s>d/p$.  
If $f\in \cB$, then
for any $1\le \tau\le \infty$, we have
$$
\|f\|_{L_\tau(\Omega)}\lesssim
\|f\|^*_{L_\tau}+\|f\|_{\cB}{\widetilde m}^{-\alpha_\tau}, 
\quad \hbox{and}\quad \|f\|^*_{L_\tau}\lesssim \|f\|_{L_\tau(\Omega)}  + \|f\|_{\cB}{\widetilde m}^{-\alpha_\tau}, 
$$
with 
$\alpha_\tau:=\frac{s}{d}-\left (\frac{1}{p}-\frac{1}{\tau}\right)_+$.  Recall that ${\widetilde m}^{-\alpha_\tau}$ is the uniform rate of optimal recovery of the model class $\cF$ in $L_\tau(\Omega)$.   The constant 
in   $\lesssim$ depends only on $r$ and
$d$.
\end{lemma}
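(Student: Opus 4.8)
The plan is to control the difference between $\|f\|_{L_\tau(\Omega)}$ and $\|f\|^*_{L_\tau}$ using the piecewise polynomial interpolant $S_k^*(f)$ from \eref{pwpinter1} as an intermediary, exploiting the fact that for a polynomial of fixed degree $r$ on a simplex the continuous and discrete $L_\tau$ norms are equivalent with constants depending only on $r$ and $d$. First I would record this elementary fact: on the reference simplex $\overline{T_0}$, the two quantities $\|P\|_{L_\tau(\overline{T_0})}$ and $\big(n_r^{-1}\sum_{j}|P(\by_j)|^\tau\big)^{1/\tau}$ (sum over the grid points $G_r\cap\overline{T_0}$) are equivalent norms on the finite-dimensional space $\cP_r$, hence equivalent with constants depending only on $r,d$; rescaling to a simplex $T$ of size $2^{-k}$ introduces a clean Jacobian factor $2^{-kd/\tau}$ on the continuous side, matched by the combinatorial normalization on the discrete side (each simplex contributing $n_r\asymp$ const of the $\widetilde m$ data points). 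Summing the $\tau$-th powers over all $T\in\cT_k$ and taking $\tau$-th roots, one gets
\be
\label{pp:polyequiv}
\|S_k^*(f)\|_{L_\tau(\Omega)}\asymp \Big[\frac{1}{\widetilde m}\sum_{j=1}^{\widetilde m}|S_k^*(f)(\bx_j)|^\tau\Big]^{1/\tau},
\ee
with constants depending only on $r,d$. The case $\tau=\infty$ is the same statement with maxima.

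Next, the crucial observation is that $S_k^*(f)$ interpolates $f$ at exactly the grid points $G_{k,r}$, so the right-hand side of \eref{pp:polyequiv} is literally $\|f\|^*_{L_\tau}$. Therefore the problem reduces to comparing $\|f\|_{L_\tau(\Omega)}$ with $\|S_k^*(f)\|_{L_\tau(\Omega)}$, and this is exactly what Theorem \ref{T:interapprox} delivers: since $s>d/p$ and $\tau\ge 1\ge p$ may fail, one splits into the cases $\tau\ge p$ (apply Theorem \ref{T:interapprox} directly to get $\|f-S_k^*(f)\|_{L_\tau(\Omega)}\lesssim |f|_{\cB}\,2^{-k(s-d/p+d/\tau)}= |f|_{\cB}\widetilde m^{-\alpha_\tau}$ after using $\widetilde m\asymp 2^{kd}$) and $\tau<p$ (use the trivial embedding $L_p(\Omega)\hookrightarrow L_\tau(\Omega)$ on the bounded domain to reduce to $\tau=p$, where $\alpha_\tau=s/d$ matches). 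In either case $\|f-S_k^*(f)\|_{L_\tau(\Omega)}\lesssim \|f\|_{\cB}\widetilde m^{-\alpha_\tau}$. Combining this with \eref{pp:polyequiv} via the triangle inequality (in the quasi-norm sense when $\tau<1$, but here $\tau\ge 1$ so ordinary triangle inequality suffices) gives both desired inequalities: from $\|f\|_{L_\tau(\Omega)}\le \|S_k^*(f)\|_{L_\tau(\Omega)}+\|f-S_k^*(f)\|_{L_\tau(\Omega)}\lesssim \|f\|^*_{L_\tau}+\|f\|_{\cB}\widetilde m^{-\alpha_\tau}$, and symmetrically $\|f\|^*_{L_\tau}\asymp\|S_k^*(f)\|_{L_\tau(\Omega)}\le \|f\|_{L_\tau(\Omega)}+\|f-S_k^*(f)\|_{L_\tau(\Omega)}\lesssim \|f\|_{L_\tau(\Omega)}+\|f\|_{\cB}\widetilde m^{-\alpha_\tau}$.

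The main obstacle, and the step that needs the most care, is the uniformity of the norm equivalence \eref{pp:polyequiv} across scales and the correct bookkeeping of the normalization constant $1/\widetilde m$: one must check that the number of grid points per simplex is a fixed constant $n_r$ independent of $k$ (true, by the rescaling structure set up in \S\ref{SS:pwpapprox}), that shared grid points on simplex faces are counted consistently (they contribute on a lower-dimensional set and do not affect the $L_\tau$ comparison up to $r,d$-constants — alternatively one may use a half-open convention so each grid point is counted once), and that the affine maps $F_T$ all have the same Jacobian $2^{-kd}$ on level $k$. A minor additional point: the statement is for a general dyadic level $k$, so all estimates must be tracked with explicit powers of $2^{-k}$ and only at the end converted using $\widetilde m=[r2^k]^d\asymp 2^{kd}$; since $r$ is fixed this conversion is harmless. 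Finally, one should note the $\tau=\infty$ endpoint separately, where Theorem \ref{T:interapprox} with $\tau=\infty$ gives $\|f-S_k^*(f)\|_{C(\overline\Omega)}\lesssim|f|_{\cB}2^{-k(s-d/p)}=|f|_{\cB}\widetilde m^{-\alpha_\infty}$, consistent with the claimed exponent. \QED
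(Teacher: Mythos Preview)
Your proposal is correct and follows essentially the same approach as the paper: both use the simplicial interpolant $S_k^*(f)$ as the pivot, establish the equivalence $\|S_k^*(f)\|_{L_\tau(\Omega)}\asymp\|f\|^*_{L_\tau}$ via finite-dimensional norm equivalence on $\cP_r$ and rescaling, then invoke Theorem~\ref{T:interapprox} (splitting $\tau\ge p$ versus $\tau<p$) together with the triangle inequality. Your discussion of the bookkeeping issues (shared grid points, uniform Jacobians, the $\tau=\infty$ endpoint) is in fact more explicit than the paper's own proof.
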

\begin{proof} 
We prove the lemma for $1\le \tau<\infty$.
A simple modification of the proof handles the case $\tau=\infty$.
We use the simplicial interpolation operator
 $S^*_k$ given in \eref{pwpinter1}. Let $\cT_k:=\cT_k(\Omega)=\bigcup T$ be the set of simplices that make up the simplicial decomposition of the cubes in $\cD_k(\overline \Omega)$.
Then, from \eref{pwpinter1} we have
\be 
\label{Sknorm}
\|S_k^*(f)\|^\tau_{L_\tau(\Omega)}=\sum_{T\in\cT_k}  \|L_T(f)\|^\tau_{L_\tau(T)},
\ee
where $L_T(f)$ is the polynomial in $\cP^d_r$ that interpolates the 
data  
$\{f({\bf x}_i):\,{\bf x}_i\in \overline T\}$. By equivalence of
norms on $\cP^d_r$ (the linear space of algebraic polynomials of $d$ variables and  order $r$ (total degree $r-1$)), rescaling arguments,  and the fact that we have $d!$ simplices in the Kuhn-Tucker decomposition of a cube,
\be 
\label{dLtau}
\|L_T(f)\|_{L_\tau(T)}\asymp  \[\frac{1}{\widetilde m}\sum_{\bx_i\in 
{\overline T}}|f(\bx_i)|^\tau\]^{1/\tau},
\ee 
with   constants of equivalency depending only on $d,r$.
Summing over $T\in\cT_k$ the quantities in \eref{dLtau} gives
\be 
\label{dLtau1}
\|S_k^*(f)\|_{L_\tau(\Omega)}\asymp \[ \frac{1}{\widetilde m}\sum_{i=1}^{\widetilde m}|f(\bx_i)|^\tau\]^{1/\tau}=\|f\|^*_{L_\tau(\Omega)} .
\ee

For $f\in \cB=B_q^s(L_p(\Omega))$, the piecewise polynomial interpolant  $S_k^*(f)$ provides a  near optimal recovery of $f$ from the data ${\bf f}$, that is, 
see Theorem \ref{T:interapprox},
\be 
\label{Ld0}
 \|f -S_k^*(f)\|_{L_\tau(\Omega)} \le \begin{cases}
 C|f|_{B_p^s(\Omega)} 2^{-k(s-d/p+d/\tau)}= C\|f\|_{\cB}{\widetilde m}^{-\alpha_\tau}, \quad \quad\quad \quad\quad \quad \tau\geq p,\\\\
\|f -S_k^*(f)\|_{L_p(\Omega)}\le C|f|_{B_p^s(\Omega)} 2^{-ks}= C\|f\|_{\cB}{\widetilde m}^{-\alpha_\tau}, \quad \tau< p,
 \end{cases}
\ee 
where we have used that $\widetilde m=[r2^k]^d$.
It follows then from \eref{dLtau1} that 
\be 
\label{Ld1}
\left |\|f\|_{L_\tau(\Omega)}-\|S_k^*(f)\|_{L_\tau(\Omega)}\right |\le C\|f\|_{\cB}{\widetilde m}^{-\alpha_\tau},
\ee
and the proof is completed.
\end{proof}

\subsection{Discrete $H^{1/2}(\partial \Omega)$ norms}
\label{H12}

 In this section, we introduce discrete $H^{1/2}(\partial\Omega)$ norms and discuss their accuracy in computing the true $H^{1/2}(\partial\Omega)$ norm for functions $g$ in our model class $\cG$. The discrete norm
  we introduce only uses the values of $g$ at the data sites 
  and can therefore be computed from the given data. 
 We fix any $k\ge 1$ and let 
 $\cZ:=\overline G_{k,r}=\{\bz_j\}_{j=1}^{\overline m}=G_{k,r}\cap \partial \Omega$, see \eref{Gb},  be the grid points on the boundary $\partial\Omega$.    The number of grid points in $\cZ$ is $\overline m=2d[r2^{k}]^{d-1}$. 

We fix a model class $\cG=Tr(U(\overline\cB))$ of the form \eref{largestG}.  The optimal recovery rate for this class is
\be 
\label{ORGH}
R_{\overline m}^*(\cG)_{H^{1/2}(\partial\Omega)}\asymp {\overline m}^{-\beta},\quad { \overline m}\ge 1,\quad \beta= \frac{\bar s-1}{d-1}.
\ee
This also serves to fix $r$ which we recall is the smallest integer satisfying $r>\max(\bar s,1)$.
For any continuous function $g$ which is the trace of a $v\in\overline B$, we use the trace norm
\be 
\label{Trnorm}
\|g\|_{Tr(\overline \cB)}:=\inf_{Tr(v)=g} \|v\|_{\overline B}.
\ee
 Recall that the intrinsic semi-norm  for $H^{1/2}(\partial\Omega)$ is given by   
\be 
\label{iH1/2}
|g|_{H^{1/2}(\partial \Omega)}^2:= \int_{\partial\Omega\times \partial\Omega}\frac {|g(\bx)- g(\by)|^2}{|\bx-\by|^d}\,d\bx d\by.
\ee
We obtain the intrinsic $H^{1/2}(\partial\Omega)$ norm by adding
$\|g\|_{L_2(\partial\Omega)}$ to this semi-norm.
It is known that this definition is equivalent to the trace norm definition \eref{Trnorm} for this domain $\Omega$ (see \cite{grisvard2011elliptic}).  We shall use \eref{iH1/2} through out this section.

Let $\overline S_k$ be as defined in \eref{defSkbar}.  We have proven in Theorem \ref{T:ORg} that 
\be 
\label{errorg}
\|g-\overline S_k(g)\|_{H^{1/2}(\partial\Omega)}\leq C \|g\|_{Tr(\overline \cB)} \overline {m}^{-\beta},  
\ee 
with $C$ independent of $\overline{m}$ and $g$.
This gives the comparison
$$
\left |\|g\|_{H^{1/2}(\partial\Omega)}-
\|\overline S_k(g)\|_{H^{1/2}(\partial\Omega)}\right |\leq \|g-\overline S_k(g)\|_{H^{1/2}(\partial\Omega)}\leq C\|g\|_{Tr(\overline\cB)}\overline {m}^{-\beta}.
$$
We concentrate therefore on finding a discrete $H^{1/2}(\partial\Omega)$ norm for the  functions $\overline S_k(g)$, $g\in\cG$.  Note that $\overline S_k(g)$ is completely determined by the values of $g$ at the data sites $\overline G_{k,r}$.
For each $k\ge 0$, we define $\cD_k(\partial \Omega):=\partial \Omega\cap\cD_k(\overline \Omega)$ to be the  partition of $\partial \Omega$ into  dyadic cubes $I$ of side length $2^{-k}$.
    Given $k\ge 0$, let  $ \cT_k:=\cT_k(\partial \Omega)$ denote the collection of all $d-1$ dimensional simplicies  given by the    Kuhn-Tucker partition of   the  dyadic cubes $I\in\cD_k(\partial\Omega)$ of  the boundary $\partial\Omega$.  For each dyadic cube $I\in\cD_k(\partial\Omega)$, there are $(d-1)!$ simplices contained in $I$

    Let us further introduce the finite dimensional linear space $V^r(\cT_k)$ of functions that are  continuous on $\partial\Omega$ and piecewise polynomial (subordinate to $\cT_k$) of order $r$ on $\partial \Omega$.  The linear operator $\overline S_k$   is a projector onto $V^r(\cT_k)$.

     If $ g$  is in $C(\partial \Omega)$, we define
 \begin{equation}
 \label{dH1/2}
| g|_{H^{1/2}(\partial \Omega)}^* :=   \left[ \frac 1 {\overline{m}^{2}}\sum_{\substack{i \not =  j\\
}} \frac{|g(\bz_i)-g(\bz_j)|^{2}}{|\bz_i-\bz_j|^{d}}\right]^{1/2}
\end{equation}
and prove that the  the two  semi-norms $|\cdot |_{H^{1/2}(\partial \Omega)}^* $ and $| \cdot|_{H^{1/2}(\partial \Omega)} $ are equivalent on $V^r(\cT_k)$.
We begin with the following lemma.

\begin{lemma}\label{L:equivalenceH12}
    For each pair $T,T' \in \cT_k$ and each $S\in V^r(\cT_k)$, we have
    \begin{equation}
    \label{e:local_equiv_discrete}
    \left [\frac 1 {\overline {m}^{2}}\sum_{
    {\substack{i \not =  j \\ \bz_i \in \overline T, \ \bz_j \in \overline T'}}} \frac{|S({\bz_i})-S({\bz_j})|^{2}}{|{
    \bz_i-\bz_j}|^{d}}\right ]^{1/2}
    {\asymp} \left [ \int_{T\times T'}  \frac{|S(\bz)-S(\bz')|^2}{|\bz - \bz'|^{d}}d\bz \  d\bz' \right ]^{1/2}, 
    \end{equation}
  { where the constants in $\asymp$ depend} only on $r$ and $d$. 
   \end{lemma}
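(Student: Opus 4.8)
The plan is to reduce \eref{e:local_equiv_discrete} to a norm equivalence on a fixed finite dimensional space, uniformly over a \emph{finite} family of ``reference'' simplex pairs, by using two invariances: the dilation $\bz\mapsto 2^k\bz$ and rigid motions of $\R^d$.

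First I would rescale. Both $T,T'$ are Kuhn--Tucker simplices of dyadic cubes of $\partial\Omega$ of side $2^{-k}$; applying $\bz\mapsto 2^k\bz$ (and translating) turns them into Kuhn--Tucker simplices $\widehat T,\widehat T'$ of \emph{unit} cubes, turns $S$ into $\widehat S(\widehat\bz):=S(2^{-k}\widehat\bz)$, still a continuous piecewise polynomial of order $r$, and sends the $\dim\cP^{d-1}_r$ interpolation points of $\overline T$ (resp.\ $\overline T'$) to the reference grid points of $\overline{\widehat T}$ (resp.\ $\overline{\widehat T'}$). A change of variables in the $(d-1)$-dimensional integral, together with $|\bz_i-\bz_j|=2^{-k}|\widehat\bz_i-\widehat\bz_j|$ and $\overline m^{-2}\asymp 2^{-2k(d-1)}$, shows that the quantity under the square root on the left of \eref{e:local_equiv_discrete} is $\asymp 2^{k(2-d)}\sum_{i\ne j}|\widehat S(\widehat\bz_i)-\widehat S(\widehat\bz_j)|^2/|\widehat\bz_i-\widehat\bz_j|^d$ and the quantity under the square root on the right equals exactly $2^{k(2-d)}\int_{\widehat T\times\widehat T'}|\widehat S(\widehat\bz)-\widehat S(\widehat\bz')|^2/|\widehat\bz-\widehat\bz'|^d\,d\widehat\bz\,d\widehat\bz'$, with constants depending only on $r,d$. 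So it suffices to prove
$$
\sum_{\substack{i\ne j\\ \widehat\bz_i\in\overline{\widehat T},\ \widehat\bz_j\in\overline{\widehat T'}}}\frac{|\widehat S(\widehat\bz_i)-\widehat S(\widehat\bz_j)|^2}{|\widehat\bz_i-\widehat\bz_j|^d}\ \asymp\ \int_{\widehat T\times\widehat T'}\frac{|\widehat S(\widehat\bz)-\widehat S(\widehat\bz')|^2}{|\widehat\bz-\widehat\bz'|^d}\,d\widehat\bz\,d\widehat\bz'
$$
with $r,d$-constants. One records here that the right side is finite: $\widehat S$ is Lipschitz on the polytope $\partial\Omega$, so the integrand is $\lesssim|\widehat\bz-\widehat\bz'|^{2-d}$, which is integrable over the product of two $(d-1)$-dimensional simplices.

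Next I would split according to whether $\overline{\widehat T}\cap\overline{\widehat T'}=\emptyset$. If it is, then $\widehat\rho:=\dist(\overline{\widehat T},\overline{\widehat T'})$ is bounded below by a positive constant depending only on $d$, while $\diam\widehat T,\diam\widehat T'\le\sqrt{d-1}$, so $|\widehat\bz-\widehat\bz'|\asymp\widehat\rho$ for all $\widehat\bz\in\overline{\widehat T}$, $\widehat\bz'\in\overline{\widehat T'}$; pulling the common factor $\widehat\rho^{-d}$ out of both sides collapses the claim to the weightless comparison $\sum_{i,j}|\widehat S(\widehat\bz_i)-\widehat S(\widehat\bz_j)|^2\asymp\int_{\widehat T\times\widehat T'}|\widehat S(\widehat\bz)-\widehat S(\widehat\bz')|^2$. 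Both sides are positive semidefinite quadratic forms in the pair $(\widehat S|_{\widehat T},\widehat S|_{\widehat T'})\in\cP^{d-1}_r\times\cP^{d-1}_r$, a space of dimension $O(1)$ in $k$; both vanish precisely when $\widehat S$ is constant on $\widehat T\cup\widehat T'$ (for the discrete form this uses unisolvence of interpolation at the $\dim\cP^{d-1}_r$ points of each simplex, plus one cross term to force the two constants equal). After removing $\widehat\rho^{-d}$ neither form depends on the relative position of $\widehat T,\widehat T'$ any more, only on the $(d-1)!$ possible simplex shapes; since any two positive definite forms on a fixed finite dimensional space (here the quotient by the common one-dimensional kernel) are equivalent, taking the worst of the finitely many constants settles this case.

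If instead $\overline{\widehat T}\cap\overline{\widehat T'}\ne\emptyset$ (including $\widehat T=\widehat T'$), I would use that there are only finitely many such pairs modulo rigid motions of $\R^d$ --- the shared point lies in a common face of $\Omega$, near any vertex or edge of $\Omega$ the local cube structure repeats under the relevant lattice translations, and within a single face the two containing cubes differ by a vector in $\{-1,0,1\}^{d-1}$ --- and rigid motions leave both sides unchanged. For a fixed reference pair, both sides are finite (by the Lipschitz bound above) positive semidefinite quadratic forms on the finite dimensional space of pairs $(\widehat S|_{\widehat T},\widehat S|_{\widehat T'})$ subject to $C^0$ matching across $\overline{\widehat T}\cap\overline{\widehat T'}$, and both have the same kernel, namely the functions constant on $\overline{\widehat T}\cup\overline{\widehat T'}$: for the discrete form one picks a grid point of $\widehat T'$ off $\overline{\widehat T}$ (it exists, since a face of $\widehat T\cap\widehat T'$ has dimension $\le d-2$ and hence carries fewer than $\dim\cP^{d-1}_r$ grid points) to force $\widehat S|_{\widehat T}$ constant, then symmetrically $\widehat S|_{\widehat T'}$, with equality of the two constants coming from the cross term. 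Equivalence of the two forms on the quotient is then automatic, and taking the worst constant over the finite list finishes the proof. The only points needing genuine care are the scaling bookkeeping in the first step --- checking that $\overline m^{-2}$ and the Jacobian of the rescaled singular integral produce the \emph{same} power of $2$ on both sides, so that passage to unit scale is legitimate --- and, in the disjoint case, making precise that once $\widehat\rho^{-d}$ is pulled out the unboundedly many relative positions disappear and only the finitely many simplex shapes remain; everything else is routine finite dimensional linear algebra.
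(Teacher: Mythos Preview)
Your proof is correct and follows essentially the same strategy as the paper's: reduce the touching case to finitely many reference configurations via scaling/rigid motions and a finite-dimensional semi-norm equivalence, and handle the disjoint case by pulling out the (comparable) distance weight and invoking an unweighted $L_2$ discretization on $\cP_r^{d-1}\times\cP_r^{d-1}$. The paper organizes this slightly differently---it first treats all pairs with $k\le 4$ by brute-force finite dimensionality, then for $k>4$ reduces touching pairs back to level $k=4$ by a single dilation, and for disjoint pairs quotes the $L_2$ discretization \eref{firstddis}---but the content is the same.
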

\begin{proof}  
We fix $d,\,r$,   and a pair $T,T'\in \cT_k:=\cT_k(\partial\Omega)$.
Let us first consider the case when $k\le 4$.  
Let $\cN(S):=\cN_{T,T'}(S)$ denote the expression in \eref{e:local_equiv_discrete} involving the sum, and  $\cN'(S)$ the expression involving the integral.  Both are semi-norms on the finite dimensional linear space $X=X(T,T')$ of functions from $V^r(\cT_k)$ restricted to $\overline {T}\cup \overline{T'}$.  A function $S\in X$ satisfies $\cN(S)=0$ if and only if $S$ is constant on $\overline T$, {$\overline{T'}$}  and, additionally, $S$ is constant on $\overline {T}\cup \overline{T'}$ when $\overline {T}$ and $\overline{T'}$ touch.  A similar statement holds for the semi-norm $\cN'$.   It follows that for any pair $T,T'$, we have
\be 
\label{kernel}
c\cN(S)\le \cN'({S})\le C\cN(S), \quad S\in X(T,T'),
\ee 
where the constants $c,C$ depend on $T$, $T'$,    $r$ and $d$,  but are independent of $S$.  
Since 
 there are only a finite number (depending on $d$) of pairs of simplicies in $\cT_k(\partial\Omega)$, $k\leq 4$,  there are constants $c_1,C_1$  
which depend only on 
$r,d$ for which   \eref {kernel} is valid for all pairs of simplices from 
  $\cT_k(\partial\Omega)$, $k\leq 4$. Then
\eref{kernel} holds for all such  pairs with $c=c_1$, $C=C_1$, and  we have completed the proof in this case.

We now consider the case $k>4$.  
Given a pair $T,T'$, we let
$c(T,T')$ be the largest number such that the lower inequality in \eref{kernel} holds uniformly for all $S\in X(T,T')$. Similarly $C(T,T')$ is the smallest constant so that the upper inequality of \eref{kernel} holds uniformly on $X(T,T')$. The above argument using equivalence of semi-norms shows that there always exist positive constants $c(T,T'), C(T,T')$.  We are left to show
that there are constants $0<c^*<C^*<\infty$, depending only on $d$ and $r$,  for  which $c(T,T')\ge c^*$
and $C(T,T')\le C^*$ holds for all pairs $T,T'$ such that $T,T'\in\cT_k$ and $k>4$.   We consider two cases.
\vskip .1in
\noindent 
{\bf Case 1:  $\overline T\cap \overline{T'}\neq \emptyset$.} 
\vskip .1in
In this case, there is a pair $T_0,T_0'\in \cT_4$, and  a   linear  mapping consisting of a translation and 
a dilation with factor $2^{-k+4}$ that rigidly transforms $T_0,T_0'$ to the pair $T,T'$.  If we use this linear
transformation to change variables in \eref {L:equivalenceH12} we obtain the validity of \eref {L:equivalenceH12}
for this case with the same constants $c_1,C_1$.
 
\vskip .1in
\noindent 
{\bf Case 2: $\overline{T}\cap \overline{T'}=\emptyset$.}
First note that from the $L_2$ discretization of the previous section, we have that there are constants
$c_2,C_2$ depending only on $r,d$ such that for each $S\in V^r(\cT_k(\partial\Omega))$
   \be 
   \label{firstddis}
   c_2\[ \int_{T\times T'}  |S(\bz)-S(\bz')|^2 d\bz \  d\bz' \]^{1/2}
    \le \[\frac 1 {\overline{m}^{2}}\sum_{\substack{\bz \not =  \bz' \in \mathcal Z\\ \bz \in {\overline T}, \ \bz' \in {\overline T'}}} |S(\bz)-S(\bz')|^{2} \]^{1/2}\le C_2 \[ \int_{T\times T'}  |S(\bz)-S(\bz')|^2 d\bz \  d\bz' \]^{1/2}. 
    \ee
 Now, on $T\times T'$,   the expressions $|\bz-\bz'|$  that appear in \eref  {e:local_equiv_discrete} are all comparable with absolute constants.  Therefore, we obtain
 \eref  {e:local_equiv_discrete} holds in this case as well with constants depending only on $r,d$.  
\end{proof}

We can now prove that the discrete semi-norm and the true semi-norm of functions in $\cG$ are comparable.

\begin{theorem} 
\label{T:dH121}
For any $g\in\cG=Tr(U(\overline B))$, where  
$\overline B=B^{\bar s}_\infty(L_2(\Omega))$  with $\bar s>d/2$, and any $\overline m\geq 1$, we have
\be 
\label{comparedH1/2}
|g|_{H^{1/2}(\partial\Omega)}\lesssim |g|^*_{H^{1/2}(\partial\Omega)}+{\|g\|_{Tr(\overline \cB)} }\overline  m^{-\beta}, \quad \hbox{and}\quad 
|g|^*_{H^{1/2}(\partial\Omega)}\lesssim |g|_{H^{1/2}(\partial\Omega)}+{\|g\|_{Tr(\overline \cB)}}\overline  m^{-\beta},
\ee
with 
\be
\label{defbeta}
\beta=\frac{\bar s-1}{d-1},
\ee 
and constants in $\lesssim$ depending only on   $r,d$.
Note that the optimal recovery rate of $\cG$ in the $H^{1/2}(\partial\Omega)$ is $\overline m^{-\beta}$.
\end{theorem}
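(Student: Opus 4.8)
The plan is to transfer the comparison from $g$ to its piecewise polynomial interpolant $\overline S_k(g)$ defined in \eref{defSkbar}, on which Lemma \ref{L:equivalenceH12} applies directly. The starting observation is that the discrete semi-norm $|\cdot|^*_{H^{1/2}(\partial\Omega)}$ in \eref{dH1/2} depends on a function only through its values at the data sites $\cZ=\overline G_{k,r}$; since $\overline\phi_i(\bz_j)=\delta_{ij}$ for $\bz_j\in\overline G_{k,r}$, the interpolant reproduces these values, i.e. $\overline S_k(g)(\bz_j)=g(\bz_j)$ for all $j$, and hence
$$
|g|^*_{H^{1/2}(\partial\Omega)}=|\overline S_k(g)|^*_{H^{1/2}(\partial\Omega)}
$$
exactly, with no error. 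Thus it suffices to prove the clean equivalence $|S|^*_{H^{1/2}(\partial\Omega)}\asymp|S|_{H^{1/2}(\partial\Omega)}$ for $S\in V^r(\cT_k)$, with constants depending only on $r$ and $d$, and then to fold in the optimal recovery estimate \eref{errorg}.

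To establish this equivalence I would split both sides over pairs of simplices $T,T'\in\cT_k(\partial\Omega)$. On the continuous side the splitting is exact, $|S|^2_{H^{1/2}(\partial\Omega)}=\sum_{T,T'}\int_{T\times T'}|S(\bz)-S(\bz')|^2|\bz-\bz'|^{-d}\,d\bz\,d\bz'$, because the simplices tile $\partial\Omega$ up to a set of measure zero. On the discrete side the naive splitting overcounts a node pair $(\bz_i,\bz_j)$ by the factor $\mu_i\mu_j$, where $\mu_i$ is the number of simplices of $\cT_k(\partial\Omega)$ whose closure contains $\bz_i$; since $1\le\mu_i\le C_0(d)$ for a constant $C_0(d)$ depending only on $d$, we get
$$
\frac1{\overline m^2}\sum_{i\neq j}\frac{|S(\bz_i)-S(\bz_j)|^2}{|\bz_i-\bz_j|^d}\;\le\;\sum_{T,T'}\frac1{\overline m^2}\sum_{\substack{i\neq j\\ \bz_i\in\overline T,\ \bz_j\in\overline{T'}}}\frac{|S(\bz_i)-S(\bz_j)|^2}{|\bz_i-\bz_j|^d}\;\le\;C_0(d)^2\,\frac1{\overline m^2}\sum_{i\neq j}\frac{|S(\bz_i)-S(\bz_j)|^2}{|\bz_i-\bz_j|^d}.
$$
Applying Lemma \ref{L:equivalenceH12} to each summand of the middle expression and summing over all pairs $T,T'$ shows that the middle expression is comparable, with constants depending only on $r,d$, to $|S|^2_{H^{1/2}(\partial\Omega)}$; combined with the two-sided bound above this yields $|S|^*_{H^{1/2}(\partial\Omega)}\asymp|S|_{H^{1/2}(\partial\Omega)}$ with constants depending only on $r$ and $d$.

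Finally I would assemble \eref{comparedH1/2}. Using the triangle inequality for the semi-norm $|\cdot|_{H^{1/2}(\partial\Omega)}$, the trivial bound $|h|_{H^{1/2}(\partial\Omega)}\le\|h\|_{H^{1/2}(\partial\Omega)}$, and \eref{errorg} (which gives $\|g-\overline S_k(g)\|_{H^{1/2}(\partial\Omega)}\le C\|g\|_{Tr(\overline\cB)}\overline m^{-\beta}$ with $\beta=(\bar s-1)/(d-1)$), we obtain
$$
|g|_{H^{1/2}(\partial\Omega)}\le|\overline S_k(g)|_{H^{1/2}(\partial\Omega)}+C\|g\|_{Tr(\overline\cB)}\overline m^{-\beta}\lesssim|\overline S_k(g)|^*_{H^{1/2}(\partial\Omega)}+\|g\|_{Tr(\overline\cB)}\overline m^{-\beta}=|g|^*_{H^{1/2}(\partial\Omega)}+\|g\|_{Tr(\overline\cB)}\overline m^{-\beta},
$$
and, symmetrically, $|g|^*_{H^{1/2}(\partial\Omega)}=|\overline S_k(g)|^*_{H^{1/2}(\partial\Omega)}\lesssim|\overline S_k(g)|_{H^{1/2}(\partial\Omega)}\le|g|_{H^{1/2}(\partial\Omega)}+C\|g\|_{Tr(\overline\cB)}\overline m^{-\beta}$. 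These are precisely the two inequalities in \eref{comparedH1/2}.

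The analytic core of the argument is Lemma \ref{L:equivalenceH12}, which is already proven; what remains here is essentially organizational. The two points that need genuine (if routine) care are the uniform bound $\mu_i\le C_0(d)$ on the number of Kuhn--Tucker simplices of $\cT_k(\partial\Omega)$ meeting at a grid point --- this is where the geometry of the fixed simplicial decomposition enters and is what keeps the overcounting constant dimension-only --- and the exact identity $|g|^*_{H^{1/2}(\partial\Omega)}=|\overline S_k(g)|^*_{H^{1/2}(\partial\Omega)}$, which is precisely what allows us to replace $g$ by a piecewise polynomial in the discrete term at no cost, leaving the $\overline m^{-\beta}$ term to come solely from the continuous approximation error \eref{errorg}.
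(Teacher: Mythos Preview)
Your proposal is correct and follows essentially the same approach as the paper: both proofs split the continuous and discrete semi-norms over pairs $T,T'\in\cT_k(\partial\Omega)$, invoke Lemma~\ref{L:equivalenceH12} termwise to obtain the equivalence $|S|^*_{H^{1/2}(\partial\Omega)}\asymp|S|_{H^{1/2}(\partial\Omega)}$ on $V^r(\cT_k)$, use the key identity $|g|^*_{H^{1/2}(\partial\Omega)}=|\overline S_k(g)|^*_{H^{1/2}(\partial\Omega)}$, and then fold in the approximation bound \eref{errorg} via the triangle inequality. Your treatment of the overcounting factor $\mu_i\le C_0(d)$ is slightly more explicit than the paper's, but the argument is the same.
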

\begin{proof}
 For any $S\in V^r(\cT_k)$, we have
$$
| S |_{H^{1/2}(\partial \Omega)} = \left[\sum_{T,T' \in \cT_k}  \int_T \int_{T'} \frac{| S(\bz)-S(\bz') |^2}{|\bz -\bz'|^{d}} \ d\bz \ d\bz'\right]^{1/2},
$$
and
$$
| S |_{H^{1/2}(\partial \Omega)}^* = \left[\sum_{\bz \not = \bz' \in \overline G_{k,r}}  \frac{1}{\overline m^2} \frac{| S(\bz)-S(\bz') |^2}{|\bz -\bz'|^{d}} \right[^{1/2} \asymp \left[\sum_{T,T' \in \cT_k} \sum_{\substack{\bz \not =  \bz' \in \overline G_{k,r}\\ \bz \in {\overline T}, \ \bz' \in {\overline T'}}}  \frac{1}{\overline m^2} \frac{| S(\bz)-S(\bz') |^2}{|\bz -\bz'|^{d}} \right]^{1/2}.
$$
Here, in the second expression, we only have equivalence since a given data site may be used for more than one simplex.
Note, however that the constant in the second equivalence depends only on $d$ and $r$.

 In view of Lemma \ref{L:equivalenceH12}, we know that there exist constants $0<c<C<\infty$ that depend only on $d$ and $r$ such that for each $S\in V^r(\cT_k)$, we have
 \be 
 \label{knowH12}
c| S |_{H^{1/2}(\partial \Omega)}\le | S |_{H^{1/2}(\partial \Omega)}^*\le C| S |_{H^{1/2}(\partial \Omega)}.
 \ee 
In particular this holds for $S=\overline S_k(g)$ whenever $g\in \cG$.  If we combine \eref{knowH12} with \eref{errorg}
and observe that $|g|^*_{H^{1/2}(\partial\Omega)}=|\overline S_k(g)|^*_{H^{1/2}(\partial\Omega)}$,  we obtain the theorem.
\end{proof}

 In order to define a discrete norm for $H^{1/2}(\partial\Omega)$, we will also need a discrete $L_2(\partial \Omega)$ norm for the functions in our model class $\cG$.  Using the same $\overline m$  data sites $\cZ:=\overline G_{k,r}$, we define
\be 
\label{dL2boundary}
\|g\|^*_{L_2(\partial\Omega)}:=\left[\frac{1}{\overline m}\sum_{j=1}^{\overline m} |g(\bz_j)|^2\right]^{1/2},\quad 
g\in\cG, \quad \bz_j\in\overline G_{k,r}.
\ee
Arguing as we have done in \S\ref{SS:dLtau}, we have for $\alpha= \frac{\bar s-1/2}{d-1} >\beta$ and any $g\in Tr(\overline \cB)$,
\be\label{discr}
\|g\|_{L_2(\partial\Omega)}\lesssim
\|g\|^*_{L_2(\partial\Omega)}+\|g\|_{Tr(\overline \cB)} \overline m^{-\alpha},  \quad\hbox{and}\quad 
 \|g\|^*_{L_2(\partial\Omega)}\lesssim\|g\|_{L_2(\partial \Omega)}  + \|g\|_{Tr(\overline \cB)} \overline m^{-\alpha}.
 \ee

 We now define for any continuous function $g$ the discrete $H^{1/2}(\partial\Omega)$ norm
 \be 
 \label{dH12norm}
 \|g\|^*_{H^{1/2}(\partial\Omega)}:=\|g\|^*_{L_2(\partial\Omega)}+|g|^*_{H^{1/2}(\partial\Omega)}.
 \ee
 
 \begin{theorem}
 \label{T:dH12norm}
 For any  $g\in\cG=Tr(U(\overline B))$, where  
$\overline \cB=B^{\bar s}_\infty(L_2(\Omega))$
with $\bar s>d/2$, and any $\overline m\geq 1$, we have
 \be 
\label{TcomparedH1/2}
\|g\|_{H^{1/2}(\partial\Omega)}\lesssim \|g\|^*_{H^{1/2}(\partial\Omega)}+ \|g\|_{Tr(\overline \cB)}\overline m^{-\beta}, \quad \hbox{and}\quad 
\|g\|^*_{H^{1/2}(\partial\Omega)}\lesssim \|g\|_{H^{1/2}(\partial\Omega)}+ \|g\|_{Tr(\overline \cB)}\overline m^{-\beta},
\ee
where $\beta$  is given in \eref{defbeta}, and the constants in $\lesssim$ depend only on $r$ and $d$.
 \end{theorem}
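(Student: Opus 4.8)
The plan is to obtain \eref{TcomparedH1/2} by adding two comparisons that are already in hand: the seminorm comparison of Theorem~\ref{T:dH121} and the $L_2(\partial\Omega)$ comparison \eref{discr}. First I would recall the two norm decompositions that are in force here: the intrinsic norm satisfies $\|g\|_{H^{1/2}(\partial\Omega)}=\|g\|_{L_2(\partial\Omega)}+|g|_{H^{1/2}(\partial\Omega)}$, and, by definition \eref{dH12norm}, $\|g\|^*_{H^{1/2}(\partial\Omega)}=\|g\|^*_{L_2(\partial\Omega)}+|g|^*_{H^{1/2}(\partial\Omega)}$. Thus it suffices to bound each summand of $\|g\|_{H^{1/2}(\partial\Omega)}$ by the corresponding discrete summand plus an $O(\|g\|_{Tr(\overline\cB)}\overline m^{-\beta})$ error, and symmetrically.

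Next I would quote the seminorm estimate from Theorem~\ref{T:dH121}, namely $|g|_{H^{1/2}(\partial\Omega)}\lesssim |g|^*_{H^{1/2}(\partial\Omega)}+\|g\|_{Tr(\overline\cB)}\overline m^{-\beta}$ and its reverse, both with constants depending only on $r,d$. For the zero-order part I would quote \eref{discr}, which gives $\|g\|_{L_2(\partial\Omega)}\lesssim \|g\|^*_{L_2(\partial\Omega)}+\|g\|_{Tr(\overline\cB)}\overline m^{-\alpha}$ and its reverse with $\alpha=\frac{\bar s-1/2}{d-1}$. The one small observation to record is that $\alpha-\beta=\frac{1/2}{d-1}>0$, so since $\overline m\ge 1$ we have $\overline m^{-\alpha}\le\overline m^{-\beta}$, and the two $L_2(\partial\Omega)$ estimates hold verbatim with $\overline m^{-\beta}$ on the right-hand side. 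Adding the $L_2(\partial\Omega)$ estimate to the seminorm estimate and using the two decompositions then yields both inequalities of \eref{TcomparedH1/2}; the constants depend only on $r,d$ because that is the case for the constants appearing in Theorem~\ref{T:dH121} and in \eref{discr}.

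I do not expect any genuine obstacle: the statement is a bookkeeping consequence of Theorem~\ref{T:dH121} and \eref{discr} once the norm decompositions are written out. The only point to be careful about is not to obscure the fact that the $L_2(\partial\Omega)$ discretization converges at the strictly faster rate $\overline m^{-\alpha}$ (reflecting the extra half-order of boundary smoothness of $g$), which is precisely what makes it legitimate to absorb it into the dominant optimal recovery rate $\overline m^{-\beta}$ for the full $H^{1/2}(\partial\Omega)$ norm.
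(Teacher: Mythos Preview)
Your proposal is correct and follows exactly the same approach as the paper: the paper's proof is a single sentence invoking the seminorm comparison \eref{comparedH1/2} (i.e., Theorem~\ref{T:dH121}), the $L_2(\partial\Omega)$ comparison \eref{discr}, and the fact that $\alpha>\beta$. Your write-up simply makes explicit the norm decompositions and the inequality $\overline m^{-\alpha}\le \overline m^{-\beta}$ that the paper leaves implicit.
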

\begin{proof} 
The theorem follows from the comparisons 
\eref{comparedH1/2}, 
\eref{discr},  and the fact that $\alpha>\beta$. 
\end{proof}

 \section{A discrete loss function with error control}
 \label{S:dloss}
In section \S\ref{S:NOR2},  we have introduced  several numerical methods for finding a near optimal recovery to the solution $u\in \cU_{\rm data}$  from  the given data. In this section, we analyse an alternative  approach that is based on optimization (such as PINNs). We consider a discrete loss function
$\cL^*$ which is a surrogate for the 
theoretical loss function $\cL_T$, see \eref{Tloss}.  The advantage of   $\cL^*$ is
that it can be computed directly from the data 
$({\bf f},{\bf g})$. 
Let $k,r$ be fixed and let $\cX=G_{k,r}\subset \overline \Omega$ and $\cZ=\overline G_{\bar k,r}$
be the data sites of $\partial\Omega$.  Note that $\bar k$ could be different from $k$. This fixes $\widetilde m=\#(G_{k,r})$ and $\overline m:=\#(\overline G_{\bar k,r})$ 
for the remainder of this section.

  \begin{remark}
    \label{R:convention}
    In this and the following two sections, the results and their proofs  are simplest in the case $d\ge 3$.  When $d=2$,  log  factors appear.
    For this reason, we state the results in correct form (including the log factors) for all cases $d\ge 2$, but give the proofs only for $d\ge 3$.  The proofs for the case $d=2$ are given  in the Appendix, \S\ref{d2}.
\end{remark}

 Let $\|\cdot\|^*_{L_\gamma}$
be the discrete norm, see \eref{dLtau}, introduced in \S\ref{SS:dLtau}  
  for the following choices of $\gamma$,
 \begin{equation}\label{gamma-choice-consistent-pinns}
 \gamma=\begin{cases}
\frac{2d}{d+2}, \quad \quad &d\geq 3, \\ \\
1+[\log (\widetilde m)]^{-1}, \,\,\,\quad &d=2,
 \end{cases}
 \end{equation}
 and let $\|\cdot\|^*_{H^{1/2}}$ be the discrete $H^{1/2}(\partial\Omega)$ introduced in \S\ref{H12}, see \eref{dH1/2}, \eref{dL2boundary}, and  \eref{dH12norm}.   Given the data vectors $({\bf f},{\bf g})$, we introduce  the discrete loss function
\be 
\label{dloss}
\cL^*(v):= \begin{cases}
    \| f+\Delta v\|^*_{L_\gamma(\Omega)}+\|g-v\|^*_{H^{1/2}(\partial \Omega)},\quad \quad \quad \quad &d\geq 3,\\\\
    [1+\log (\widetilde m)]\| f+\Delta v\|^*_{L_\gamma(\Omega)}+\|g-v\|^*_{H^{1/2}(\partial \Omega)}, \quad &d=2,
    \end{cases}
\ee 
which is defined  whenever
$v$ and $\Delta v$ are continuous on 
{$\overline\Omega$}.  

We want to show that making $\cL^*(v)$ small guarantees that $v$ is a good approximation to $u$ in the $H^1(\Omega)$ norm.  As in \cite{binev2024optimal}, such a result requires model class assumptions on $f$ and $g$.  Keeping in mind the remarks of
\S \ref{SS:finalobservation}, we make the model class assumptions   $\cF=U(\cB)$ of \eref{onlyF}
and  $\cG=Tr(U(\overline \cB))$ of \eref{largestG}.

Recall that,  in the case $d\ge 3$  and all $0<p\leq \infty$, or $d=2$ and $1<p\leq \infty$, for a model class $\cF$ of the form \eref{onlyF}, the uniform optimal recovery rate for $\cF$ in $H^{-1}(\Omega)$ is   $\asymp \widetilde m^{-s/d}$, 
$\widetilde m\ge 1$ 
where $s$ is the excess regularity of $\cF$ in $L_{ \delta}(\Omega)$. 
 For a model class  $\cG$ of the form \eref{largestG}, its uniform optimal recovery rate in $H^{1/2}(\partial\Omega)$ is $ \asymp\overline m^{-(\bar s-1)/(d-1)}$ ,where $\bar s-1$ is the excess
 regularity in $H^{1/2}(\partial\Omega)$.  
These model class assumptions on $f,g$ imply that the solution $u$ to \eref{main} is in a model class $\cU$  which has a uniform  optimal recovery rate 
$\max\{\widetilde m^{-\frac{s}{d}}, \overline m^{-(\bar s-1)/(d-1)}\}$.
 We introduce the following notation for a function $v$
\be 
  \label{defnormU}
  \|v\|_{\cU}:= \max\{\|\Delta v\|_{\cB},\|Tr(v)\|_{Tr(\overline \cB)}\}.
  \ee

The following theorem bounds the error $\|u-v\|_{H^1(\Omega)}$ in terms of the discrete loss
$\cL^*(v)$,  provided $\Delta v\in\cB$ and   $v\in \overline\cB$.

\begin{theorem} 
\label{T:dloss}
 Let $u$ be the solution to \eref{main}
 with $f\in \cF=U(\cB)$ of \eref{onlyF}
and  $g\in \cG=Tr(U(\overline \cB))$ of \eref{largestG}.
Given the data $({\bf f}, {\bf g})$ of $f$ and $g$ at grid data sites $(G_{k,r},\overline G_{\bar k,r})$ with $|G_{k,r}|=\widetilde m$ and $|\overline G_{\bar k,r}|=\overline m$, consider the functional $\cL^*$, defined in \eref{dloss}. 
If $v$ is any continuous function
in $H^1(\Omega)$, then
\be 
\label{Tdloss}
\|u-v\|_{H^1(\Omega)}\lesssim \cL^*(v)+\left [1+\| v\|_{\cU}\right] \cR_{\cU}(\widetilde{m}, \overline{m}),
\ee
with the constants in $\lesssim$ independent of $u,v$, $\widetilde m$ and $\overline m$, where
\be
\label{ORnotation}
\cR_{\cU}(\widetilde{m}, \overline{m}) := \begin{cases}
     \max\{ {\widetilde m}^{-\frac{s}{d}},\overline m^{-\frac{\bar s-1}{d-1}}\}, & d \geq 3,\\
    \max\{ \log({\widetilde m})
    {\widetilde m}^{-\frac{s}{2}},\overline m^{-(\bar s-1)}\},  & d = 2.
\end{cases} 
\ee
\end{theorem}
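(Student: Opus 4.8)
The plan is to reduce the bound to the norm equivalence \eref{equiv} and then replace the two continuous norms by their discrete surrogates from \S\ref{S:discrete}. I will describe the argument for $d\ge 3$; per Remark \ref{R:convention}, the case $d=2$ follows the same template but must carry the logarithmic factor coming from the failure of $L_1(\Omega)\hookrightarrow H^{-1}(\Omega)$, and is deferred to the Appendix. First, one may assume $\Delta v\in\cB$ and $Tr(v)\in Tr(\overline\cB)$, since otherwise $\|v\|_\cU=\infty$ and \eref{Tdloss} is vacuous; by Theorem \ref{L:cont} this also guarantees $\Delta v$ is continuous, so $\cL^*(v)$ is well defined. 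Writing $g=Tr(w)$ with $w\in U(\overline\cB)$ (and recalling $f\in U(\cB)$), set $h:=f+\Delta v$ and $e:=g-Tr(v)$. The (quasi-)triangle inequality in $\cB$ and in $Tr(\overline\cB)$, with constants depending only on the model-class parameters, gives $\|h\|_{\cB}\lesssim 1+\|v\|_\cU$ and $\|e\|_{Tr(\overline\cB)}\lesssim 1+\|v\|_\cU$.

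Next, by \eref{equiv}, $\|u-v\|_{H^1(\Omega)}\asymp\|h\|_{H^{-1}(\Omega)}+\|e\|_{H^{1/2}(\partial\Omega)}$. For the domain term I would use the Sobolev embedding \eref{H-1embed}: since $\gamma=\tfrac{2d}{d+2}=\delta$ for $d\ge 3$, we have $\|h\|_{H^{-1}(\Omega)}\lesssim\|h\|_{L_\gamma(\Omega)}$, and since the model class \eref{onlyF} requires $p\ge\delta=\gamma$, the exponent $\alpha_\gamma=\tfrac{s}{d}-(\tfrac1p-\tfrac1\gamma)_+$ in Lemma \ref{L:Ldiscrete} equals $\tfrac{s}{d}$. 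Applying Lemma \ref{L:Ldiscrete} to $h$ on the grid $G_{k,r}$ then yields
$$\|h\|_{L_\gamma(\Omega)}\lesssim\|h\|^*_{L_\gamma}+\|h\|_\cB\,\widetilde m^{-s/d}\lesssim\|f+\Delta v\|^*_{L_\gamma(\Omega)}+(1+\|v\|_\cU)\,\widetilde m^{-s/d}.$$
For the boundary term, Theorem \ref{T:dH12norm} is scale-covariant in its argument (the factor $\|\cdot\|_{Tr(\overline\cB)}$ appears explicitly), so it applies to $e$ after normalizing, and, using $\|g-v\|^*_{H^{1/2}(\partial\Omega)}=\|e\|^*_{H^{1/2}(\partial\Omega)}$ (the discrete norm sees only boundary values), gives
$$\|e\|_{H^{1/2}(\partial\Omega)}\lesssim\|e\|^*_{H^{1/2}(\partial\Omega)}+\|e\|_{Tr(\overline\cB)}\,\overline m^{-\beta}\lesssim\|g-v\|^*_{H^{1/2}(\partial\Omega)}+(1+\|v\|_\cU)\,\overline m^{-\beta},$$
with $\beta=\tfrac{\bar s-1}{d-1}$. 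Adding these two bounds and recalling the definitions \eref{dloss} of $\cL^*$ and \eref{ORnotation} of $\cR_\cU$ proves \eref{Tdloss} for $d\ge3$.

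The one point that needs care is the exponent bookkeeping: one must verify that the recovery rate $\widetilde m^{-\alpha_\gamma}$ produced by the $L_\gamma$-discretization is exactly the advertised $\widetilde m^{-s/d}$ — which is precisely why \eref{onlyF} is restricted to $p\ge\delta$ and why $\cL^*$ uses the index $\gamma=\delta$. In dimension two this matching is only asymptotic: there $\gamma=1+[\log\widetilde m]^{-1}$, the embedding constant in \eref{H-1embed} is $C/(\gamma-1)\asymp\log\widetilde m$, and $\alpha_\gamma$ differs from $s/2$ by $O(1/\log\widetilde m)$ so that $\widetilde m^{-\alpha_\gamma}\asymp\widetilde m^{-s/2}$; multiplying through by the $[1+\log\widetilde m]$ weight that $\cL^*$ carries on the domain term for $d=2$ then reproduces $\cR_\cU(\widetilde m,\overline m)=\max\{\log(\widetilde m)\,\widetilde m^{-s/2},\,\overline m^{-(\bar s-1)}\}$. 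I expect shepherding these logarithms cleanly through the $d=2$ version of the two estimates above to be the main technical nuisance, which is why that case is placed in the Appendix.
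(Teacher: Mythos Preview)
Your proposal is correct and follows essentially the same approach as the paper: start from the equivalence \eref{equiv}, pass from $H^{-1}(\Omega)$ to $L_\gamma(\Omega)$ via \eref{H-1embed}, and then replace both continuous norms by their discrete counterparts using Lemma \ref{L:Ldiscrete} and Theorem \ref{T:dH12norm}, absorbing the resulting $\|f+\Delta v\|_{\cB}$ and $\|g-Tr(v)\|_{Tr(\overline\cB)}$ terms into $1+\|v\|_{\cU}$. Your treatment is in fact slightly more explicit than the paper's about why $\alpha_\gamma=s/d$ under the restriction $p\ge\delta$ in \eref{onlyF}, and about the $d=2$ bookkeeping.
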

\begin{proof}  We only prove this for $d \geq 3$.  From \eref{equiv}, we have
\begin{eqnarray*} 
  \|u-v\|_{H^1(\Omega)}&\lesssim& \|f+\Delta v\|_{H^{-1}(\Omega)} +\|g-Tr(v)\|_{H^{1/2}(\partial\Omega)}
\lesssim \|f+\Delta v\|_{L_\gamma(\Omega)}+
\|g-Tr(v)\|_{H^{1/2}(\partial\Omega)}\nonumber\\
&\lesssim& \left [\|f+\Delta v\|^*_{L_\gamma(\Omega)}+
\|g-Tr(v)\|^*_{H^{1/2}(\Omega)}\right ]+\left[\|f+\Delta v\|_{\cB} \widetilde  m^{-\frac{s}{d}}+\|g-Tr(v)\|_{Tr(\overline \cB)} \overline m^{-\frac{(\bar s-1)}{d-1}}\right],
\end{eqnarray*}
where in the second inequality we used the continuous embedding of $L_\gamma(\Omega)$ into $H^{-1}(\Omega)$ and  in the third inequality we used
the comparisons between continuous and discrete norms (see Lemma \ref{L:Ldiscrete} and Theorem \ref{T:dH121}).   Finally, if we use 
the facts that $\|f\|_{\cB}\le 1$ and
$\|g\|_{Tr(\overline\cB)}\le 1$, we complete the proof of the theorem.
\end{proof}

Let us make some remarks on this theorem. 

\begin{remark}
 If we knew in advance the smoothness class of $f$, or at least the value of $p$, and if this value were $1<p\leq \infty$, then in the case $d=2$ we could modify appropriately the  the  loss $\cL^*$ so that it is independent of $\widetilde m$. This choice would lead to the actual optimal recovery rates 
 $\max\{{\widetilde m}^{-\frac{s}{2}},\overline m^{-(\bar s-1)}\}$ for this class (i.e the logarithm will not be present), see the Appendix, \S\ref{d2}.
\end{remark}

\begin{remark} It is natural that  $\|v\|_{\cU}$ enters into the error bounds  since we need some control on $v$ away from the data.   This means that when we try to obtain a good approximation to $u$ (in $H^1(\Omega)$) through optimization, then
either the norm $\|\cdot\|_{\cU}$ will have to participate in the loss function  as a regularization term,  see \cite{binev2024optimal}, or the approximant has to come from a set with good  restricted approximation properties, see Theorem \ref{T:NOR} and Remark \ref{Rnew}. In both of these situations there does not appear to exist an optimization procedure with a priori guarantees when using deep neural networks, although using shallow networks a priori bounds can be achieved using a greedy algorithm \cite{siegel2023greedy}. In practice, however, once an approximant  $v$ is obtained (say, as a result of a minimization of a loss that does not involve $\|v\|_{\cU}$), we can in principle use \eref{Tdloss} to compute an upper bound for the quality of approximation 
$\|u-v\|_{H^1(\Omega)}$, provided that we can compute an (a posteriori) estimate on $\|v\|_{\cU}$. In fact, similar methods for certifying PINNs solutions have been studied before (see for instance \cite{eirasefficient} and the references therein).
\end{remark}
\begin{remark}
    This error estimator can be used
to monitor the error in any optimization scheme that attempts to minimize the loss $\cL^*(v)$.  For example, suppose that 
  $\Sigma=\Sigma_n$ is a linear space of dimension $n$ or a nonlinear manifold determined by $n$ parameters such that each
$S\in\Sigma$ is continuous and has a continuous Laplacian $\Delta S$. If at any stage of the optimization
procedure we have an $S\in\Sigma_n$, then we can use \eref{Tdloss} to bound the error of $\|u-S\|_{H^1(\Omega)}$.
Of course, a good estimate requires a bound for  $\|S\|_\cU$.
\end{remark}

\begin{remark}\label{remark-section-7-2824}
    Note that for fixed values of $\widetilde m$ and $\overline{m}$, the losses $\cL$ and $\cL^*$ are equivalent up to a constant depending upon the parameters $\widetilde m$ and $\overline{m}$. This means that if the original loss $\cL$ is driven to $0$, i.e. we are perfectly interpolating the data, then both losses give the same error control. However, in the situation where the data is not interpolated, the new loss $\cL^*$ gives independent on $m$ control on the error according to Theorem \ref{T:dloss}, while the original PINNs loss $\cL$ does not.
\end{remark}

 In view of the above remarks, we see that the effectiveness of  using $\Sigma_n$ together with the discrete loss $\cL^*$ to provide an approximation to $u$
 is not simply governed by 
  the error
 $E(\cU,\Sigma_n)_{H^1(\Omega)} $ 
  of approximating the class $\cU$ 
 by $\Sigma_n$ in the $H^1(\Omega)$
 but rather  by a form of restricted approximation
 which  involves $\|\cdot\|_\cU$ \cite{binev2024optimal}.  

\section{Implications for Loss Function Design}
\label{S:implications}

In this section, we discuss the implications of Theorem \ref{T:dloss} on the design of loss functions for the solution of elliptic PDEs using PINNs. Theorem \ref{T:dloss} implies that the loss function $\cL^*$ given in \eqref{dloss} gives an a posteriori control on the error up to the optimal recovery rate. However, there are numerous factors that need to be taken into account when designing an appropriate loss function, including the ease of training and implementation. For this reason, directly optimizing the loss $\cL^*$ may not be the best choice in practice.

We observe that any loss function $\cL$ which upper bounds $\cL^*$ also gives a posteriori error control via Theorem \ref{T:dloss}. With this in mind, we consider the following variants of PINNs for elliptic PDEs. 
We consider the case $d\geq 3$. Similar discussion holds when $d=2$, taking into account the form of $\cL^*$ in \eqref{dloss}:
\begin{itemize}
    \item The original PINNs formulation with an adjustable weight $\lambda > 0$ on the boundary terms, 
\begin{equation}\label{original-weighted-pinns}
\cL_{sq,\lambda}(v) := \left[\frac{1}{\widetilde m}\sum_{i=1}^{\widetilde m} [\Delta v (\bx_i)+f(\bx_i)]^2\right]+\left[\frac{\lambda}{\overline{m}} \sum_{i=1}^{\overline m} [v(\bz_i)-g(\bz_i)]^2\right].
\end{equation}
Note that the case $\lambda = 1$ corresponds to the original formulation of PINNs \cite{raissi2019physics}  and is equivalent to the (square of the) loss $\cL$, see \eref {square-root-loss-definition}. Numerous heuristic strategies for choosing the weight $\lambda$ have been explored in a variety of works, see \cite{de2023operator,wang2021understanding,wang2022and}.
\item The loss function $\cL^*$ in \eqref{dloss} with exponent $\tau \geq 1$
\begin{equation}\label{consistent-pinns-formulation}
\begin{split}
    \cL_{sq,\tau}^*(v) := \!\!\left [\frac{1}{\widetilde m}\sum_{i=1}^{\widetilde m} |f(\bx_i)+\Delta v(\bx_i)|^\tau \right ]^{2/\tau}\!\!\!\!&+
 \left [\frac{1}{\overline m^2}\sum_{\substack{i,j=1 \\ i\neq j}}^{\overline m}
 \frac{|[g-v](\bz_i)-[g-v](\bz_j)|^2}{|\bz_i-\bz_j|^2}\right ]\\
 &+\left[\frac{1}{\overline m}\sum_{j=1}^{\overline m}|g(\bz_j)-v(\bz_j)|^2\right].
\end{split}
\end{equation}
When $\tau = \gamma$ given in \eqref{gamma-choice-consistent-pinns}, this 
is equivalent to the (square of the) loss $\cL^*$.
\end{itemize}
Note that all of these loss functions are squared, which is typically done to allow for easier optimization (the notation $sq$ here is a shorthand for squared).

We first observe that the $L_\gamma$-norm is bounded by the $L_\tau$-norm for any $\tau \geq \gamma$ on a finite measure space, such as the bounded domain $\Omega$, or the normalized empirical measure at the data points $\cX$. This means that 
\be
\label{hat}
(\cL^*)^2 \lesssim\cL_{sq,\tau}^*\quad \hbox{for any}\quad \tau \geq \gamma,
\ee
and thus
Theorem \ref{T:dloss} gives an a posteriori bound on the solution error in terms of $\cL_{sq,\tau}^*$. This means that we can switch to a different norm $L_\tau$ with $\tau \geq \gamma$ on the domain term in the loss $\cL^*$ and retain an a posteriori error guarantee.

The original PINNs loss $\cL_{sq,\lambda}$ with fixed, 
 independent on $\overline m$,
$\lambda$ is not an upper bound (with constant independent of $\overline{m}$ or $\widetilde{m}$) on the loss $(\cL^*)^2$, and thus does not give an a posteriori error estimate via Theorem \ref{T:dloss}. This is due to the fact that the discrete $L_2$-norm is not an  upper bound for the discrete $H^{1/2}$-norm on the boundary used in $\cL^*$. However, by letting $\lambda$ depend upon the number  $\overline{m}$ of the boundary collocation points, one can correct this. Indeed, 
 it follows from \eref{hat} with $\tau=2>\gamma=\frac{2d}{d+2}$ that 
$$
(\cL^*)^2 
\lesssim\cL^*_{sq,2}\lesssim  \cL_{sq,\lambda(\overline{m})}, \quad \hbox{where}\quad 
\lambda(\overline{m}) := \overline{m}^{1/(d-1)},
$$
and the 
constants in $\lesssim$ are  independent of $\overline{m}$. The last inequality  is easily proved by observing that
\begin{equation}
    \frac{1}{\overline m^2}\sum_{\substack{i,j=1 \\ i\neq j}}^{\overline m}
 \frac{|u(\bz_i)-u(\bz_j)|^2}{|\bz_i-\bz_j|^d} \leq \frac{2}{\overline m^2}\sum_{\substack{i,j=1 \\ i\neq j}}^{\overline m}
 \frac{|u(\bz_i)|^2 + |u(\bz_j)|^2}{|\bz_i-\bz_j|^d} \lesssim \overline{m}^{-\frac{d-2}{d-1}}\sum_{i=1}^{\overline{m}} |u(\bz_i)|^2.
\end{equation}
\begin{remark}\label{r:PINNs-weighting}
The 
value   $(\cL_{sq,\lambda(\overline{m})}(v))^{1/2}$ can be used in Theorem \ref{T:dloss} instead of $\cL^*(v)$. This indicates how to \textbf{correctly weight} the boundary and domain terms depending on the number of collocation points in the original PINNs formulation, i.e. to use the loss

\be\label{Lnew}
\cL_{sq,\lambda(\overline m)}(v) = \left[\frac{1}{\widetilde m}\sum_{i=1}^{\widetilde m} [\Delta v (\bx_i)+f(\bx_i)]^2\right]+\left[\overline{m}^{(2-d)/(d-1)}\sum_{i=1}^{\overline m} [v(\bz_i)-g(\bz_i)]^2\right].
\ee
\end{remark}

All of these alternative loss functions provide an upper bound for the loss $\cL^*_{sq,\gamma}$. Consequently, if the (restricted) optimization problem were solved exactly, the loss $\cL^*_{sq,\gamma}$ would reach a lower value than these alternatives and would give the best a posteriori error bound up to OR rates. However, in practice, the optimization of neural networks introduces numerous complications, and it is not a priori clear which loss function is best. In \S\ref{numerical-section}, we present numerical experiments comparing the practical performance of these loss functions.

Finally, we remark that our analysis of loss functions is performed with respect to the $H^1$-error. This same analysis can be performed for different error norms as well (see for instance \cite{zeinhofer2023unified} for the $H^{1/2}$-norm), but we leave this for future work.


  
\section{A numerical optimal recovery algorithm}
 \label{S:NOR}
 
Let $\cU$ be the model class \eref{defcU}, determined by
$\cF$ and $\cG$,   where $\cF$ is a maximal model class \eref{onlyF},  \eref{onlyF1} and $\cG$ is a maximal model class \eref{largestG}.   
 To build a numerical algorithm for the recovery of $u\in\cU_{\rm data}$, we want to  minimize the discrete loss $\cL^*$ over a suitable set $\Sigma_n$.  Let  $M\ge 1$ and let $\widetilde u$ be such that 
\be\label{dL*}
\widetilde u\in \{ \argmin_{\|v\|_{\cU}\le M}\cL^*(v)\}.
\ee
Clearly,  $u$ is a solution to this problem since $\|u\|_{\cU}\leq 1$ and $\cL^*(u)=0$. Thus, 
$\cL^*(\widetilde u)=0$ and it follows from \eref{Tdloss} that 
\be 
\label{L*satisfies}
\|u-\widetilde  u\|_{H^1(\Omega)}\lesssim M \cR_{\cU}(\tilde m, \bar m),\quad m\ge 1,
\ee
with the implied constant depending  only on $r$ and $d$.  In other words, $\widetilde u$ provides a near optimal recovery when $d\ge 3$ and a near optimal recovery up to a $\log m$ factor when $d=2$.
However, the minimization problem
\eref{dL*} is taken over too large of a set to be numerically viable and so we would like to utilize minimization
over a smaller set. 

Let $\Sigma_n \subset H^1(\Omega)$   be an approximating set.  Here, the primary examples for $\Sigma_n$  are linear spaces of finite dimension $n$ or a nonlinear manifolds depending on $n$ parameters such as NNs.  For $M\ge 1$ 
(typically taken as some fixed number not dependent on $n$), we define the set 
\be 
\label{defStilde1}
  \Sigma_n(M):=\{S\in\Sigma_n: \|S\|_{\cU} \leq M\}\subset \Sigma_n,
\ee 
 and consider the solution $\widehat S$ to the minimization problem
\be 
\label{dL*min1}
\widehat S\in\{ \argmin_{S\in\Sigma_n(M)} \cL^*(S)\}.
\ee 
In order to present our results, we denote by 
\be
\label{restapprox}
E(v, \Sigma_n(M)):= \begin{cases}
    \inf_{S\in  \Sigma_n(M)}   
\left (\|\Delta v-\Delta S\|_{L_\gamma(\Omega)}+\|Tr(v-S)\|_{H^{1/2}(\partial\Omega)}\right ), & d \geq 3,\\ \\
\inf_{S\in  \Sigma_n(M)}   
\left ((1+\log(\widetilde{m}))\|\Delta v-\Delta S\|_{L_\gamma(\Omega)}+\|Tr(v-S)\|_{H^{1/2}(\partial\Omega)}\right ), & d = 2,
\end{cases}
\ee 
the error of simultaneously approximating $v$ and $\Delta v$. We remark that in the case $d = 2$ and $1 < p\leq \infty$, this analysis can be modified to remove the logarithm. Then, for  the  model class $\cU$ of $H^1(\Omega)$, we denote by 
$$
E(\cU, \Sigma_n(M)):= \sup_{v\in\cU}E(v,\Sigma_n(M)),
$$
the {\it error of restricted approximation} of the class $\cU$ by elements from $\Sigma_n(M)$.
We have the following theorem.
\begin{theorem}
    \label{T:NOR}
Let $\Sigma_n$ be a set in $H^1(\Omega)$   and let $\Sigma_n(M)$ be defined by  \eref{defStilde1}for a fixed value  $M\ge 1$. If  $\widehat S$ is any function from  \eref{dL*min1} then
\be
\label{minimize3}
\|u- \widehat S\|_{H^1(\Omega)}\lesssim M \cR_{\cU}(\widetilde m,\overline m)+  E(u, \Sigma_n(M)),
\ee
with the constant in $\lesssim$  depending only on $r,d$.
Moreover, if the approximation set $ \Sigma_n(M)$ is such that  $E(\cU,\Sigma_n(M))\le C\cR_{\cU}(\widetilde m,\overline m)$  for some
$C$ depending only on $r,d$,
then $\widehat S$ provides a uniform near optimal recovery (up to  logarithmic factors in the case $d = 2$)    for the model class $\cU$.
 
\end{theorem}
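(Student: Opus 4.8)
The plan is to bootstrap from the a posteriori estimate of Theorem~\ref{T:dloss} using the minimality of $\widehat S$ and the discrete/continuous norm comparisons of~\S\ref{S:discrete}. First I would apply~\eref{Tdloss} to $v=\widehat S$. Since $\widehat S\in\Sigma_n(M)$ we have $\|\widehat S\|_{\cU}\le M$, and because $M\ge 1$ this yields
\[
\|u-\widehat S\|_{H^1(\Omega)}\lesssim \cL^*(\widehat S)+(1+M)\,\cR_{\cU}(\widetilde m,\overline m)\lesssim \cL^*(\widehat S)+M\,\cR_{\cU}(\widetilde m,\overline m).
\]
It therefore suffices to prove $\cL^*(\widehat S)\lesssim E(u,\Sigma_n(M))+M\,\cR_{\cU}(\widetilde m,\overline m)$.

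To bound $\cL^*(\widehat S)$ I would use the minimality~\eref{dL*min1}, namely $\cL^*(\widehat S)\le\cL^*(S)$ for every $S\in\Sigma_n(M)$, and then estimate $\cL^*(S)$. Because $-\Delta u=f$ in $\Omega$ and $Tr(u)=g$ on $\partial\Omega$, for such an $S$ we have $f+\Delta S=-(\Delta u-\Delta S)$ and $g-S=Tr(u-S)$, so (taking $d\ge3$)
\[
\cL^*(S)=\|\Delta u-\Delta S\|^*_{L_\gamma(\Omega)}+\|Tr(u-S)\|^*_{H^{1/2}(\partial\Omega)}.
\]
Next I would replace the discrete norms by continuous ones: Lemma~\ref{L:Ldiscrete} applied to $\Delta u-\Delta S\in\cB$ (here $\alpha_\gamma=s/d$, since $\gamma=\delta\le p$ for the classes~\eref{onlyF}), and Theorem~\ref{T:dH12norm} applied to $Tr(u-S)\in Tr(\overline\cB)$, produce remainder terms $\|\Delta u-\Delta S\|_{\cB}\,\widetilde m^{-s/d}$ and $\|Tr(u-S)\|_{Tr(\overline\cB)}\,\overline m^{-(\bar s-1)/(d-1)}$. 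The constraint $\|S\|_{\cU}\le M$, together with $\|f\|_{\cB}\le1$ and $\|g\|_{Tr(\overline\cB)}\le1$, bounds both of these norms by $1+M\lesssim M$, hence
\[
\cL^*(S)\lesssim \|\Delta u-\Delta S\|_{L_\gamma(\Omega)}+\|Tr(u-S)\|_{H^{1/2}(\partial\Omega)}+M\,\cR_{\cU}(\widetilde m,\overline m).
\]
Taking the infimum over $S\in\Sigma_n(M)$ converts the first two terms into $E(u,\Sigma_n(M))$ (see~\eref{restapprox}); combined with the first display this proves~\eref{minimize3}. The case $d=2$ is identical, carrying along the $(1+\log\widetilde m)$ factors already present in~\eref{dloss} and~\eref{restapprox}.

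For the final assertion, if $E(\cU,\Sigma_n(M))\le C\,\cR_{\cU}(\widetilde m,\overline m)$, then $E(u,\Sigma_n(M))\le C\,\cR_{\cU}(\widetilde m,\overline m)$ since $u\in\cU$, and~\eref{minimize3} gives $\|u-\widehat S\|_{H^1(\Omega)}\lesssim\cR_{\cU}(\widetilde m,\overline m)$ with a constant depending only on $r,d,M,C$. Choosing $\widetilde m\asymp\overline m\asymp m/2$ and invoking Theorem~\ref{T:ORu} (and the summary in~\S\ref{SS:finalobservation}) identifies $\cR_{\cU}(\widetilde m,\overline m)$ with the optimal recovery rate $R^*_m(\cU)_{H^1(\Omega)}$ when $d\ge3$, and with that rate up to a $\log m$ factor when $d=2$; thus $\widehat S$ is a uniform near optimal recovery of $u$.

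I do not expect a genuine obstacle here: once one recognizes that $\cL^*(S)$ is exactly the discrete norm of the residual $(\,f+\Delta S,\,g-S\,)=(-(\Delta u-\Delta S),\,Tr(u-S)\,)$, the result follows by bookkeeping around Theorem~\ref{T:dloss}. The only delicate point is that the discrete/continuous comparisons of~\S\ref{S:discrete} require the Laplacian (resp.\ the trace) to lie in $\cB$ (resp.\ $\overline\cB$); this is precisely why the minimization is restricted to $\Sigma_n(M)$ rather than all of $\Sigma_n$, and why the factor $M$, entering through $\|v\|_{\cU}$, cannot be avoided in~\eref{minimize3}.
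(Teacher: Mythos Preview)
Your proposal is correct and follows essentially the same route as the paper: apply Theorem~\ref{T:dloss} to $\widehat S$, use the minimality $\cL^*(\widehat S)\le\cL^*(S)$, bound $\cL^*(S)$ via the discrete/continuous norm comparisons of Lemma~\ref{L:Ldiscrete} and Theorem~\ref{T:dH12norm} (absorbing the $\cB$- and $Tr(\overline\cB)$-norms using $\|S\|_{\cU}\le M$ and $\|u\|_{\cU}\le 1$), and then take the infimum over $S\in\Sigma_n(M)$. Your added details (the residual identity $f+\Delta S=-(\Delta u-\Delta S)$, the observation $\alpha_\gamma=s/d$ for the classes~\eref{onlyF}, and the explanation of why the restriction to $\Sigma_n(M)$ is essential) are all correct and make the argument slightly more explicit than the paper's own presentation.
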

\begin{proof}  
We only prove this in the case $d \geq 3$. In the case $d = 2$ the same modifications can be made as before. We use    \eref{Tdloss} for $\widehat S$  to obtain
\be\label{eq31}
 \|u-\widehat S\|_{H^1(\Omega)}\lesssim    \cL^*( \widehat S)+ (1+M) \cR_{\cU}(\widetilde m,\overline m).
\ee
 For  all  $S\in  \Sigma_n(M)$,    Lemma \ref{L:Ldiscrete} and Theorem \ref{T:dH12norm} give the bound
$$
\cL^* (\widehat S)\le   \cL^*(S)\lesssim \|\Delta u-\Delta S\|_{L_\gamma(\Omega)}+ \|Tr(u-S)\|_{H^{1/2}(\partial\Omega)}+ CM \cR_{\cU}(\widetilde m,\overline m),
$$
with the constant in $\lesssim$  depending only on $r,d$.
Taking an infimum over all $S\in  \Sigma_M$ gives  
\be\label{eq5}
  \cL^* (\widehat S)\lesssim E(u, \Sigma_n(M))+ M\cR_{\cU}(\widetilde m,\overline m),
\ee
which together with \eref{eq31} proves the theorem. 
\end{proof}
\begin{remark}
\label{Rnew}
 Theorem \ref{T:NOR}
guarantees that $\widehat S$ is a near optimal recovery of $u$ provided that for the model class $\cU$ we can make
 $E(\cU, \Sigma_n(M))$ small   by taking $n$ large.   Note that the quantity $E(\cU, \Sigma_n(M))$ determines how well the elements of the  set $\Sigma_n(M)$ simultaneously approximate the Laplacians  (in the $L_\gamma(\Omega)$ norm) and the traces  (in $H^{1/2}(\partial \Omega)$ norm) of the elements in $\cU$. Thus, the question is whether
$\Sigma_n$ has favorable restricted  approximation properties.
 There is a large literature of approximation theoretic results concerning how efficiently deep neural networks can approximate various classes of functions (see \cite{devore2021neural} and the references therein).
  When error is measured in the $L_p$-norm, optimal rates have been obtained \cite{siegel2023optimal,yarotsky2018optimal,shen2022optimal} in the case of approximating functions from Besov spaces using deep ReLU neural networks. There has also been recent work on the problem when error is measured in another Besov space, although in this case a smoother activation function must be taken \cite{yang2024nearly,yang2023nearly}. 
On the other hand, the question of restricted approximation that is used in the present paper has seemingly not been studied.
 We conjecture that the techniques developed in \cite{siegel2023optimal,yarotsky2018optimal,shen2022optimal} can be appropriately modified to give the same rates   for restricted approximation with a fixed suitably large value for $M$ (independent of $n$).
\end{remark}

\section{Numerical Experiments}\label{numerical-section}
 In this section, we present numerical experiments testing the loss functions described in \S\ref{S:implications} for the practical solution of elliptic PDEs. In our experiments, we solve the two-dimensional Poisson equation
 \be\label{poisson-equation-experiments}
    -\Delta u = f~\text{in $\Omega$},~u = g~\text{on $\partial\Omega$},
\ee
on $\Omega = (0,1)^2$ for various right-hand sides $f$ and boundary values $g$. We take a uniform grid of collocation points on $\Omega$, where $\widetilde m$ is the total number of collocation points (where we penalize the residual of the equation), and $\overline m$ is the number of collocation points which lie on the boundary (where we penalize the residual of the boundary values). We then test the following four loss functions, described in \S\ref{S:implications}:
\begin{itemize}
    \item The original (unweighted) PINNs loss: $\cL_{sq,1}$;
    \item A properly weighted PINNs loss: $\cL_{sq,\lambda(\overline{m})}$, see \eref{Lnew};
    \item The loss $\cL^*_{sq,\gamma}$ from \eqref{dloss} (since $d = 2$ we choose here $\gamma = 1.1$);
    \item The loss $\cL^*_{sq,2}$ (the loss function \eqref{dloss} with $L_2$ on the domain).
\end{itemize}
Since we are in the case $d=2$, the latter two loss functions involve a logarithm which is ignored in the numerical experiments.
For all losses, we use the same neural network and training algorithm. We report both the final loss values achieved and the final relative error in the $H^1(\Omega)$-norm (estimated using a large number of collocation points).

Let us remark on the relative cost of implementing each of these four loss functions. In all our experiments, the efficiency of optimizing each of them was roughly the same. The reason for this is that although the discretization of the $H^{1/2}$-norm on the boundary used in the losses $\cL^*_{sq,\gamma}$ and $\cL^*_{sq,2}$ is more expensive to compute and differentiate, the training process is dominated by the cost of evaluating the network and its derivatives at all of the collocation points rather than evaluating the loss based upon these values. For this reason, using losses which discretize the $H^{1/2}$-norm doesn't significantly slow down the network training.
\subsection{Network Architecture and Training Algorithm}
When training PINNs, it is known that the optimization of the neural network can be particularly sensitive and tricky \cite{rathorechallenges,krishnapriyan2021characterizing,wang2021understanding,wang2022and,de2023operator}, and much research has been devoted to the development of improved training algorithms for PINNs \cite{mcclenny2023self,davi2022pso,kopanivcakova2024enhancing,muller2023achieving,zhang2023element,wang2023expert}. In this section, we describe our network architecture, initialization, and training algorithm in detail.

In our experiments, we use a deep neural network with  $L$ hidden layers of width $W$. The first layer uses a $\tanh$ activation function, while the remaining layers are residual layers \cite{he2016deep} with the ReLU$^3$ activation function. The network is initialized as follows:
\begin{itemize}
\item The parameters in the first layer are randomly initialized from a normal distribution with variance $1$.
\item For the residual layers with ReLU$^3$ activation function, the parameters are randomly initialized from a normal distribution with variance $$
    \sqrt{2}\left(\frac{2}{15}\right)^{1/6}\frac{1}{\sqrt{LW}}.
$$
In analogy with Xavier and He initialization \cite{glorot2010understanding,he2015delving}, this ensures that the variance of the output of each layer gets multiplied by a factor of $(1 + 2/L)$ in each residual layer, which guarantees that the expected magnitude of the neural network function at initialization is bounded.
\item In the final layer,  the weights are initialized randomly from a normal distribution with variance $1/\sqrt{W}$, which ensures that the network output has the same variance as the output of the final hidden layer.
\end{itemize}

We train this network applying  the energy natural gradient descent algorithm described in \cite{muller2023achieving}, using a line search to find the step size in each iteration. The full code and all experimental details can be found at \href{https://github.com/jwsiegel2510/consistent-PINNs}{https://github.com/jwsiegel2510/consistent-PINNs}.

\subsection{Results}
 {\bf Experiment 1:} In our first experiment, we choose $f$ and $g$ so that the exact solution is given by
$$
u(x,y)= e^x \cos(y), \qquad (x,y) \in \Omega = (0,1)^2.
$$
The function is harmonic, i.e $f \equiv 0$. The function $u$ is very smooth and so for this problem it suffices to use a relatively small number of collocation points and a relatively small network. We test a grid of collocation points with $5,10,15$ and $20$ points in each direction. We use a small network ($L = 3$, $W = 5$) and iterate for $500$ steps. We present the results in Table \ref{harmonic-experimental-results}.

\begin{table}[ht!]
    \centering
    \begin{tabular}{c|c||c|c||c|c||c|c||c|c}
      \multirow{2}{*}{$\widetilde{m}$} & \multirow{2}{*}{$\overline{m}$} & $\cL_{sq,1}$ & $\cL_{sq,1}$ & $\cL_{sq,\lambda(\overline{m})}$ & $\cL_{sq,\lambda(\overline{m})}$ &$\cL^*_{sq,\gamma}$ & $\cL^*_{sq,\gamma}$ & $\cL^*_{sq,2}$ & $\cL^*_{sq,2}$\\
      & & Rel Error & Loss & Rel error & Loss &Rel error & Loss & Rel error & Loss \\
      \hline
      25 & 16 & $0.80\%$ & $2.8\cdot 10^{-5}$ & $0.68\%$ & $1.1\cdot 10^{-4}$ & $0.33\%$ & $2.5\cdot 10^{-5}$ & $0.30\%$ & $3.1\cdot 10^{-5}$\\
      \hline
      100 & 36 & $0.43\%$ & $1.7\cdot 10^{-5}$ & $0.21\%$ & $2.3\cdot 10^{-5}$ & $0.21\%$ & $2.0\cdot 10^{-5}$ & $0.25\%$ & $2.2\cdot 10^{-5}$\\
      \hline
      225 & 56 & $0.41\%$ & $1.5\cdot 10^{-5}$ & $0.15\%$ & $2.1\cdot 10^{-5}$ & $0.23\%$ & $1.7\cdot 10^{-5}$ & $0.27\%$ & $2.0\cdot 10^{-5}$\\
      \hline
      400 & 76 & $0.41\%$ & $1.7\cdot 10^{-5}$ & $0.086\%$ & $7.18\cdot 10^{-5}$ & $0.22\%$ & $1.9\cdot 10^{-5}$ & $0.33\%$ & $3.6\cdot 10^{-5}$\\
      \end{tabular}
    \caption{Experimental results on the harmonic problem. All relative errors are in the $H^1(\Omega)$-norm, computed using a grid with $500$ points in each direction. When the number of grid points is large, the properly weighted PINNs loss $\cL_{sq,\lambda(\overline{m})}$ results in significantly lower error than the other loss functions.} 
    \label{harmonic-experimental-results}
\end{table}

\noindent
{\bf Experiment 2:} In our second experiment, we choose $f$ and $g$ so that the exact solution is given by
$$
u(x,y)= \frac{e^{2(x+y)}\cos(2\pi(y-x))}{1+8x^2 + y^2}, \qquad (x,y) \in \Omega = (0,1)^2.
$$
In this case, both $f$ and $g$ are non-zero and  the solution is significantly less smooth. To capture this, we use a slightly larger network ($L = 3$ and $W = 10$). All other parameters are the same as in the first experiment. The results are presented in Table \ref{smooth-experimental-results}.

\begin{table}[ht!]
    \centering
    \begin{tabular}{c|c||c|c||c|c||c|c||c|c}
      \multirow{2}{*}{$\widetilde{m}$} & \multirow{2}{*}{$\overline{m}$} & $\cL_{sq,1}$ & $\cL_{sq,1}$ & $\cL_{sq,\lambda(\overline{m})}$ & $\cL_{sq,\lambda(\overline{m})}$ &$\cL^*_{sq,\gamma}$ & $\cL^*_{sq,\gamma}$ & $\cL^*_{sq,2}$ & $\cL^*_{sq,2}$\\
      & & Rel Error & Loss & Rel error & Loss &Rel error & Loss & Rel error & Loss \\
      \hline
      25 & 16 & $26\%$ & $1.2\cdot 10^{-7}$ & $24\%$ & $2.2\cdot 10^{-6}$ & $24\%$ & $3.3\cdot 10^{-7}$ & $22\%$ & $1.6\cdot 10^{-7}$\\
      \hline
      100 & 36 & $1.5\%$ & $3.7\cdot 10^{-3}$ & $2.9\%$ & $3.7\cdot 10^0$ & $1.1\%$ & $1.6\cdot 10^{-2}$ & $0.69\%$ & $2.0\cdot 10^{-2}$\\
      \hline
      225 & 56 & $1.4\%$ & $5.5\cdot 10^{-3}$ & $1.1\%$ & $4.1\cdot 10^{-2}$ & $1.6\%$ & $1.1\cdot 10^{-1}$ & $0.90\%$ & $1.1\cdot 10^{-2}$\\
      \hline
      400 & 76 & $2.3\%$ & $1.7\cdot 10^{-2}$ & $0.88\%$ & $3.2\cdot 10^{-2}$ & $1.5\%$ & $4.7\cdot 10^{-2}$ & $0.75\%$ & $2.2\cdot 10^{-2}$\\
      \end{tabular}
    \caption{Experimental results on the smooth problem. All relative errors are in the $H^1(\Omega)$-norm, computed using a grid with $500$ points in each direction. Aside from large errors when the number of collocation points is relatively small, the consistent loss with $L_2$-penalty on the domain, $\cL^*_{sq,2}$, performs best on this experiment. } 
    \label{smooth-experimental-results}
\end{table}

\noindent{\bf Experiment 3:} Finally, for our last experiment, we choose $f$ and $g$ so that
the exact solution is given by
$$
u(x,y)= 1000x(1-x)y(1-y) \left ( (x-0.5)^2+(y-0.5)^2 \right)^{9/4}, \qquad (x,y) \in (0,1)^2.
$$
The solution $u$ vanishes on the boundary (i.e. $g = 0$) and  is much less smooth than the preceding two experiments. For this reason, we use a significantly larger network ($L = 3$, $W = 15$), more collocation points ($10$, $20$, $30$ and $40$ in each direction), and more training iterations ($1000$ steps). The results are presented in Table \ref{non-smooth-experimental-results}. 

\begin{table}[ht!]
    \centering
    \begin{tabular}{c|c||c|c||c|c||c|c||c|c}
      \multirow{2}{*}{$\widetilde{m}$} & \multirow{2}{*}{$\overline{m}$} & $\cL_{sq,1}$ & $\cL_{sq,1}$ & $\cL_{sq,\lambda(\overline{m})}$ & $\cL_{sq,\lambda(\overline{m})}$ &$\cL^*_{sq,\gamma}$ & $\cL^*_{sq,\gamma}$ & $\cL^*_{sq,2}$ & $\cL^*_{sq,2}$\\
      & & Rel Error & Loss & Rel error & Loss &Rel error & Loss & Rel error & Loss \\
      \hline
      100 & 36 & $8.4\%$ & $9.0\cdot 10^{-4}$ & $3.8\%$ & $3.3\cdot 10^{-3}$ & $10.4\%$ & $1.0\cdot 10^{-2}$ & $18.9\%$ & $6.1\cdot 10^{-3}$\\
      \hline
      400 & 76 & $18.8\%$ & $5.2\cdot 10^{-2}$ & $1.8\%$ & $4.4\cdot 10^{-2}$ & $5.1\%$ & $2.2\cdot 10^{-2}$ & $5.3\%$ & $2.9\cdot 10^{-2}$\\
      \hline
      900 & 116 & $10.9\%$ & $1.1\cdot 10^{-2}$ & $2.4\%$ & $3.9\cdot 10^{-2}$ & $3.1\%$ & $1.9\cdot 10^{-2}$ & $2.5\%$ & $2.1\cdot 10^{-2}$\\
      \hline
      1600 & 156 & $6.1\%$ & $6.3\cdot 10^{-3}$ & $1.2\%$ & $7.5\cdot 10^{-3}$ & $11.1\%$ & $6.4\cdot 10^{-2}$ & $1.4\%$ & $1.1\cdot 10^{-2}$\\
      \end{tabular}
    \caption{Experimental results on the non-smooth problem. All relative errors are in the $H^1(\Omega)$-norm, computed using a grid with $500$ points in each direction. We see that the properly weighted PINNs loss, $\cL_{sq,\lambda(\overline{m})}$, and the consistent PINNs with $L_2$-penalty on the domain, $\cL^*_{sq,2}$, perform significantly better than both other loss functions. Interestingly, the performance of the consistent loss $\cL^*_{sq,\gamma}$ deteriorates significantly with the largest number of collocation points.} 
    \label{non-smooth-experimental-results}
\end{table}

Our numerical experiments seem to imply that the optimization method plays a critical role in the final accuracy achieved when using any of these loss functions. Indeed, there is a fairly wide variation in the computed final loss values. 
Accounting for this, in the first two tests, it seems that the correctly weighted PINNs loss $\cL_{sq,\lambda(\overline{m})}$ performs the best. The loss \eqref{dloss} with $L_2$-penalty on the residual, $\cL^*_{sq,2}$, is performing similarly well. However, 
more experiments are needed to confirm our observations.

The results of  Experiment 3 are more conclusive. The solution in this case is much less smooth and more difficult to fit using neural networks than in the first two experiments. In this experiment, we observe  that optimizing the correctly weighted PINNs loss $\cL_{sq,\lambda(\overline{m})}$ and the loss \eqref{dloss} with $L_2$-penalty on the domain, $\cL^*_{sq,2}$, result in significantly lower solution error than the other two loss functions. Interestingly, the loss $\cL^*_{sq,\gamma}$ performs nearly as well up until the final experiment where the collocation grid has $40$ points in each direction.

Our interpretation of these results  is the following. The (unweighted) original PINNs loss does not adequately fit the boundary values, while the loss \eqref{dloss} results in a less smooth numerical solution due to the optimization of the non-smooth $L_\gamma$-norm. Both the losses $\cL_{sq,\lambda(\overline{m})}$ and $\cL^*_{sq,2}$ overcome these issues by correctly weighting the boundary error and also resulting in neural network solutions which are smooth. This is corroborated by the solution plots in each case, which are shown in Figures \ref{original-consistent-PINNs-plots} and \ref{weighted-consistent-l2-plots}. Observe  that the solutions minimizing both of the losses $\cL_{sq,1}$ and $\cL^*_{sq,\gamma}$ fail to accurately fit the boundary values. However, the solution for $\cL_{sq,1}$ is relatively smooth, while the solution for $\cL^*_{sq,\gamma}$ is significantly less smooth (especially near the boundary). The solutions for both $\cL_{sq,\lambda(\overline{m})}$ and $\cL^*_{sq,2}$ are fairly smooth and fit the boundary values well.

\begin{figure}[ht!]
    \includegraphics[scale = 0.5]{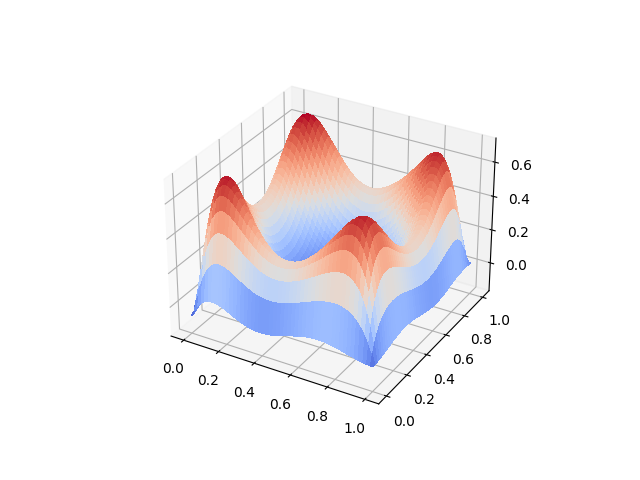}
    \includegraphics[scale = 0.5]{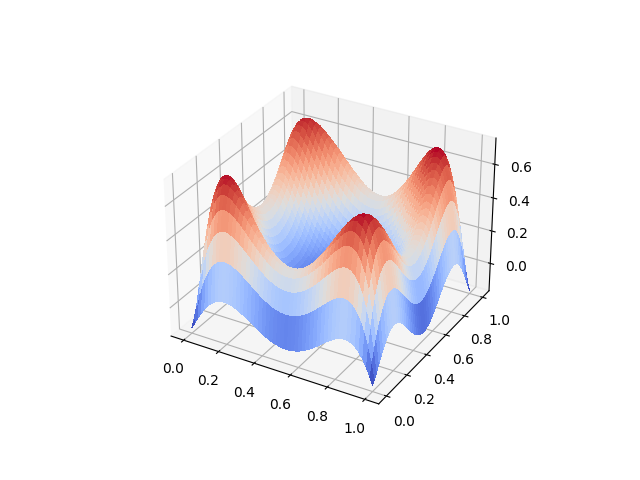}
    \label{original-consistent-PINNs-plots}
    \caption{Experiment 3: Solution plots for the (unweighted) original PINNs loss $\cL_{sq,1}$ (left) and the consistent PINNs loss $\cL^*_{sq,\gamma}$ (right). We see that both fail to accurately fit the boundary values. Note that  the consistent PINNs solution is much less smooth near the boundary.}
\end{figure}

\begin{figure}[ht!]
    \includegraphics[scale = 0.5]{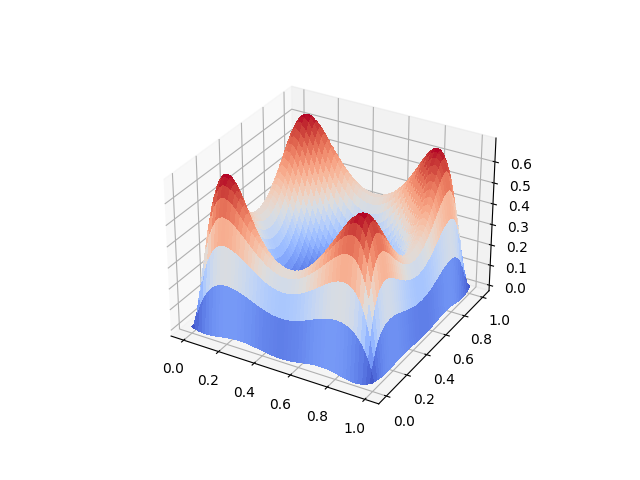}
    \includegraphics[scale = 0.5]{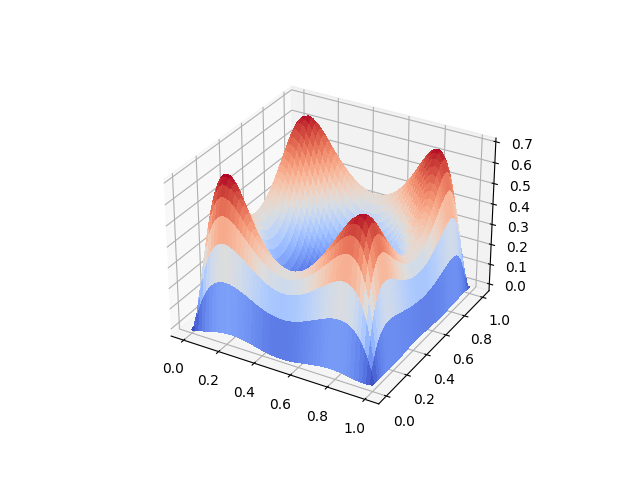}
    \label{weighted-consistent-l2-plots}
    \caption{Experiment 3: Solution plots for the correctly weighted PINNs loss $\cL_{sq,\lambda(\overline{m})}$ (left) and the consistent PINNs loss with $L_2$-penalty on the domain $\cL^*_{sq,2}$ (right). We see that both accurately fit the boundary values and remain fairly smooth.}
\end{figure}

Based upon our numerical tests, we recommend using either the correctly weighted PINNs loss $\cL_{sq,\lambda(\overline{m})}$, see \eref{Lnew},  or the loss \eqref{dloss} with $L_2$-penalty on the domain, $\cL^*_{sq,2}$, especially for difficult problems with limited regularity. However, our observations are naturally quite limited and significant additional experiments are needed to draw stronger conclusions.

\section{Concluding Remarks}
\label{S:CR}
We have investigated collocation methods for solving  elliptic PDEs of the form \eref{main}.    
Under model class assumptions of the form $f\in\cF$
and $g\in \cG$, which are both unit balls of Besov spaces, we showed that there is an optimal
error (called the error of optimal recovery) in recovering $u$ in the $H^1(\Omega)$ norm. We determined this optimal recovery error for the various model classes. Our theoretical analysis was performed for $\Omega = (0,1)^d$,  but it is well-known that the results  carry over to any Lipschitz domain (see \cite{NT}).

We then turned to the study of theoretical
algorithms which would yield an optimal recovery.  The most prominent of these methods is to
minimize a certain loss function $\cL$ that measures how well we have fit the data observations.  While the typical loss function used in PINNs does not provide an a posteriori bound on the error, we have introduced a variety of modified loss functions which do provide such a bound. We call the use of any one of these theoretically justified loss functions consistent PINNs. Finally, we perform several  numerical experiments which compare these different loss functions and show their favorable performance 
in comparison to the typical loss used in PINNs.


We now wish to make some further comments that one can view as serious caveats to
the above theory.  The first and most serious objection to the above development is
that we do not provide a numerical algorithm for finding a minimizer or near minimizer
of the suggested losses over $\Sigma_n(M)$.  This is the same objection that can be
made for almost all learning problems and certainly for PINNs.  The usual approach
in PINNs 
is to use gradient descent or some modification of gradient descent to solve the minimization of the loss.  Unfortunately, there is no proof of convergence or accuracy when applying gradient descent to non-convex losses such as $\cL$ or $\cL^*$.  This has not impeded the use of gradient descent in practice with much empirical success. We have utilized such techniques as well in our numerical examples.

We do not  enter into a discussion of the art of using gradient descent in optimization.  However, we want to address two issues that are relevant to this paper.   The first is the fact that we must optimize the loss over $\Sigma_n(M)$ rather than $\Sigma_n$ itself to gain the proof of optimality.  This is a serious numerical burden.  We have ignored this issue in our numerical experiments much like one ignores the lack of a provable
gradient descent algorithm.  One could implement the restricted approximation 
by adding a penalty to the consistent losses to guarantee that at each step of the optimization one remains in the constrained set $\Sigma_n(M)$ (see \cite{binev2024optimal}) but this does not ease the computational burden and one is still faced with a lack of convergence of the numerical optimization.

The second issue is understood by practitioners but not usually pointed out in the theory. Neural network spaces are a very unstable manifold in the sense  that changing parameters very slightly can have a huge
effect on the output of the neural network.  That is, the mapping from parameters to output in neural networks is very unstable.  This instability appears more in deep networks than in shallow networks.  Avoiding this lack of stability is an art in practice that manifests itself in judiciously choosing the starting parameters
and the learning rates (step size in gradient descent).  The theoretical aspects of
this lack of stability, and how to avoid the instability theoretically, is addressed in a series of recent papers \cite{CDPW, PW, PW1,PW2}.  However, there remains a serious gap between theoretical algorithms for learning and their numerical implementation.

\section{Appendix}

In this appendix, we provide proof of the results stated in sections $\S\ref{S:Besov}$
and \S\ref{S:OR}.

\subsection{Local approximation by polynomials}
\label{SS:polyapprox}

There are many important results on the approximation of functions in Besov classes. We will  use approximation by  piecewise polynomials.  We begin by describing local
polynomial approximation.  

For any integer $r\ge 1$, we let $\cP_r^d := \cP_r$ denote the linear space of algebraic polynomials of order $r$ (total degree $r-1$), namely, 
$$
\cP_r:=\left\{\sum_{|{\bk}|_1<r}a_{\bk}\bx^{\bk}, \,\,a_{\bk}\in\R\right\}, 
\quad \hbox{where}\quad \bx^{\bk}:=x_1^{k_1}\cdots x_d^{k_d}, \quad \bk:=(k_1,\dots,k_d), \quad k_j\geq 0,
\quad |\bk|_1:=\sum_{j=1}^d k_j.
$$
Note that $\omega_r(P,t)_{p}=0$, $t\ge 0$, for all $P\in\cP_r$.
If $I$ is any cube in $\R^d$ and $f\in L_p(I)$, $0<p\le \infty$,  we denote by 
\be 
\label{Er}
E_r(f,I)_p:=\inf_{P\in\cP_r}\|f-P\|_{L_p(I)},
\ee 
 the error of approximation of $f$ on $I$  in the $\|\cdot\|_{L_p(I)}$ norm by the elements of $\cP_r$.  A well known result in approximation theory, commonly referred to as Whitney's theorem, says
that for any $f\in L_p(I)$ with $I$ a cube with sidelength $\ell_I$, we have
\be 
\label{WTheorem}
cE_r(f,I)_p\le \omega_r(f,\ell_I)_{L_p(I)}\le CE_r(f,I)_p,
\ee 
with the constants $c,C$ depending only on $r,d$ and also $p$  if  $p$  is close to $0$.  Whitney's theorem usually only refers to the lower inequality in \eref{WTheorem}.  However, the upper inequality follows trivially since 
$$\omega_r(f,\ell_I)_{L_p(I)}=\omega_r(f-P,\ell_I)_{L_p(I)}\le C\|f-P\|_{L_p(I)},
$$
holds for any polynomial $P\in\cP_r$.

We also will  use the following modified form of Whitney's theorem.  For any cube $I\subset \R^d$,
any $0<p<\infty$, and $f\in L_p(I)$,  we define 
\be 
\label{avemod}
\widetilde w_r(f,t)^p_{L_p(I)}:=t^{-d}\int_{{\rm \bf h}\in[-t,t]^d}\int_{I_{{\color{blue} r}\rm\bf h}} |\Delta_{\rm\bf h}^r(f,\bx)|^p\, d{\rm\bf x}\, d{\rm \bf  h},
\ee 
where $I_{ r{\rm \bf h}}:=\{{\rm\bf x}: [{\rm\bf x},{\rm\bf x}+{r}{\rm\bf h}]\subset I\}$.  This is called {\it the averaged modulus of smoothness of $f$}. It is   known that $w_r$ is equivalent to  $\widetilde w_r$ (see \S 2 of \cite{DS}),
\be 
\label{ave,pd1}
c\widetilde w_r(f,t)_{L_p(I)}\le \omega_r(f,t)_{L_p(I)}\le C\widetilde w_r(f,t)_{L_p(I)},\quad 0<t\leq 1,
\ee 
where again the constants $c,C$ depend only on $r$ and $p$ and can be taken absolute when $r$ is fixed and $0<p_0\le p\le \infty$ with $p_0$ fixed. Thus,   Whitney's theorem holds with $\omega_r$ replaced by $\widetilde w_r$
\be 
\label{WT1}
cE_r(f,I)_p\le \widetilde w_r(f,\ell_I)_{L_p(I)}\le CE_r(f,I)_p.
\ee 

Before proceeding further, let us remark on why we introduce the averaged modulus of smoothness $\widetilde w_r$.  
The advantage of $\widetilde w_r$ over $\omega_r$ is that $\widetilde w_r^p$ it is set subadditive. We shall use this in the following form.  Let $\Omega=(0,1)^d$ and let $\cI$ be a collection of subcubes of $\Omega$ which form a partition of $\Omega$.  Then, from the definition of $\widetilde w_r$. we have 
    \be
    \label{wsubadd}
\sum_{I\in\cI} \widetilde w_r(f,t)^p_{L_p(I)}\le \widetilde w_r(f,t)^p_{L_p(\Omega)},\quad t>0.
\ee 
This same subadditivity holds when  $\cI$ is replaced by
a set $\cT$ of simplices which form a partition   of $\Omega$.

If $I\subset\Omega$ is a cube, we  say that $P_I\in\cP_r$ is a near best $L_p(I)$ approximation to $f$ with constant $\lambda\ge 1$ if
\be 
\|f-P_I\|_{L_p(I)}\le \lambda E_r(f,I)_p.
\ee 
It is shown in Lemma 3.2 of \cite{DP} that if $P_I\in\cP_r$ is near best in $L_{ p}(I)$ with constant $\lambda$, then it is also near best in $L_{\overline p}(I)$ whenever $\overline p\ge p$,
i.e.,
\be 
\|f-P_I\|_{L_{\overline p}(I)}\le C\lambda E_r(f,I)_{\overline p},
\ee 
with the constant $C$ depending only on $r,d$ and $p$. 
 This constant does not depend on $I$ or $\bar p$.

Another important remark is that any near best approximation $P_I$ with constant $\lambda$ is near best on any larger cube $J$ which contains $I$ in the sense that
\be 
\|f-P_I\|_{L_{ p}(J)}\le C\lambda E_r(f,J)_{ p},
\ee 
where now $C$ depends additionally on $|J|/|I|$,  see Lemma 3.3 in \cite{DP}. Note that even though Lemma 3.2 and Lemma 3.3 in \cite{DP} are stated for polynomials of coordinate degree $<r$, they also hold for polynomials of total degree $<r$.   

In summary, a near best $L_p(I)$
approximation is also near best on large cubes $J$ containing $I$  and larger values $\overline p\ge p$.
We shall use these facts going forward.

\subsection{Polynomial norms and inequalities}
\label{SS:polynorms}
We need good bounds on the constants that appear when comparing norms.  For this, we recall the following equivalences, see  (3.2) of
\cite{DS}   or Lemma 3.1 and Lemma 3.2 in \cite{DSmono}.  For any cube $I$ in $\R^d$ and any function $g\in L_p(I)$, we introduce the normalized $L_p$ norm  
 \be
\label{npnorm}
\|g\|^\#_{L_p(I)}:= |I|^{-1/p}\|g\|_{L_p(I)}.
\ee
For any cube $I\subset \R^d$ and any polynomial $P\in\cP_r$, and any $0<p,q\le \infty$,
we have
\be 
\label{eqPrnorm1}
  \|P\|^{\#}_{L_q( I)}\asymp  \|P\|^{\#}_{L_p(I)} ,
 \ee 
 with absolute constants of equivalency provided   $  q,p\ge p_0$ with $p_0>0$ fixed.

 One can also compare Besov norms of polynomials.  For example, we will use the fact that if   $P\in \cP_\ell$ and $I$ is a cube in $\R^d$,  then for every $s>0$, we have
 \be 
 \label{Bpoly}
 |P|_{B_q^s(L_p(I))} \le C \ell_I^{-s}\|P\|_{L_p(I)},
 \ee
 with a constant independent of $I$. This is proved  by dilation (see e.g. Corollary 5.2 of \cite{DP}).

\subsection{Besov spaces and piecewise polynomial approximation}
\label{Bpwp}

We now recall how membership in $ B_p^s(\Omega)$ guarantees a rate of approximation by piecewise polynomials. In actuality, the membership of a function in a Besov space can be characterized by its rate of approximation by piecewise polynomials.  However, we only need the results that prove that a function in a Besov space can be approximated by piecewise polynomials with a certain accuracy
and therefore only concentrate on proving results of this type.

  Recall that for  $k\ge 0$, we denoted by $\cD_k$ the  partition of $\Omega$ into  dyadic cubes $I$ of side length $2^{-k}$, where the dyadic cube $I\in\cD_k$  is the tensor product
  of the dyadic intervals $[(j-1)2^{-k},j2^{-k})$, $1\le  j\leq 2^k$.
  We   then define 
$$
\cS_k=\cS_k(r)
$$ 
to be the space of all piecewise polynomials of order $r$ that are subordinate to the partition $\cD_k$. In other words, a  function $S\in \cS_k$ if and only if the restriction of $S$ on each $I\in \cD_k$ belongs to 
$\cP_r$.   

If $I\in\cD_k$,   let $P_I\in\cP_r$ be the polynomial of best $L_p(I)$ approximation to $f$  and let us define 
 \be 
 \label{defSk1}
 S_k:=S_k(f):=\sum_{I\in\cD_k}P_I\chi_I\in \cS_k,
 \ee 
 where $\chi_I$ is the characteristic function of $I$.  Notice that $S_k(x)$ is defined pointwise for each
 $x\in\Omega$.
We extend $S_k$ to all of $\overline\Omega$ by continuity.

The following lemma holds.

\begin{lemma}
    \label{approx}
 Let $0<p\le \infty$, $s>0$, and let $r\geq 2$ be any fixed integer strictly larger than $s$. If a  function 
 $f\in B_p^s=B_\infty^s(L_p(\Omega))$, with $\Omega=(0,1)^d$, then 
 \be 
   \label{Tbesovpwpchar}
   \dist(f,\cS_k(r))_{L_p(\Omega)}\le C|f|_{B_p^s(\Omega)}2^{-ks},\quad k\ge 0,
   \ee 
 with $C$ depending only on $p$ and $s$. 
 \end{lemma}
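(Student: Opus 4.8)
The plan is to use the piecewise best‑approximant $S_k=S_k(f)$ of \eref{defSk1} as the competitor, so that $\dist(f,\cS_k(r))_{L_p(\Omega)}\le\|f-S_k\|_{L_p(\Omega)}$, and to estimate the right‑hand side. Since the dyadic cubes $I\in\cD_k$ are pairwise disjoint and cover $\Omega$ up to a null set (the continuous extension of $S_k$ to $\overline\Omega$ playing no role in $L_p$), for $0<p<\infty$ one has the exact identity
\[
\|f-S_k\|_{L_p(\Omega)}^p=\sum_{I\in\cD_k}\|f-P_I\|_{L_p(I)}^p=\sum_{I\in\cD_k}E_r(f,I)_p^p,
\]
with the sum replaced by a supremum when $p=\infty$; here $P_I$ is the best $L_p(I)$ polynomial approximant of order $r$, so $\|f-P_I\|_{L_p(I)}=E_r(f,I)_p$.

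First I would bound each local term by Whitney's theorem in its averaged form \eref{WT1}: every $I\in\cD_k$ has side length $\ell_I=2^{-k}$, hence $E_r(f,I)_p\le C\,\widetilde w_r(f,2^{-k})_{L_p(I)}$ with $C=C(r,d,p)$. Raising to the power $p$, summing over $I\in\cD_k$, and invoking the set subadditivity \eref{wsubadd} of $\widetilde w_r^p$ gives
\[
\sum_{I\in\cD_k}E_r(f,I)_p^p\le C^p\sum_{I\in\cD_k}\widetilde w_r(f,2^{-k})_{L_p(I)}^p\le C^p\,\widetilde w_r(f,2^{-k})_{L_p(\Omega)}^p.
\]
This is the one structural point of the argument: routing the per‑cube Whitney estimates through the \emph{averaged} modulus $\widetilde w_r$ is exactly what allows them to be summed, since the ordinary modulus $\omega_r$ is not subadditive over a partition. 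Next, the equivalence \eref{ave,pd1} yields $\widetilde w_r(f,2^{-k})_{L_p(\Omega)}\le C\,\omega_r(f,2^{-k})_{L_p(\Omega)}$. When $p=\infty$ one skips $\widetilde w_r$ altogether: directly $\|f-S_k\|_{L_\infty(\Omega)}=\sup_I E_r(f,I)_\infty\le C\sup_I\omega_r(f,2^{-k})_{L_\infty(I)}\le C\,\omega_r(f,2^{-k})_{L_\infty(\Omega)}$, using \eref{WTheorem} and monotonicity of the modulus in the domain.

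Finally I would use the hypothesis $f\in B_p^s=B_\infty^s(L_p(\Omega))$. Because $r\ge 2$ and $r>s$ force $r\ge r(s)$, the (quasi‑)semi‑norm defined from $\omega_r$ is equivalent to $|f|_{B_p^s(\Omega)}$ (the equivalence of Besov semi‑norms for different admissible orders, noted after \eref{Bsemi11}), and the membership characterization \eref{member} gives $\omega_r(f,2^{-k})_{L_p(\Omega)}\le C|f|_{B_p^s(\Omega)}2^{-ks}$. Chaining all the inequalities produces $\|f-S_k\|_{L_p(\Omega)}\le C|f|_{B_p^s(\Omega)}2^{-ks}$ with $C=C(p,s)$ (and the fixed $r,d$), which is \eref{Tbesovpwpchar}. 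I do not expect a genuine obstacle; the only care needed is the bookkeeping between the order $r$ appearing in $S_k$ and in the moduli versus the order $r(s)$ implicit in $|f|_{B_p^s}$, and remembering to pass through $\widetilde w_r$ before summing the local errors.
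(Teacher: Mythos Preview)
Your proposal is correct and follows essentially the same argument as the paper: use $S_k$ from \eref{defSk1} as the competitor, bound each local error via Whitney through the averaged modulus $\widetilde w_r$, sum using the set subadditivity \eref{wsubadd}, and convert back to $\omega_r$ via \eref{ave,pd1} before invoking the Besov membership \eref{member}/\eref{member1}. The only cosmetic difference is that the paper writes the Whitney step as $\|f-P_I\|_{L_p(I)}\le C\omega_r(f,2^{-k})_{L_p(I)}\le C\widetilde w_r(f,2^{-k})_{L_p(I)}$ rather than going straight to \eref{WT1}, and your remark on the $r$ versus $r(s)$ bookkeeping is a small clarification the paper leaves implicit.
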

 \begin{proof}
 Consider $S_k$ defined in \eref{defSk1}. 
 From Whitney's theorem, we have
  \be 
 \label{pwp1}
 \|f-P_I\|_{L_p(I)}\le C\omega_r(f,2^{-k})_{L_p(I)}\le C\widetilde w_r(f,2^{-k})_{L_p(I)},
 \ee 
 where $\widetilde w_r$ is the averaged modulus of smoothness.  
 
 If $p<\infty$, we raise both
 sides of \eref{pwp1} to the power $p$ and then sum over $I\in \cD_k$ to obtain
 \be 
 \label{pwp2}
 \|f-S_k\|^p_{L_p(\Omega)}\lesssim \sum_{I\in\cD_k}\widetilde w_r(f,2^{-k})^p_{L_p(I)}\lesssim \widetilde w_r(f,2^{-k})^p_{L_p(\Omega)}\lesssim w_r(f,2^{-k})^p_{L_p(\Omega)}\lesssim |f|^p_{B_p^s(\Omega)}2^{-ksp},
 \ee 
 where we used the subadditivity \eref{wsubadd} and the fact that $\cD_k$ is a partition of $\Omega$. 
Here we have also used the equivalence \eref{ave,pd1} of $\widetilde w_r$ and $w_r$ and \eref{member1}. This proves  \eref{Tbesovpwpchar} in the case
 $p<\infty$.  
  When $p=\infty$, these inequalities   follow directly from \eref{pwp1}   and the fact that   $\|\cdot\|_{L_\infty(I)}\le \|\cdot\|_{L_\infty(\Omega)} $ for each $I\subset \Omega$.
\end{proof}

 \subsection{Piecewise polynomial approximation in $L_\tau(\Omega)$}
\label{SS:pwpL2}
 
All constants $C$ appearing in this section depend at most on $s$ and $p$
and may change at each occurrence.

\begin{theorem}
    \label{T:SkLtau} Let $S_k$ be defined as in {\rm \eref{defSk1}}.
    If $s>d/p$ and $p\le \tau\le \infty$, then  we have
    \be 
    \label{TSkLtau}
    \|f-S_k\|_{L_\tau(\Omega)}\le C |f|_{B_p^s(\Omega)} 2^{-k(s-d/p+d/\tau)},\quad f\in B_p^s(\Omega).
    \ee
\end{theorem}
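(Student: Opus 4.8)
The plan is to telescope $f-S_k=\sum_{j>k}(S_j-S_{j-1})$ over dyadic levels and to upgrade, level by level, the $L_p(\Omega)$ estimate already obtained inside the proof of Lemma~\ref{approx} to an $L_\tau(\Omega)$ estimate. The one new ingredient is the equivalence \eref{eqPrnorm1} of the $L_p$ and $L_\tau$ normalized (quasi-)norms on polynomials over a cube, which is exactly what produces the extra factor $2^{jd(1/p-1/\tau)}$ at level $j$. Write $\beta:=s-d/p+d/\tau$; since $\tau\ge p$ and $s>d/p$ one has $\beta\ge s-d/p>0$, and this positivity is what will make the resulting geometric series converge.

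For $j>k$ the function $S_j-S_{j-1}$ is a piecewise polynomial subordinate to $\cD_j$: on a cube $I\in\cD_j$ with parent $I'\in\cD_{j-1}$ it equals $P_I-P_{I'}\in\cP_r$. Applying \eref{eqPrnorm1} on $I$ (recall $|I|=2^{-jd}$), then the quasi-triangle inequality, together with $\|f-P_{I'}\|_{L_p(I)}\le\|f-P_{I'}\|_{L_p(I')}$ since $I\subset I'$, gives
\[
\|S_j-S_{j-1}\|_{L_\tau(I)}\lesssim 2^{jd(1/p-1/\tau)}\Big(\|f-P_I\|_{L_p(I)}+\|f-P_{I'}\|_{L_p(I')}\Big),
\]
with constants depending only on $r,d$ (and on $p$ when $p$ or $\tau$ is close to $0$). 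I would then sum over $I\in\cD_j$: using the quasi-triangle inequality in $\ell_\tau$ and the monotonicity $\|\cdot\|_{\ell_\tau}\le\|\cdot\|_{\ell_p}$ (valid since $\tau\ge p$), the right-hand side is bounded by $2^{jd(1/p-1/\tau)}\big[(\sum_{I\in\cD_j}\|f-P_I\|_{L_p(I)}^p)^{1/p}+(\sum_{I\in\cD_j}\|f-P_{I'}\|_{L_p(I')}^p)^{1/p}\big]$. The first sum equals $\|f-S_j\|_{L_p(\Omega)}^p\lesssim |f|_{B_p^s(\Omega)}^p2^{-jsp}$ by \eref{pwp2} (the estimate inside the proof of Lemma~\ref{approx}); the second sum is $2^d\sum_{I'\in\cD_{j-1}}\|f-P_{I'}\|_{L_p(I')}^p$, because each parent has $2^d$ children, and hence is also $\lesssim |f|_{B_p^s(\Omega)}^p2^{-jsp}$. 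This yields $\|S_j-S_{j-1}\|_{L_\tau(\Omega)}\lesssim |f|_{B_p^s(\Omega)}2^{-j\beta}$. For $\tau=\infty$ (which includes $p=\infty$) the same chain works with maxima over $I\in\cD_j$ in place of $\ell_\tau$-sums, using $\max_I(\cdots)\le(\sum_I(\cdots)^p)^{1/p}$, and the exponent is $\beta=s-d/p$, consistent with $d/\tau=0$.

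Finally, since $\beta>0$, summing the geometric series gives $\sum_{j>k}\|S_j-S_{j-1}\|_{L_\tau(\Omega)}\lesssim |f|_{B_p^s(\Omega)}2^{-k\beta}$, and it remains to justify $f-S_k=\sum_{j>k}(S_j-S_{j-1})$ in $L_\tau(\Omega)$. The partial sums equal $S_N-S_k$; since $\sum_j\|S_j-S_{j-1}\|_{L_\tau(\Omega)}<\infty$, the sequence $(S_N)$ is Cauchy, hence convergent, in $L_\tau(\Omega)$, and a fortiori in $L_p(\Omega)$ because $\Omega$ is bounded and $\tau\ge p$; by Lemma~\ref{approx} its $L_p$ limit is $f$, so $S_N\to f$ in $L_\tau(\Omega)$ too. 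Therefore $\|f-S_k\|_{L_\tau(\Omega)}\le\sum_{j>k}\|S_j-S_{j-1}\|_{L_\tau(\Omega)}\lesssim |f|_{B_p^s(\Omega)}2^{-k\beta}$, which is \eref{TSkLtau}. The only step that is more than bookkeeping is the polynomial (quasi-)norm inequality \eref{eqPrnorm1} on the cells $I$ — the inverse/Nikolskii inequality responsible for the loss $2^{jd(1/p-1/\tau)}$ — together with the (routine but necessary) identification of the telescoping series with $f-S_k$; note that the hypothesis $s>d/p$ is used only to keep $\beta>0$ for all admissible $\tau$ (including $\tau=\infty$), and also, via Theorem~\ref{L:cont}, to ensure $f$ has a continuous representative for the point-evaluation applications downstream.
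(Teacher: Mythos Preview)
Your proof is correct and follows essentially the same approach as the paper's: telescope $f-S_k$ into dyadic increments $S_j-S_{j-1}$, use the polynomial norm equivalence \eref{eqPrnorm1} cellwise to pass from $L_p$ to $L_\tau$ with the Nikolskii factor $2^{jd(1/p-1/\tau)}$, then sum the resulting geometric series. The paper re-routes the cellwise $L_p$ bound through the averaged modulus $\widetilde w_r$ and its subadditivity rather than directly invoking $\|f-S_j\|_{L_p(\Omega)}\lesssim 2^{-js}$ as you do, and it handles the case $p\le\tau<1$ separately via $\|\cdot\|_{L_\tau}^\tau$-subadditivity (your final step $\|f-S_k\|_{L_\tau}\le\sum_{j>k}\|S_j-S_{j-1}\|_{L_\tau}$ tacitly assumes $\tau\ge 1$), but these are cosmetic differences.
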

\begin{proof} 
Let us fix any $f\in B_p^s(\Omega)$ and consider the 
corresponding $S_k=S_k(f)$, see   \eref{defSk1}.   It was proven in Lemma \ref{approx}  that
$$
\|f-S_k\|_{L_p(\Omega)}\le C|f|_{B_p^s(\Omega)}2^{-ks},\quad k\ge 0.
$$
Let $R_0:=S_0$ and for each $\bx\in\overline\Omega$, we define
\be 
\label{defRk}
R_k(\bx):=S_k(\bx)-S_{k-1}(\bx), \quad k\geq 1. 
\ee 
The functions $R_k$ are defined for all $\bx\in\overline\Omega$ and are  in $\cS_k$, $k\ge 0$,  
\be 
\label{Tk}
\|R_k\|_{L_p(\Omega)}\le \|f-S_k\|_{L_p(\Omega)}+\|f-S_{k-1}\|_{L_p(\Omega)}\le C|f|_{B_p^s(\Omega)}2^{-ks},\quad k\ge 1,
\ee
and of course $\|R_0\|_{L_p(\Omega)}\le C\|f\|_{L_p(\Omega)}$.
It follows that
\be 
\label{repf1} 
f=\sum_{k=0}^\infty R_k
\ee
with the series converging in $L_p(\Omega)$.
We now consider the following cases for $\tau$.

\noindent
{\bf Case 1:} $\tau=\infty$.
   On each dyadic cube $I\in\cD_k$, we have 
  \be 
\nonumber
  R_k(\bx)= Q_I(\bx):=P_I(\bx)-P_{\overline I}(\bx),\quad \bx\in I,
  \ee 
  where $\overline I\in\cD_{k-1}$ is the parent of 
  $I$.   From Whitney's theorem and \eref{eqPrnorm1},
we have for every $I\in \cD_k$, 
$k\geq 1$,
\begin{eqnarray}
\label{Tk0}
\|Q_I\|_{C(I)}&\le& C|I|^{-1/p}\|Q_I\|_{L_p(I)}
\le C2^{kd/p}[\|f-P_I\|_{L_p(I)}+ \|f-P_{\overline I}\|_{L_p(I)}]
\nonumber\\
&\le& C 2^{kd/p} \omega_r(f,2^{-k+1})_{L_p(\overline I)}.
\end{eqnarray} 
This  gives that for each $\bx\in\overline\Omega$, we have
\be
\label{Tk1}
|R_k(\bx)| \le C 2^{kd/p} \omega_r(f,2^{-k+1})_{L_p(\Omega)} \le C|f|_{B_p^s(\Omega)}2^{-k(s-d/p)} , \quad   k\ge 1.
\ee 
Thus the series
\eref{repf1} converges in $L_\infty(\overline\Omega)$ and also pointwise to a limit function $\widetilde f $ and for each $\bx\in \overline \Omega$, we have
\be
\label{Tk2}
|\widetilde f(\bx)-S_k(\bx)| \le \sum_{j>k}|R_j(\bx)| \le C|f|_{B_p^s(\Omega)}2^{-k(s-d/p)} ,\quad  \ k\ge 0.
\ee
Since the same bound holds for $\| f-S_k\|_{L_\infty(\overline\Omega)}$, we have $f=\widetilde f$, a.e. on $\overline\Omega$, 
and we have proven the theorem in the case $\tau=\infty$.

In going further, we refer to $\widetilde f$, which is defined pointwise on $\overline\Omega$ as the representer of $f$.
We shall see in Theorem \ref{L:cont} that $\widetilde f$  is continuous and in Lip $\alpha$, $\alpha=s-d/p$.

\noindent
{\bf Case 2:} $p\leq \tau<\infty$.
Similarly to \eref{Tk0} and using the comparison of polynomial norms of \eref{eqPrnorm1}, we find that
\begin{eqnarray} 
\nonumber
\int_\Omega |R_k|^\tau &=&\sum_{I\in\cD_k}\int_I |Q_I|^\tau 
 \le C^\tau 2^{kd(\tau/p-1)} \sum_{I\in\cD_k} \|Q_I\|_{L_p(I)}^\tau
\le C^\tau 2^{kd(\tau/p-1)} \sum_{I\in\cD_k}  \widetilde w_r(f,2^{-k+1})_{L_p(\overline I)}^\tau \nonumber\\
&\le& C^\tau 2^{kd(\tau/p-1)} \left [\sum_{I\in\cD_k}  \widetilde w_r(f,2^{-k+1})_{L_p({\overline I})}^{p}\right ]^{\tau/p}
 \le 2^{d\tau/p}C^\tau 2^{kd(\tau/p-1)} \left [\sum_{\overline I\in\cD_{k-1}}  \widetilde w_r(f,2^{-k+1})_{L_p(  \overline I)}^{p}\right ]^{\tau/p}
\nonumber\\
&\le&  2^{d\tau/p}C^\tau 2^{kd(\tau/p-1)}\widetilde w_r(f,2^{-k+1})^\tau_{L_p(\Omega)} , 
\nonumber
\end{eqnarray} 
where we used the subadditivity \eref{wsubadd}.  In other words, for any $\tau\ge p$, we have
\be
\label{Tk4}
\|R_k\|_{L_\tau(\Omega)}\le C 2^{kd(1/p-1/\tau)} |f|_{B_p^s(\Omega)}2^{-ks},\quad 
 k\ge 1.
\ee
Since $f-S_k=\sum_{j> k}R_j$, when $\tau\ge 1$, we can add these estimates to arrive at
\be 
\nonumber
\|f-S_k\|_{L_\tau(\Omega)}\le \sum_{j> k}\|R_j\|_{L_\tau(\Omega)}\le  C|f|_{B_p^s(\Omega)}2^{-k(s-d/p+d/\tau)},
\ee 
which is the desired inequality.  When $p<\tau<1$, we use \eref{Tk4} to obtain
$$
\|f-S_k\|^\tau_{L_\tau(\Omega)}\leq 
\sum_{j>k}\|R_j\|^\tau_{L_\tau(\Omega)}\leq C^\tau|f|^\tau_{B^s_p(\Omega)}\sum_{j>k}2^{-j\tau(s-d/p+d/\tau)}\leq C^{\tau}|f|^\tau_{B^s_p(\Omega)}2^{-k\tau(s-d/p+d/\tau)},
$$
 which completes the proof in this case.
\end{proof}

 \begin{remark} 
 \label{R:repf1}
 Theorem {\rm \ref{T:SkLtau}} is valid for more general  $S_k\in \cS_k$, for example, for $S_k=\sum_{I\in\cD_k}P_I\chi_I$, where 
 \be 
 \label{provided}
 \|f-P_I\|_{ L_\tau(I)}\le C2^{kd(1/p-1/\tau)}  \widetilde w_r(f,2^{-k})_{L_p(I)}, \quad I\in\cD_k(\Omega),
\ee
 where $C$ does not depend on $I$, see 
  \eref{Tk0}.
 \end{remark}

 \subsubsection{Proof of Theorem \ref{L:cont}}
 \label{T2.3}
For any $f\in B_p^s(\Omega)$, consider the corresponding $S_k(f)$, defined in \eref{defSk1} and  the function $\widetilde f$, see \eref{Tk2}. As shown  in Theorem \ref{T:SkLtau}, $f=\widetilde f$ a.e..
We want to observe that the function $\widetilde f$
is continuous on $\Omega$ and moreover,
\eref{fcont} holds.
  Indeed, it is enough to consider $0<t\le 1$.    Let $\bx\in\Omega$ and $|\bh|\le t$ be such that $[\bx,\bx+r\bh]\subset \Omega$.  Let $J$ be the smallest   cube that contains $[\bx,\bx+r\bh]$. Then, we have   $ \ell_J\le rt$.  We argue as in
the proof of \eref{Tk1} and \eref{Tk2}, replacing $\Omega$ by $J$ to show that there is a polynomial $P_J\in\cP_r$ such that
$$\sup_{{\bf x}\in J}|\widetilde f({\bf x})-P_J({\bf x})| \le 
C|f|_{B_p^s(J)}\ell_J^{s-d/p}\le C|f|_{B_p^s(\Omega)}t^{s-d/p}. 
$$
  It follows that
  $$|\Delta_{r\bh}(\widetilde f,{\bf x})|\le C|f|_{B_p^s(\Omega)}t^{s-d/p},$$
  uniformly for ${\bf x}\in\Omega$. This proves that $\widetilde f$ is continuous and \eref{fcont} holds.  
  \hfill $\Box$

 \subsection{Polynomial interpolation}
 \label{SS:PI}
In this section, we prove the results for piecewise polynomial approximation stated in \S \ref{S:Besov}.
\subsubsection{Proof of Theorem \ref{T:interapprox}}
\label{T2.1}

 In view of Theorem \ref{T:SkLtau}, it is enough to show that for any fixed $k\ge 0$, we have
\be
\label{enough0}
\|S_k(f)-S_k^*(f)\|_{L_\tau(\Omega)}\le C|f|_{B_p^s(\Omega)} 2^{-k(s-d/p+d/\tau)},
\ee 
with $C$ here and for the remainder of the proof always denoting a constant not depending on $f$ or $k$,   but depending on $r$ and $d$.
If $I\in\cD_k$,  we let $P_I$ be the best $L_p(I)$ approximation to $f$ by elements of $\cP_r$ and
similarly let $P_I^*$ be the best $C(I)$ approximation to $f$ from $\cP_r$. Recall that by definition $S_k(f)=\sum_{I\in\cD_k}P_I\chi_I$. It follows that if $T\in \cT_k$ and $T\subset I$, then we have
\be 
\label{follows0}
\|S_k^*(f)-S_k(f)\|_{L_\infty(T)} \le\|L_T(f-P_I)\|_{L_\infty(T)}\le C\|f-P_I\|_{L_\infty(T)}\leq C\|f-P_I\|_{L_\infty(I)},
\ee
and therefore 
 \be 
\label{follows1}
\|S_k^*(f)-S_k(f)\|_{L_\infty(I)} \le  C\|f-P_I\|_{L_\infty(I)}.
\ee
We know from Theorem \ref{T:SkLtau}  that $\|f-P_I\|_{L_\infty(I)}\le C|f|_{B_p^s(\Omega)}2^{-k(s-d/p)}$ for each $I\in\cD_k$, and thus  
\be 
\label{enough1}
\|S_k(f)-S_k^*(f)\|_{L_\infty(\Omega)}\le C |f|_{B_p^s(\Omega)} 2^{-k(s-d/p)},
\ee 
which completes the  proof of the theorem when 
when $\tau=\infty$.

To handle the case $p\le \tau<\infty$, we first  give an improved bound for $\|f-P_I\|_{L_\infty(I)}$ when $I\in\cD_k$.  Let $\bx\in I$ be any fixed point.   For each index $j\ge k$, let  $J_j\in\cD_j$ be the dyadic cube that contains $\bx$ and $Q_j:=  P_{J_{j+1}}-P_{J_j}$, $j\ge k$,  $I=J_k$.  Then, arguing as in \eref{Tk0}, we have
\be 
\label{follows2}
| f(\bx)-P_I(\bx)|\le \sum_{j\ge k}\|Q_j\|_{L_\infty(J_{j+1})}\le C\sum_{j\ge k}|J_{j+1}|^{-1/p}\|Q_j\|_{L_p(J_{j+1})}\le C\sum_{j\geq k} 2^{jd/p}\|f-P_{J_j}\|_{L_p(J_j)}.
\ee
Since $\|f-P_{J_j}\|_{L_p(J_j)}\le \widetilde w_r(f,2^{-j})_{L_p(I)}$, this gives
\be 
\label{follows3}
\|f-P_I\|_{L_\infty(I)} \le   C\sum_{j\ge k} 2^{jd/p}\widetilde w_r(f,2^{-j})_{L_p(I)},
\ee
which is the improved bound we want.

We now consider two cases.  
\vskip .1in
\noindent {\bf Case $p\le 1$:}
From \eref{follows1} and \eref{follows3} we derive  
\begin{eqnarray}
\|S_k(f)-S_k^*(f)\|_{L_\infty(I)}&\le& C\|f-P_I\|_{L_\infty(I)} \le   C\sum_{j\ge k} 2^{jd/p}\widetilde w_r(f,2^{-j})_{L_p(I)}\nonumber \\
&\le&  C\left [\sum_{j\ge k} 2^{jd}\widetilde w_r(f,2^{-j})^p_{L_p(I)}\right ]^{1/p}.
\end{eqnarray}
The set subadditivity \eref{wsubadd} of $\widetilde w_r$ then gives
\begin{eqnarray} 
\label{follows4}
\|S_k(f)-S_k^*(f)\|^p_{L_p(\Omega)}&\le &  C2^{-kd} \sum_{I\in\cD_k} \|S_k(f)-S_k^*(f)\|^p_{L_\infty(I)}  
\le   C2^{-kd} \sum_{j\ge k} 2^{jd}\sum_{I\in\cD_k} \widetilde w_r(f,2^{-j})^p_{L_p(I)}\nonumber\\ 
&\le& C2^{-kd} \sum_{j\ge k} 2^{jd}\widetilde w_r(f,2^{-j})^p_{L_p(\Omega)}
\le C|f|^p_{B_p^s(\Omega)}2^{-kd} \sum_{j\ge k} 2^{jd}2^{-jsp}\le C|f|^p_{B_p^s(\Omega)}2^{-ksp}.
\nonumber
\end{eqnarray}
This proves \eref{enough0} for the case $\tau=p$.  For general $\tau \in(p,\infty)$, we use \eref{enough1} and obtain
\be 
\label{follows5}
 |S_k(f)-S_k^*(f)|^\tau\le   |S_k(f)-S_k^*(f)|^{\tau-p} |S_k(f)-S_k^*(f)|^p \le C|f|^{\tau-p}_{B_p^s(\Omega)}2^{-k(s-d/p)(\tau-p)} |S_k(f)-S_k^*(f)|^p.
\ee
We now integrate both sides of this inequality and find that
\be 
\label{follows51}
 \|S_k(f)-S_k^*(f)\|^\tau_{L_\tau(\Omega)}\le   C|f|^\tau_{B_p^s(\Omega)} 2^{-k(s-d/p)(\tau-p)}2^{-ksp}\le   C|f|^\tau_{B_p^s(\Omega)} 2^{-k(s-d/p+d/\tau)\tau}.
\ee
as desired.

\vskip .1in

\noindent
{\bf Case $p\ge 1$:}  From \eref{follows3}, we find for any $\beta\in (0,s-d/p)$ that
\be 
\label{follows6}
 \|f-P_I\|_{L_\infty(I)} \le   C\{\sum_{j\ge k}  2^{-j\beta p'}\}^{1/p'} \{\sum_{j\ge k}  [2^{j(d/p+\beta)}\widetilde w_r(f,2^{-j})^p_{L_p(I}]^p\}^{1/p}, 
\ee
where $p'$ is the conjugate index to $p$, i.e., $1/p+1/p'=1$.  Arguing as in \eref{follows4}, we arrive at

\be 
\label{follows7}
 \|S_k(f)-S_k^*(f)\|_{L_p(\Omega)} \le C|f|_{B_p^s(\Omega)} 2^{-ks}.
\ee
This proves \eref{enough0} for $\tau=p$. In other words, we know \eref{enough0} for $\tau=p$ and $\tau=\infty$. 
 We complete the proof  for general $\tau\in [p,\infty)$ as in \eref{follows51}.  This completes the proof of the theorem.\hfill $\Box$

\subsubsection{Proof of Theorem \ref{T:SkH1}}
\label{T2.2}

    Since $B_q^s(L_p(\Omega))$ is embedded in $B_p^s(\Omega):=B_\infty^s(L_p(\Omega))$, it is enough to prove the theorem when $f\in B_p^s(\Omega)$, i.e., when $q=\infty$.   We know from Theorem \ref{T:interapprox} that
   \be\label{knowH1}
   \|f-S^*_k(f)\|_{L_2(\Omega)}\le C |f|_{B_p^s(\Omega)}2^{-k(s-d/p+d/2)}, \quad k\ge 0.
   \ee 
   We define 
   \be 
   \label{defTk*}
   R_k^*(f):=S_k^*(f)-S_{k-1}^*(f), \quad   k\ge 1.
   \ee
   It follows from \eref{knowH1} that
   \be 
   \label{Tk*}
   \|R_k^*(f)\|_{L_2(\Omega)}\le C |f|_{B_p^s(\Omega)}2^{-k(s-d/p+d/2)}, \quad   k\ge 1.
   \ee 
   Since on each simplex $T\in \cT_{k}$,
   the function $R_k^*(f)$ is a polynomial $Q_T$ from $\cP_r$, we have
   \be\label{knowH11}
   \|Q_T\|_{H^1(T)}\le C2^k\|Q_T\|_{L_2(T)}, \quad T\in\cT_{k}.
   \ee 
   Since the function $R_k^*(f)$ is continuous on $\Omega$, we have (see Theorem 2.1.2 in \cite{ciarlet2002finite})
   $$
   \|R_k^*(f)\|_{H^1(\Omega)}^2\le  C\sum_{T\in\cT_k} \|R_k^*(f)\|_{H^1(T)}^2\le C2^{2k}\sum_{T\in\cT_k}\|R_k^*(f)\|^2_{L_2(T)}= C2^{2k}\|R_k^*(f)\|^2_{L_2(\Omega)}.
   $$
   This gives
   \be
   \label{knowH121}
   \|R_k^*(f)\|_{H^1(\Omega)}\le C2^k\|R_k^*(f)\|_{L_2(\Omega)}\le C |f|_{B_p^s(\Omega)}2^{-k(s-1-d/p+d/2)}, \quad {\quad k\ge 1}.
   \ee 
   Writing, 
   $$
   f-S_k^*(f)=\sum_{j>k}R_j^*(f),
   $$
   and then applying $H^1$ norms gives the theorem because
   $s-d/p+d/2>1$. 
   \hfill $\Box$ 
   
  \subsection{The proofs of the Theorems on optimal recovery} 
   
 This subsection will be devoted to the proof of  the theorems on optimal recovery stated in \S\ref{S:OR}. 

 \subsubsection{The proof of Theorem \ref{T:ORf}}
 \label{T3.1}
 
 It will be convenient to consider only the case $d\ge 2$ to avoid changes in notation.  We leave the case $d=1$ to the reader. The proofs are divided into
  two parts: a proof of the upper bounds and then a proof of the lower bounds.  
\vskip .1in
\noindent 
{\bf Proof of the upper bounds in Theorem \ref{T:ORf}:}  A proof of (i) and (ii) can already be found in \cite{NT}, but  for completeness, we give the following   proof using the results of
\S \ref{S:Besov}.  Recall our  notation $B_p^s(\Omega):=B_\infty^s(L_p(\Omega))$ and the fact that all other 
Besov spaces $B_q^s(L_p(\Omega))$, with the same values for $p$ and $s$, are contained in $B_p^s(\Omega)$.  Therefore, it is enough to prove the
upper bounds when $\cB=B_p^s(\Omega)$.  To prove these upper bounds, we need to exhibit a set of $\widetilde m$ data sites at which we achieve the claimed bound.  We take these data sites as the tensor product grid points $G_{k,r}$ of $\overline\Omega$ where $r > \max(s,1)$.  This will yield the upper bounds when $\widetilde m=(2^kr)^d$.  The upper bounds for the other values of $\widetilde m$ follow from the fact that $R_{\widetilde m}^*$ is monotonically nonincreasing as $\widetilde m\to \infty$ 
\vskip .1in
\noindent{\bf The cases (i) and (ii):} Let $\cT_{k}=\cT_{k}(\Omega)$ be the simplicial partition of $\Omega$ into simplices $T$ as in \S\ref{SS:PI}.  We have shown in    that section, that from the data $(f_i)_{i=1}^{\widetilde m}$ we can create a continuous piecewise polynomial 
\be  
\label{defS2}
S=S_k^*(f)=\sum_{T\in\cT_{k}}L_T(f)\chi_T,
\ee 
where $L_T(f)$ is a polynomial of order $r$ ($L_T(f)\in\cP_r$), gotten by interpolating $f$ at    the data points $T\cap G_{k,r}$.  We have shown (see \eref{Tpwpinter}) that if $\tau\ge p$ then
$$\|f-S\|_{L_\tau(\Omega)}\preceq |f|_{B_p^s(\Omega)}  2^{-kd\alpha_\tau}\preceq |f|_{B_p^s(\Omega)} m^{-\alpha_\tau}. $$ 
This bound also holds for $\tau<p$ since then $B_p^s(\Omega)$ is continuously embedded in $B_\tau^s(\Omega)$.
The function $S$ only depends on the data $(f_i)_{i=1}^{\widetilde m}$.    This shows that every function $\widetilde f\in \cF_{\rm data}$  is within an $L_\tau(\Omega)$ distance $Cm^{-\alpha_\tau}$ of $S$ and hence   the proof of the upper bound in the cases (i) and (ii) follow.
  
\vskip .1in
\noindent{\bf The case (iii):} This is similar to case  (ii) except now we use Theorem \ref{T:SkH1}.

\vskip .1in
\noindent{\bf The case (iv):}  We shall need the following lemma.
   \begin{lemma}
       \label{L:Lqembed}
      Let $\Omega=(0,1)^d$, $d\ge 3$, and let $\gamma=\gamma(d)$ be defined as in \eref{TF4}.  Then, the space $L_\gamma(\Omega)$ is embedded into $H^{-1}(\Omega)$ and we have
       \be 
       \label{Lqembed}
       \|\widetilde f\|_{H^{-1}(\Omega)}\le C\|\widetilde f\|_{L_\gamma(\Omega)},\quad \widetilde f\in L_\gamma(\Omega),
       \ee
       with $C$ depending only on $d$. In the case $d = 2$, then for any $\gamma(2) = 1 < \tau \leq \infty$, the space $L_\tau(\Omega)$ embeds into $H^{-1}(\Omega)$ and we have the bound
       \be 
       \label{Lqembed-d-2}
       \|\widetilde f\|_{H^{-1}(\Omega)}\le C\frac{\tau}{\tau - 1}\|\widetilde f\|_{L_\tau(\Omega)},\quad \widetilde f\in L_\tau(\Omega),
       \ee
       where $C$ is an absolute constant.
   \end{lemma}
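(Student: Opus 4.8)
The plan is to prove the embedding by duality against $H^1_0(\Omega)$, reducing everything to a Sobolev embedding of the classical form $\|v\|_{L_{\gamma'}(\Omega)} \le C\|v\|_{H^1_0(\Omega)}$, where $\gamma'$ is the conjugate exponent to $\gamma$. Recall that, by definition \eref{H-1norm}, $\|\widetilde f\|_{H^{-1}(\Omega)} = \sup\{\langle \widetilde f, v\rangle : \|v\|_{H^1_0(\Omega)} = 1\}$. First I would observe that for $\widetilde f \in L_\gamma(\Omega)$ and $v \in H^1_0(\Omega) \cap L_{\gamma'}(\Omega)$, H\"older's inequality gives $|\langle \widetilde f, v\rangle| \le \|\widetilde f\|_{L_\gamma(\Omega)}\|v\|_{L_{\gamma'}(\Omega)}$. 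So it suffices to show $\|v\|_{L_{\gamma'}(\Omega)} \le C\|v\|_{H^1_0(\Omega)}$ for all $v\in H^1_0(\Omega)$, since then taking the supremum over $\|v\|_{H^1_0(\Omega)}=1$ yields \eref{Lqembed}.

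For $d \ge 3$: the choice $\frac1\gamma = \frac12 + \frac1d$ gives $\frac{1}{\gamma'} = 1 - \frac1\gamma = \frac12 - \frac1d$, i.e. $\gamma' = \frac{2d}{d-2}$, which is precisely the Sobolev conjugate exponent $2^* = \frac{2d}{d-2}$ of $2$ in dimension $d$. Hence the required inequality $\|v\|_{L_{2^*}(\Omega)} \le C(d)\|\nabla v\|_{L_2(\Omega)}$ is the classical Gagliardo--Nirenberg--Sobolev inequality for functions in $H^1_0(\Omega)$ (one can extend $v$ by zero to all of $\R^d$ and apply the inequality on $\R^d$, so the constant depends only on $d$). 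Combined with the H\"older step above, this proves \eref{Lqembed} with $C = C(d)$.

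For $d = 2$: the endpoint $H^1 \hookrightarrow L_\infty$ fails, but one has the (dimensionally sharp) bound $\|v\|_{L_q(\Omega)} \le C\sqrt{q}\,\|v\|_{H^1_0(\Omega)}$ for every finite $q \ge 2$, which follows from the Gagliardo--Nirenberg inequality $\|v\|_{L_q} \lesssim \|v\|_{L_2}^{2/q}\|\nabla v\|_{L_2}^{1-2/q}$ on $\R^2$ together with the Poincar\'e inequality on the bounded domain $\Omega$ (the growth of the constant as $\sqrt q$ is well known; one can also derive it from the Moser--Trudinger inequality by expanding the exponential). Given $\tau > 1$ we set $q = \tau' = \frac{\tau}{\tau-1} \ge 2$ (if $\tau = \infty$ take $q = \tau' = 1$, which is trivial since $\Omega$ has finite measure, or handle $\tau$ large directly), apply H\"older $|\langle \widetilde f, v\rangle| \le \|\widetilde f\|_{L_\tau}\|v\|_{L_{\tau'}}$, and bound $\|v\|_{L_{\tau'}(\Omega)} \le C\sqrt{\tau'}\,\|v\|_{H^1_0(\Omega)}$. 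Since $\sqrt{\tau'} = \sqrt{\tau/(\tau-1)} \le \tau/(\tau-1)$ for $\tau>1$ — indeed $\tau/(\tau-1)\ge 1$ so its square root is no larger — we obtain \eref{Lqembed-d-2} with an absolute constant $C$ (absorbing the square-root-vs-linear slack). Taking the supremum over $\|v\|_{H^1_0(\Omega)}=1$ finishes the proof.

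The only real subtlety — and the step I would be most careful about — is pinning down the correct dependence of the Sobolev constant on the exponent in the $d=2$ case: one must cite or verify the $\sqrt q$ growth rather than a worse bound, since otherwise the factor $\tau/(\tau-1)$ in \eref{Lqembed-d-2} would not be attainable. Everything else is a routine combination of H\"older's inequality with a standard Sobolev embedding, and the passage from $\R^d$ to the bounded cube $\Omega=(0,1)^d$ is handled by zero extension (for the $H^1_0$ functions) together with Poincar\'e's inequality, so all constants depend only on $d$ (or are absolute, when $d=2$).
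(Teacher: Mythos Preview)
Your proposal is correct. For $d\ge 3$ it is essentially identical to the paper's proof: H\"older's inequality plus the Sobolev embedding $H^1_0(\Omega)\hookrightarrow L_{\gamma'}(\Omega)$ with $\gamma'=2d/(d-2)$.

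For $d=2$ the two arguments diverge. The paper proves the Sobolev bound $\|v\|_{L_{\tau'}(\Omega)}\le C\tau'\|v\|_{H^1(\Omega)}$ directly, via a dyadic multiscale decomposition $v=\sum_n(v_n-v_{n-1})$ into piecewise linear interpolants on nested triangulations, estimating each layer by a scaled Bramble--Hilbert argument and summing the resulting geometric series; this yields a constant linear in $\tau'$. You instead invoke the known Gagliardo--Nirenberg (or Moser--Trudinger) bound $\|v\|_{L_q(\Omega)}\le C\sqrt{q}\,\|v\|_{H^1_0(\Omega)}$, which is sharper ($\sqrt{\tau'}$ versus $\tau'$) and shorter, at the cost of citing a less elementary inequality. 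Either route suffices for the lemma as stated, since you absorb the slack $\sqrt{\tau'}\le \tau'$ anyway. One small point: your sentence ``we set $q=\tau'\ge 2$'' only covers $1<\tau\le 2$; for $\tau>2$ one has $1<\tau'<2$, and the bound $\|v\|_{L_{\tau'}(\Omega)}\le \|v\|_{L_2(\Omega)}\le C\|v\|_{H^1_0(\Omega)}$ (using $|\Omega|=1$) handles that range trivially. This is implicit in your parenthetical but worth making explicit.
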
 
 \begin{proof} 
 Consider first the case $d \geq 3$, in which case $\gamma(d) > 1$. Let $\gamma'$ be the dual index of $\gamma$, that is, $\frac{1}{\gamma'}=\frac{1}{2}-\frac{1}{d}$, so that $\gamma' < \infty$. If $v\in U(H_0^1(\Omega))$ , then by the Sobolev embedding theorem we have
\be 
\label{Lqembed1}
\|v\|_{L_{\gamma'}(\Omega)}\lesssim \|v\|_{H^1(\Omega)}\lesssim |v|_{H_0^1(\Omega)} := \| | \nabla v | \|_{L_2(\Omega)},\quad v\in H_0^1(\Omega),
\ee
where the last inequality is Poincare's inequality.
From H\"older's inequality, it follows that for any $v\in H_0^1(\Omega)$ and any $\widetilde f\in L_\gamma(\Omega)$, we have 
\be 
\label{LLq1}
 \int_\Omega \widetilde f v \le \|\widetilde f\|_{L_\gamma(\Omega)}\|v\|_{L_{\gamma'}(\Omega)}\lesssim \|\widetilde f\|_{L_\gamma(\Omega)}|v|_{H_0^1(\Omega)}.
 \ee 
  This gives
  \be 
\label{LLq2}
 \|\widetilde f\|_{H^{-1}(\Omega)}:= \sup_{v\in U(H_0^1(\Omega))}\int_\Omega \widetilde fv     \lesssim  \|\widetilde f\|_{L_\gamma(\Omega)},\quad \widetilde f\in L_\gamma(\Omega),
 \ee 
 which proves the lemma when $d \geq 3$.

 When $d = 2$, the Sobolev embedding fails at the endpoint since $\gamma' = \infty$. However, for any $\tau > 1$ we have that $\tau' < \infty$, where $\tau'$ is the dual index to $\tau$. In this case we have a compact Sobolev embedding
 \be\label{limiting-sobolev-embedding}
    \|v\|_{L_{\tau'}(\Omega)}\le C_{\tau'}\|v\|_{H^1(\Omega)},
 \ee
 where one can show that the constant $C_{\tau'} = C\tau' = C\tau/(\tau - 1)$ for an absolute constant $C$. For completeness, we sketch this argument. Given a function $f\in H^1(\Omega)$, we consider a multiscale decomposition of $f$
 \be
    f = \sum_{n=0}^\infty (f_n - f_{n-1}),
 \ee
 where $f_n$ denotes a piecewise linear interpolation of $f$ on a triangulation $\mathcal T_n$ made of $C2^{2n}$ triangles of diameter $c2^{-n}$ (here we set $f_{-1} = 0$). We use a standard polynomial interpolation bound and scaling argument, combined with the fact that the linear polynomials are finite dimensional (essentially the Bramble-Hilbert Lemma) to see that
 \begin{equation*}
     \begin{split}
    \|f_n - f_{n-1}\|^{\tau'}_{L_{\tau'}(\Omega)} = \sum_{T\in \cT_{n-1}} \|f_n - f_{n-1}\|^{\tau'}_{L_{\tau'}(T)} &\leq C2^{2n({\tau'}/2 - 1)}\sum_{T\in \cT_{n-1}} \|f_n - f_{n-1}\|^{\tau'}_{L_2(T)}\\
    &\leq C2^{2n({\tau'}/2 - 1)}2^{-{\tau'}n}\sum_{T\in \cT_{n-1}} \|f\|^{\tau'}_{H^1(T)}.
\end{split}
\end{equation*}
 Taking ${\tau'}$-th roots, we then get using that the $\ell_{\tau'}$-norm is bounded by the $\ell_2$-norm since ${\tau'} \geq 2$ that
 \begin{equation*}
 \begin{split}
    \|f_n - f_{n-1}\|_{L_{\tau'}(\Omega)} \leq C2^{-2n/{\tau'}}\left(\sum_{T\in \cT_{n-1}} \|f\|^{\tau'}_{H^1(T)}\right)^{1/{\tau'}} &\leq C2^{-2n/{\tau'}}\left(\sum_{T\in \cT_{n-1}} \|f\|^2_{H^1(T)}\right)^{1/2}\\
    &= C2^{-2n/{\tau'}}\|f\|_{H^1(\Omega)},
\end{split}
 \end{equation*}
where $C$ is independent of ${\tau'}$. Using the multiscale decomposition, this now implies that
 $$
 \|f\|_{L_{\tau'}(\Omega)} \leq C\|f\|_{H^1(\Omega)}\sum_{n=0}^\infty 2^{-2n/{\tau'}} = \frac{C}{1 - 2^{-2/{\tau'}}}\|f\|_{H^1(\Omega)} \leq C{\tau'}\|f\|_{H^1(\Omega)}
 $$
 for a constant $C$ independent of ${\tau'}$. Utilizing \eref{limiting-sobolev-embedding}, the same duality argument used for $d\geq 3$ now completes the proof when $d = 2$.
 \end{proof}
  
     We can now prove the upper bound in the case (iv) when error is measured in $H^{-1}(\Omega)$. Let us consider first the case where $d \geq 3$. Since the exponent $\alpha_{-1}$ does not change over the $p\ge \gamma$, it is enough to consider the case when $p\le \gamma$.     Indeed, if $p>\gamma$
     then $U(B_q^s(L_p(\Omega)))$ is contained in $U(B_\infty^s(L_\gamma(\Omega)))=:U(B_\gamma^s(\Omega))$. Therefore the upper bound  follows once we have established the case $\cF= U(B_q^s(L_p(\Omega)))$ with $p\le \gamma$.  Similarly, when $p\le \gamma$, then $U(B_q^s(L_p(\Omega)))\subset U(B_\infty^s(L_p(\Omega))$. 
     Accordingly, in going further we only  need to  consider the case when $\cF= U(B_\infty^s(L_p(\Omega))=:U(B_p^s(\Omega))$ with $p\le \gamma$.

      Let $f$ be any function in $\cF:=U(B_\infty^s(L_p(\Omega))$ with $s>d$ and $p\le \gamma$. We know that $f\in C(\Omega)$.   Given the data ${\bf f}$,
     we define    $S$ as in \eref{defS2}.   From \eref{Tpwpinter},  we have
\be 
\label{gamma1}
\|  f-S\|_{L_\gamma(\Omega)}\preceq |f|_{B_p^s(I)}  2^{-kd\alpha_{-1}}. \ee

  We now use Lemma \ref{L:Lqembed} to obtain
  \be 
  \label{gamma2}
  \|f-S\|_{H^{-1}{\Omega}}\le C\|f- S\|_{L_\gamma(\Omega)}\le C m^{- \alpha_{-1}}|f|_{B_p^s(\Omega)}.
  \ee 
  The function $S$ only depends on the data.  This shows that every element in $\cF_{\rm data}(f)$ is within an $H^{-1}(\Omega)$ distance $C m^{-\alpha_{-1}}$ of $S$ and hence proves that the Chebyshev radius
  of this set   does not exceed $Cm^{-\alpha_{-1}}$.  Since $f$ was arbitrary we obtain the same bound for $R^*(\cF)$.
  This concludes
  the proof of the upper bound in case (iv) when $d \geq 3$.

  If $d = 2$, we must slightly modify the above argument. Suppose first that $p > 1$. In this case, we have
  \be 
\label{gamma1-d2}
\|  f-S\|_{L_p(\Omega)}\preceq |f|_{B_p^s(I)}  2^{-kd\alpha_{-1}}, \ee
since the sampling numbers in $L_p$ and $L_1$ are the same in this case. Thus, Lemma \ref{L:Lqembed} implies that (setting $\tau = p$)
  \be 
  \label{gamma2-d2}
  \|f-S\|_{H^{-1}{\Omega}}\le C_p\|f- S\|_{L_p(\Omega)}\le C_pm^{- \alpha_{-1}}|f|_{B_p^s(\Omega)}.
  \ee
  On the other hand, if $p \leq 1$, we see that
  \be 
\label{gamma1-2d-pleq1}
\|  f-S\|_{L_\tau(\Omega)}\preceq |f|_{B_p^s(I)}  2^{-kd(\alpha_{-1} - 1 + 1/\tau)}. \ee
Applying Lemma \ref{L:Lqembed} we get that
  \be 
  \label{gamma2-2d-pleq1}
  \|f-S\|_{H^{-1}{\Omega}}\le C_p\|f- S\|_{L_p(\Omega)}\le \frac{C}{\tau - 1}m^{- \alpha_{-1}}m^{1 - 1/\tau}|f|_{B_p^s(\Omega)}.
  \ee
  Since this is true for all $\tau > 1$ we finally optimize in $\tau$, which gives 
  \be
    \min_{\tau > 0}\frac{m^{1 - 1/\tau}}{\tau - 1} \leq C\log(m)
  \ee
  by setting $\tau = 1 + (\log(m))^{-1}$.
  \hfill $\Box$
\vskip .1in

 \noindent
 {\bf Proof of the lower bounds in Theorem \ref{T:ORf}.}
 We shall now prove the lower bounds in Theorem \ref{T:ORf}. 
 The proofs of lower bounds all take the following form. Suppose that $\bx_i$, $i=1,\dots, m$, are any $m$ data sites. In order to prove a lower bound for $R^*(\cF)_X$, we construct a function $\eta$ in $\cF$ so that
 \vskip .1in
 \noindent 
 (a) $\eta(\bx_i)=0,\quad i=1,\dots,m$,
 \vskip .1in
 \noindent 
 (b) $\|\eta\|_X\ge cm^{-\alpha}$,  
 
 \vskip .1in
 \noindent
 where $\alpha$ is the appropriate index for $X$.
 Since both $\eta$ and the zero function  satisfy zero data the bound (b) gives the lower bound we want for $R^*(\cF)_X$.
 
 We proceed to explain how to construct an appropriate function $\eta$ for each of the    choices of $X$.   Let $\varphi$ be a smooth non-negative function on $\R^d$ which   vanishes outside
 $\Omega$ and additionally satisfies
 \be
 \label{phiinfinity}
  \|\varphi\|_{L_\infty(\R^d)}=1 \quad {\rm and}\quad \varphi(x)\ge 1/2, \quad x\in\Omega_0,
 \ee 
  where $\Omega_0:=[1/4,3/4]^d$. Of course, there are many such functions $\varphi$ but to  be more specific, and for
  use further, we assume $\varphi(\bx)=\phi(x_1)\cdots \phi(x_d) $ where $\phi$ is a univariate function with these properties (for $d=1$).

  We choose    $\varphi$ with these properties which has the smallest norm
    \be 
    \label{phinorm}
    M_{s,p,q}:=\|\varphi\|_{B_q^s(L_p(\Omega))}. 
    \ee
   So, $\varphi$  depends on $s,p,q$ and $M_{s,p,q}$ is a fixed constant since the parameters $s,p,q$ are fixed.
   
 Now consider any cube  $I\subset\Omega$ and let  $\xi_I$ be the smallest vertex  of $I$ and as usual $\ell_I$ is its side length.  We   define the function
 \be
 \label{dilate}
\varphi_I:= \varphi_{I,s,p,q}(x):= \ell_I^{ s -\frac{d}{p}} \varphi(\ell_I^{-1}(x-\xi_I)),\quad x\in\R^d.
 \ee 
This function vanishes outside $I$ and on its boundary.   Moreover, one easily checks  that for all cubes $I$, we have
 \be
 \label{dilate1}
 \|\varphi_I\|_{B_q^s(L_p(\Omega))}= M_{s,p,q}.
 \ee 
 Here the   norm is independent of $I$.

Because of the monotonicity of $R_m^*(\cF)_X$, it is enough to prove the lower bound when $m=2^{kd}$ as we now assume for $m$. Suppose that $m$ sample points $X_m := \{\bx_1,...,\bx_m\}\in \Omega$ are given. Consider the regular tensor product grid $G_{k+2, 2}$ with spacing $ 2^{-(k+2)}$. The number of cubes in this grid is   equal to  $4^dm$ by construction and thus at least $(4^d-1)m$ cubes do not contain any sample points in its interior. Denote by $\cI:=\mathcal{I}(X_m)$ the set of cubes which do not contain any sample points, so that $|\cI| \geq (4^d-1)m = C_dm$ and the volume of each cube $I\in \cI$ is $|I| \geq c_dm^{-1}$.
 For any cube $I\in \cI$, we define
 \be 
 \label{eta}
   \eta_I:= M^{-1}\varphi_I,\quad M=M_{s,p,q}.
  \ee
  It follows that each $\eta_I$ is in $  \cF$.  Also, each of the $\eta_I$ vanishes at each of the data sites.  We shall use the $\eta_I$ to prove the lower bounds of Theorem \ref{T:ORf}.

\vskip .1in
\noindent 
{\bf The lower bound in case (i):} Let us fix $s,p,q$ and
   consider the functions  $ \eta_I$ defined by \eref{eta}.  We take $I$ as any  fixed cube from $\cI$ and define $\eta:=\eta_I$. Then $\eta$ vanishes at each of the data sites. Also, it follows from the definition of $\eta$ that with $M=M_{s,p,q}$, we have
$$
\|\eta\|_{L_\infty(\Omega)}= M^{-1}\|\varphi_I\|_{L_\infty(\Omega)} =M^{-1} \ell_I^{ s -\frac{d}{p}}\ge c m^{-(\frac{s}{d} -\frac{1}{p})}=cm^{-\alpha_C}.
$$
This gives that $R^*(\cF)_{C(\Omega)} \ge cm^{-\alpha_C}$ and thereby proves the lower bound in case (i).
 \vskip .1in
\noindent 
{\bf The lower bound in case (ii):}  
 We   consider separately the cases $\tau < p$ and $\tau \geq p$. If $\tau \geq p$, then we take $\eta = \eta_I$  where $I$ is any single fixed cube in $\cI$. For $M=M_{s,p,q}$, we have from \eref{phiinfinity} that
$$
\|\eta\|_{L_\tau(\Omega)}= M^{-1}\|\varphi_I\|_{L_\tau(\Omega)} \ge c M^{-1} \ell_I^{ s -\frac{d}{p}+\frac{d}{\tau}}\ge c m^{-(\frac{s}{d} -\frac{1}{p}+\frac{1}{\tau})}=cm^{-\alpha_\tau},
$$
since $1/p - 1/\tau \geq 0$. 
 It follows that $R^*(\cF)_{L_\tau(\Omega)} \ge cm^{-\alpha_\tau}$  in the case $\tau\ge p$.

 Next consider the case  $\tau < p<\infty$.  
  Since $U(B_q^s(L_\infty(\Omega)))\subset U(B_q^s(L_p(\Omega)))$, it is sufficient to prove the lower bound in the case $p=\infty$, i.e., when $\cF=U(B_q^s(L_\infty(\Omega)))$ with $0<q<\infty$ arbitrary but fixed.   We define 
 \be\label{neweta1}
    \eta =  \kappa\sum_{I\in \cI} \eta_I,
 \ee
 with $\kappa$ a fixed constant   chosen so that
 $\eta\in U(B_q^s(L_\infty(\Omega)))$. Here, $\eta_I$ is defined as in \eref{eta} with $p=\infty$. We next derive a lower bound for $\kappa$.
 The terms $\eta_I$ in \eref{neweta1} each have $L_\infty$ norm bounded by $M2^{-ks}$ and they have disjoint supports.  Therefore, $\|\eta\|_{L_\infty(\Omega)}\le M2^{-ks}$.
 For each $r\ge 1$, these functions are in $C^{r}(\Omega)$ with $C^r(\Omega)$ norms not exceeding $M2^{-ks}2^{kr}$ where $M$ depends only on the choice of $r$. We take $r:=\lceil s\rceil +1$ and find   that the modulus of smoothness of $\eta$ satisfies
 \be 
 \label{etamo}
 \omega_r(\eta,t)_{\infty}\le M \kappa 2^{-ks}\min(1,2^{kr}t^r) .
 \ee
Therefore, breaking the integral into the integral over $[0,2^{-k}]$ and $(2^{-k},1]$ we find that 
\be 
\label{etaq}
\int_0^1 [t^{-s}\omega_r(\eta,t)_\infty]^q\frac{dt}{t}\le \kappa^q  M^q2^{-kq(s-r)}\int_0^{2^{-k}}  t^{(r-s)q}\frac  {dt}{t}+ \kappa^q M^q2^{-kqs}\int_{2^{-k}}^1 t^{-sq}  \frac {dt}{t}\le \kappa^qC^q,
\ee
with $C$ a fixed constant.
 This shows that we can take $\kappa\ge c$ where $c$ depends only on $s$.  Since $\eta\ge \kappa 2^{-ks}$
 on a set of measure $\ge 1/2$, we have that $\|\eta\|_{L_\tau(\Omega)}\ge c2^{-ks}\ge cm^{-s/d}$ which finishes the proof of the lower bound in case (ii). 
 \vskip .1in
\noindent 
{\bf The lower bound in case (iii):}  By assumption we have $p \leq 2$.  We take $\eta = \eta_I$ for any fixed $I\in \cI$. We estimate
\be
\|\eta\|_{H^1(\Omega)}= M^{-1}\|\varphi_I\|_{H^1(\Omega)} \ge c M^{-1} \ell_I^{s - 1 - \frac{d}{p}+\frac{d}{2}} \geq cm^{-(\frac{s}{d}-\frac{1}{d}-\frac{1}{p}+\frac{1}{2})},
\ee
which proves the lower bound in (iii) for  $R^*(\cF)_{H^1(\Omega)}$.

 \vskip .1in
\noindent 
{\bf The lower bound in case (iv):} We consider first the case where $p \leq \gamma$, where $\gamma$ is given by \eref{TF4}. In this case, we choose $\eta=\eta_I$
where $I$ is any cube in $\cI$. The function $\eta$ vanishes at all of the data sites. By construction, we have that $\eta\in\cF$ and the lower bound
\be 
\label{lbg}
\eta(x)\ge cm^{-\frac{s}{d}+\frac{1}{p}}, \quad x\in I_0,
\ee 
where $I_0=[\xi_I-\ell_I/4,\xi_I+1/4]^d$ is the subcube corresponding to $\Omega_0$ in $I$ (see \eref{phiinfinity}).

We want to bound the $H^{-1}(\Omega)$ norm of $\eta$ from below.  For this,
we choose a function $v$ from the unit ball of $H_0^1(\Omega)$
which  vanishes outside of $I$ and is large on $I_0$.  Namely, we take
\be 
\label{defv1}
v(x):= c\ell_I^{ 1 -\frac{d}{2}} \varphi(\ell_I^{-1}(x-\xi_I)).
\ee 
where now $c$ is chosen as a constant depending only on $d$ so that $\|v\|_{H_0^1(\Omega)}=1$.
As in the case for $\eta_I$, we have
\be 
\label{choosev}
v(x)\ge c\ell_I^{ 1 -\frac{d}{2}} \geq cm^{-\frac{1}{d}+\frac{1}{2}}, \quad x\in I_0.
\ee 
 It follows that
 \be 
 \label{H-1g}
 \|\eta\|_{H^{-1}(\Omega)}\ge \int_\Omega \eta(x)v(x)\, dx= \int_{I_0}\eta(x)v(x)\, dx\ge 
 cm^{-\frac{s}{d}+\frac{1}{p}}m^{-\frac{1}{d}+\frac{1}{2}}|I_0|.
 \ee 
 Since $|I_0|\ge cm^{-1}$ the right side of \eref{H-1g} is 
 $$
 cm^{-\frac{s}{d}+\frac{1}{p} -\frac{1}{d}-\frac{1}{2}}=cm^{-\frac{s}{d}+\frac{1}{p}-\frac{1}{\gamma}}.
 $$
 This proves that $R^*(\cF)_{H^{-1}(\Omega)} \ge cm^{-\alpha_{-1}}$ in the case $p\leq \gamma$ since $1/p-1/\gamma \geq 0$.

We finally consider the case when $p > \gamma$.  This is handled in a similar way to   case (ii).   Namely,  we can assume $\cF=U(B_q^s(L_\infty(\Omega)))$ where $s,q$ are fixed.  We take $\eta$ as in \eref{neweta1}.  We know this function is in $\cF$ and vanishes at all of the data sites.   To provide a lower bound the $H^{-1}(\Omega)$ norm of $\eta$,
\be
    v(x) = c\varphi(x),
\ee
where $c$ is a constant chosen so that $\|v\|_{H_0^1(\Omega)} = 1$. By construction, we have $v(x) \geq 0$ for all $x\in \Omega$ and $v(x) \geq c$ for $x\in \Omega_0$.  Let   $\cI_0$ be the set of $I\in\cI$ such that
 $I\subset \Omega_0$. Since $\eta$ is also non-negative, we estimate
\be\label{eta-h-1-lower-bound}
    \|\eta\|_{H^{-1}(\Omega)} \geq \int_{\Omega} \eta(x)v(x)dx \geq \int_{\Omega_0}\eta(x)v(x)dx \geq \int_{\Omega_0} \eta(x)dx\ge \kappa \sum_{I\in \cI_0}\int_I\eta_I\ge c2^{-ks} 2^{-kd} \#(\cI_0), 
\ee
 because $\kappa\ge c$ and $\eta_I\ge 2^{-ks}$
 on $I_0$.
We need to estimate the cardinality of the set $\cI_0$. Observe that the number of cubes $I$ in the original grid which are contained in $\Omega_0$ is $2^{-d}(4^dm) = 2^dm$. Since $\#(\cI) \geq (4^d - 1)m$, we see that
$$
    \#(\cI_0) \geq 2^dm - m = (2^d - 1)m.
$$
Since $m=2^{kd}$, placing this lower bound of $\#(\cI_0)$ into \eqref{eta-h-1-lower-bound}, we find that
\be
    \|\eta\|_{H^{-1}(\Omega)}  \geq cm^{-\frac{s}{d}}.
\ee
This implies that $R^*(\cF)_{H^{-1}(\Omega)} \ge cm^{-\alpha_{-1}}$ in the case $p > \gamma$ since $1/p-1/\gamma < 0$ and therefore completes the proof of the lower bound in (iv). This completes the proof of Theorem \ref{T:ORf}.\hfill $\Box$

\subsubsection{The proof of Theorem \ref{T:ORg}}
\label{T3.2}

 We first prove (ii) which also gives the upper bound in \eref{ORGrate}.  From the definition of the trace norm, we have
\be 
\label{org1}
\|g-\overline S_k(g)\|_{H^{1/2}(\partial\Omega)}\le \|v-S_k\|_{H^1(\Omega)}\le C [r2^k]^{-t+1}\le Cm^{-\beta},
\ee 
where the next to last inequality uses the estimate \eref{projapprox3} and also the fact that $m\asymp 2^{k(d-1)}$.  This proves (ii).

It follows that   
\be 
\label{org2}
R_m(g)_{H^{1/2}(\partial\Omega)}\le   Cm^{-\beta},
\ee 
for the values of $m$ that equal $2d[(r-1)2^k]^{d-1}$.  Since $g\in\cG$ was arbitrary, we obtain
\be 
\label{org3}
R^*_m(\cG)_{H^{1/2}(\partial\Omega)}\le    Cm^{-\beta},
\ee
for the above values of $m$.  From the monotonicity of $R_m^*$ we obtain \eref{org3} for all $m$.
Thus, we have proven the upper bound in \eref{ORGrate}.  

 We next prove the lower inequalities in \eref{ORGrate}.  From the monotonicity of $R_m^*(\cG)_{H^{1/2}(\partial\Omega)}$, it is enough to prove this lower bound for $m=2^{k(d-1)}-1$ whenever $k$ is any non-negative integer. The following  reasoning is the same as in the proof of the lower inequalities in Theorem \ref{T:ORf}. 

Let $\cZ:=\{\bz_1,\dots,\bz_m\}$, $m=2^{k(d-1)}-1$, be any proposed set of data sites on $\partial\Omega$.  Let $F:=\{\bx\in\Omega:\ \bx\cdot e_1=0$, $e_1=(1,0,\dots,0)\in\R^d$, be the face of $\partial \Omega$ corresponding to  the  points $\bx\in \overline\Omega$ whose first coordinate is equal to zero.   Consider the set $\cD_k(F)$ of ($d-1$ dimensional) dyadic cubes  of $F$. Since there are $2^{k(d-1)}$ cubes in
$\cD_k(F)$, it follows that there is a $\overline J\in\cD_k(F)$ such that $\overline J$ contains none of the data sites from $\cZ$ in its interior.  We will now construct an appropriate function $\eta\in \cG$ which vanishes at
each of the data sites.

Let $J$ be the $d$ dimensional cube in $\cD_k(\overline \Omega)$ which has $\overline J$ as a face and let  $\varphi_J$ be the function defined in \eref{dilate} with the parameters $\bar s, d, \bar p,\bar q$.  We know that
$M^{-1}\varphi_J\in U(B_{\bar q}^{\bar s}(L_{\bar p}(\Omega))$ when $M:=M_{\bar s,\bar p,\bar q}$.  
If  $\xi_J'$ is the 
center of  $J$, then
 the point $\xi_J'-(2^{-k-1}, 0,\dots,0)$ is the center of $\overline J$.
 
We now define
\be
v(\bx) := M^{-1} \varphi_J(\bx-(2^{-k-1}, 0,\dots,0)  ),\quad x\in\Omega,
\ee
which is also a function in $U(B_{\bar q}^{\bar s}(L_{\bar p}(\Omega))$.  Hence, the function 
\be 
\label{defetag}
\eta:=T_{\partial\Omega}v
\ee
is in our model class $\cG$ and $\eta$ vanishes at all of the data sites $\cZ$.

We now show that for the intrinsic $H^{1/2}(\partial\Omega)$ norm we have
\be 
\label{lbeta1/2}
\|\eta\|_{H^{1/2}(\partial\Omega)}\ge cm^{-\beta},
\ee 
with $c$ not depending on $m$.  This will prove the lower bound we seek. 
First consider the function $\varphi$ defined in \eref{phiinfinity}. Let $e_1=(1,0,\dots,0)\in\R^d$. Because of the tensor product structure of $\varphi$ described after \eqref{phiinfinity}, the trace $\eta_0$ of $\varphi$ onto the hyperplane $\bx\cdot e_1=1/2$ is the $d-1$ version of $\varphi$. We define
\be 
\label{tn}
\overline M:=|\eta_0|_{H^{1/2}(\partial\Omega)}>0.
\ee 
By a change of variables and the fact that $m=2^{k(d-1)}-1$, it follows that

\be
    |\eta|_{H^{1/2}(\partial\Omega)} = \overline M M^{-1} 2^{-k(\bar s-d/{\bar p})}2^{k/2} 2^{-k(d-1)/2} \ge c m^{-\beta},
\ee
  where $c>0$ does not depend on $m$.  This proves the lower bound and completes the proof of the theorem.
  \hfill $\Box$

\subsection{The proof of Theorem \ref{T:dloss} in the case $d=2$}
\label{d2}
 In this section we provide the proof of Theorem \ref{T:dloss} in when   $d=2$. We discuss two cases.
 
 {\bf Case 1:} $p=1$
 
 Let $\varepsilon>0$. According to Lemma \ref{L:Lqembed}, Lemma \ref{L:Ldiscrete} and Theorem \ref{T:dH121}, we have
 \begin{eqnarray*} 
   \|u-v\|_{H^1(\Omega)}&\lesssim& \|f+\Delta v\|_{H^{-1}(\Omega)} +\|g-Tr(v)\|_{H^{1/2}(\partial\Omega)}
  \nonumber\\
 &\lesssim &\varepsilon^{-1}\|f+\Delta v\|_{L_{1+\varepsilon}(\Omega)}+
 \|g-Tr(v)\|_{H^{1/2}(\partial\Omega)}\nonumber\\
 &\lesssim& \varepsilon^{-1}\|f+\Delta v\|^*_{L_{1+\varepsilon}(\Omega)}+
 \|g-Tr(v)\|^*_{H^{1/2}(\Omega)}\\ \nonumber
 &+&
 \left[\|f+\Delta v\|_{\cB} \varepsilon^{-1}\widetilde m^{1-1/(1+\varepsilon)}\widetilde  m^{-\frac{s}{2}}+\|g-Tr(v)\|_{Tr(\overline \cB)} \overline m^{-(\bar s-1)}\right].
 \end{eqnarray*}
 Optimizing $\varepsilon$ as in Lemma \ref{L:Lqembed}
 gives a choice of $\varepsilon=[\log(\widetilde m)]^{-1}$, for which 
 $$
 \varepsilon^{-1}\widetilde m^{1-1/(1+\varepsilon)}\lesssim \log(\widetilde m)
 $$
 and thus we have
 \begin{eqnarray*} 
   \|u-v\|_{H^1(\Omega)}
 &\lesssim& \log (\widetilde m)\|f+\Delta v\|^*_{L_{1+\varepsilon}(\Omega)}+
 \|g-Tr(v)\|^*_{H^{1/2}(\Omega)}\\ \nonumber
 &+&
 \left[\|f+\Delta v\|_{\cB} \log (\widetilde m)\widetilde  m^{-\frac{s}{2}}+\|g-Tr(v)\|_{Tr(\overline \cB)} \overline m^{-(\bar s-1)}\right]\\ \nonumber
 &\lesssim& \cL^*(v)+\left [1+\| v\|_{\cU}\right] \cR_{\cU}(\widetilde{m}, \overline{m}).
 \end{eqnarray*}

{\bf Case 2:} $1<p\leq \infty$
 
 \begin{eqnarray*} 
   \|u-v\|_{H^1(\Omega)}&\lesssim& \|f+\Delta v\|_{H^{-1}(\Omega)} +\|g-Tr(v)\|_{H^{1/2}(\partial\Omega)}
  \nonumber\\
 &\lesssim &C(p)\|f+\Delta v\|_{L_{p}(\Omega)}+
 \|g-Tr(v)\|_{H^{1/2}(\partial\Omega)}\nonumber\\
 &\lesssim& \left [C(p)\|f+\Delta v\|^*_{L_p(\Omega)}+
 \|g-Tr(v)\|^*_{H^{1/2}(\Omega)}\right ].
 \\ \nonumber
 &+&
 \left[C(p)\|f+\Delta v\|_{\cB} \widetilde  m^{-\frac{s}{2}}+
 \|g-Tr(v)\|_{Tr(\overline \cB)} \overline m^{-(\bar s-1)}\right]\\ \nonumber 
 &\lesssim& \left [C(p)\|f+\Delta v\|^*_{L_p(\Omega)}+
 \|g-Tr(v)\|^*_{H^{1/2}(\Omega)}\right ]+\left [1+\| v\|_{\cU}\right] \cR_{\cU}(\widetilde{m}, \overline{m}).
 \end{eqnarray*}
Notice that in this case we could have chosen $\cL^*$ as in the case of $d=3$ with $\gamma=p$ and obtained the  optimal recovery rate for this class. However, this choice would result in a loss functional depending on $\cal F$.

\bibliography{biblio}
\bibliographystyle{amsplain} 
\vskip .1in
\noindent
Andrea Bonito, Department of Mathematics, Texas A\&M University, College Station, TX 77843, email: bonito@tamu.edu.
\vskip .1in
\noindent
Ronald DeVore, Department of Mathematics, Texas A\&M University, College Station, TX 77843, email: rdevore@tamu.edu.
\vskip .1in
\noindent
Guergana Petrova, Department of Mathematics, Texas A\&M University, College Station, TX 77843, email: gpetrova@tamu.edu.
\vskip .1in
\noindent
Jonathan W. Siegel, Department of Mathematics, Texas A\&M University, College Station, TX 77843, email: jwsiegel@tamu.edu.

\end{document}